\definecolor{darkgreen}{rgb}{0.1,0.6,0.1}
\newcommand{\q}{\mathbf{q}}
\newcommand{\p}{\mathbf{p}}
\newcommand{\f}{\mathbf{f}}
\newcommand{\g}{\mathbf{g}}
\renewcommand{\v}{\mathbf{v}}
\newcommand{\AAA}{\mathbf{A}}
\newcommand{\err}{\mathcal{r}}
\newcommand{\normal}{\mathbf{n}} 
\newcommand{\halb}{\frac{1}{2}}
\newtheorem{theorem}{Theorem}
\title{On thermodynamically compatible finite volume schemes for continuum mechanics}
\begin{document}

\begin{center}
	\textbf{ \Large{On thermodynamically compatible finite volume schemes for continuum mechanics} }
	
	\vspace{0.5cm}
	{S. Busto\footnote{saray.busto@uvigo.es}, M. Dumbser\footnote{michael.dumbser@unitn.it}, I. Peshkov\footnote{ilya.peshkov@unitn.it}, E. Romenski\footnote{evrom@math.nsc.ru}}
	
	\vspace{0.2cm}
	{\small
		\textit{$^{(1)}$ Department of Applied Mathematics I, Universidade de Vigo, Campus As Lagoas, 36310 Vigo, Spain}
		
		\textit{$^{(2,3)}$ Department of Civil, Environmental and Mechanical Engineering, University of Trento, Via Mesiano 77, 38123 Trento, Italy }
		
		\textit{$^{(4)}$ Sobolev Institute of Mathematics, 4 Acad. Koptyug Avenue, 630090 Novosibirsk, Russia}
	}
\end{center}

% % % % % % % % % % % % % % % % % % % % % % % % % % % % % %
%                   Abstract                              %
% % % % % % % % % % % % % % % % % % % % % % % % % % % % % %
\hrule
\vspace{0.4cm}

\begin{center}
	\textbf{Abstract}
\end{center} 

\vspace{0.1cm}

In this paper we present a new family of semi-discrete and fully-discrete finite volume schemes for overdetermined, hyperbolic and thermodynamically compatible PDE systems. In the following we will denote these methods as HTC schemes. 
In particular, we consider the Euler equations of compressible gasdynamics, as well as the more complex  Godunov-Peshkov-Romenski (GPR) model of continuum mechanics, which, at the aid of suitable relaxation source terms, is able to describe nonlinear elasto-plastic solids at large deformations as well as viscous fluids as two special cases of a more general first order hyperbolic model of continuum mechanics.
 The main novelty of the schemes presented in this paper lies in the fact that we solve the \textit{entropy inequality} as a primary evolution equation rather than the usual total energy conservation law. Instead, total energy conservation is achieved as a mere consequence of a thermodynamically compatible discretization of all the other equations. For this, we first construct a discrete framework for the compressible Euler equations that mimics the continuous framework of Godunov's seminal paper \textit{An interesting class of quasilinear systems} of 1961 \textit{exactly} at the discrete level. All other terms in the governing equations of the more general GPR model, including non-conservative products, are judiciously discretized in order to achieve discrete thermodynamic compatibility, with the exact conservation of total energy density as a direct consequence of all the other equations. As a result, 
the HTC schemes proposed in this paper are provably marginally stable in the energy norm and satisfy a discrete entropy inequality by construction. 
We show some computational results obtained with HTC schemes in one and two space dimensions,  considering both the fluid limit as well as the solid limit of the governing partial differential equations.

% % % % % % % % % % % % % % % % % % % % % % % % % % % % % %
%                   Keywords                              %
% % % % % % % % % % % % % % % % % % % % % % % % % % % % % %
\vspace{0.2cm}
\noindent \textit{Keywords:} 
	thermodynamically compatible finite volume schemes; 
	semi-discrete and fully-discrete Godunov formalism; 
	vanishing viscosity limit; 
	entropy inequality; 
	hyperbolic thermodynamically compatible PDE systems;
	overdetermined hyperbolic PDE systems;  
	unified GPR model for solid mechanics and fluid mechanics.	

\vspace{0.4cm}

\hrule

%  % See http://www.ams.org/msc/  
%  % Partial differential equations
%  35L40, % First-order hyperbolic systems  
%  % Numerical analysis
%  65M08. % Finite volume methods for initial value and initial-boundary value problems involving PDEs
%  % Relativity and gravitational theory
%  % Astronomy and astrophysics

\section{Introduction}

In his groundbreaking work \textit{An interesting class of quasilinear systems} \cite{God1961}   
published 60 years ago in 1961 Godunov discovered the connection between symmetric hyperbolicity in 
the sense of Friedrichs \cite{FriedrichsSymm} and thermodynamic compatibility, 10 years before the 
work of Friedrichs \& Lax on the same subject \cite{FriedrichsLax}. In subsequent work by Godunov 
\& Romenski and collaborators, the theory of symmetric hyperbolic and thermodynamic compatible 
(SHTC) systems was extended to a wide class of mathematical models in continuum physics, 
ranging from the magnetohydrodynamics (MHD) equations over nonlinear hyperelasticity to 
compressible multi-phase flows and relativistic gasdynamics, see e.g. 
\cite{God1972MHD,GodunovRomenski72,Godunov:1995a,GodRom2003,Rom1998,RomenskiTwoPhase2010,Godunov2012,GRGPR}.
 All SHTC systems can be rigorously derived from an underlying variational principle. A connection 
between SHTC systems and Hamiltonian mechanics was established in \cite{SHTC-GENERIC-CMAT}, 
emphasizing a peculiar role of the energy potential (Hamiltonian), while an extension to continuum 
mechanics with torsion was provided in \cite{PRD-Torsion2018}. 

Notwithstanding the mathematical elegance of the SHTC framework, to the best knowledge of the 
authors it was up to now never directly carried over to the discrete level. Most existing papers on 
thermodynamically compatible schemes are based on the ideas of the seminal work of Tadmor 
\cite{Tadmor1}, in which a discrete compatibility with the \textit{entropy equation} is sought, 
rather than a discrete compatibility with the \textit{total energy} conservation, as suggested by the 
SHTC framework. A fully discrete entropy-stable scheme has been recently forwarded in 
\cite{Ranocha2020}, while the convergence of entropy-stable schemes was proven in 
\cite{Chatterjee2020}.  For high order entropy-compatible schemes the reader is referred to 
\cite{GassnerSWE,ShuEntropyMHD1,ShuEntropyMHD2,GassnerEntropyGLM,Hennemann2021} and references 
therein. 
In \cite{Fjordholm2012,AbgrallBT2018} entropy compatible schemes were applied to non-conservative hyperbolic equations. Last, but not least, we also would like to mention the general framework for the construction of numerical methods that satisfy additional extra conservation laws recently introduced by Abgrall in \cite{Abgrall2018}. A first attempt to achieve discrete energy conservation as a consequence of all other equations was made in \cite{SWETurbulence} for a novel hyperbolic model of unsteady turbulent shallow water flows.  
\textcolor{black}{For compatible schemes in the context of Lagrangian hydrodynamics, where total energy conservation is obtained as a consequence of the discrete mass, momentum and internal energy equations, see the interesting papers \cite{CaramanaCompatible1,CaramanaCompatible2}, while a fully-discrete compatible kinetic energy preserving scheme was forwarded in \cite{Compatiblekin}. However, all aforementioned schemes address only the compressible Euler equations and not the full GPR model of continuum mechanics. } 

\textcolor{black}{The main contribution of this paper is thus a new thermodynamically compatible finite volume scheme for the GPR model of continuum mechanics \cite{Rom1998,PeshRom2014,GPRmodel} in which the discrete energy conservation law is obtained as a consequence of a compatible discretization of all the 
other equations. To the best knowledge of the authors, this is the first time that such a provably 
thermodynamically compatible scheme is proposed for the PDE system  \eqref{eqn.GPR}, which is able 
to describe solid mechanics and fluid mechanics at the same time. We stress that the main objective 
of this paper is \textit{not} to introduce a better or more efficient numerical scheme compared to 
existing methods, but to introduce a \textit{radically new concept}: direct discretization of the 
entropy inequality in order to obtain the discrete total energy conservation law as a consequence. 
We also would like to clearly indicate the three main shortcomings of the new method introduced in 
this paper:
	\begin{itemize}
		\item[i)] the numerical fluxes are only known \textit{implicitly} via path integrals of the physical flux in phase space; however, also other numerical methods are based on path integrals, like the Osher-Solomon flux \cite{osherandsolomon}, the entropy-consistent scheme of Tadmor \cite{Tadmor1} and the family of path-conservative schemes of Castro and Par\'es \cite{Castro2006,Pares2006}; 		
		\item[ii)] currently, in our new framework, a numerical scheme that provably satisfies total energy conservation at the fully-discrete level can only be achieved at the aid of a special implicit time integrator;
	    \item[iii)] in the case of the semi-discrete scheme, total energy conservation is in general lost at the fully-discrete level once a standard, nonsymplectic Runge-Kutta time discretization is employed. 
    \end{itemize}
}
The rest of this paper is organized as follows. In Section \ref{sec.model} we present the unified first order hyperbolic model of continuum mechanics (GPR model) under consideration. In Sections  \ref{sec.scheme.1d} and  \ref{sec.scheme.fd} the construction of thermodynamically compatible semi-discrete and fully-discrete finite volume schemes is explained for the one-dimensional case. In Section \ref{sec.scheme.2d} an extension to the general multi-dimensional case is presented, together with a  proof of nonlinear stability in the energy norm and a proof of the entropy inequality satisfied by the scheme. Numerical results are shown in Section \ref{sec.results} for the fluid and  the solid limits of the governing PDE system. The paper closes with some concluding remarks and an outlook to future work in Section \ref{sec.Conclusions}. 
 
%\clearpage

\section{Mathematical model and its structure}
\label{sec.model}

We consider the following first order hyperbolic model of continuum mechanics regularized with 
vanishing viscosity terms and which goes back to the work of Godunov 
\cite{God1961}, Godunov \& Romenski \cite{GodunovRomenski72,Rom1998, GodRom2003} and Peshkov \& 
Romenski, see \cite{PeshRom2014,GPRmodel}:   
\begin{subequations}\label{eqn.GPR}
\begin{eqnarray}
	&& \,\, \qquad \frac{\partial \rho}{\partial t}+\frac{\partial (\rho v_k)}{\partial x_k} 
	\textcolor{blue}{- \frac{\partial}{\partial x_m} \!\! \left( \epsilon \frac{\partial \rho}{\partial x_m} \right)} = 0, \label{eqn.conti} \\ 
	&& \,\, \qquad \frac{\partial \rho v_i}{\partial t}+\frac{\partial \left(\rho v_i v_k + p \, \delta_{ik}  
		\textcolor{red}{\, + \, \sigma_{ik} + \omega_{ik}} \right)}{\partial x_k}  
	\textcolor{blue}{-\frac{\partial}{\partial x_m} \!\! \left( \epsilon \frac{\partial \rho v_i}{\partial x_m} \right)} = 0,  \label{eqn.momentum} \\ 
	&& \,\, \qquad	\frac{\partial \rho S}{\partial t}+\frac{\partial \left( \rho S v_k  \textcolor{red}{\, + \, \beta_k}  
	\right)}{\partial x_k}  
\textcolor{blue}{ - \frac{\partial}{\partial x_m} \!\! \left( \epsilon \frac{\partial \rho S}{\partial x_m} \right) } = \textcolor{blue}{\Pi} \,   \textcolor{darkgreen}{+ \dfrac{\alpha_{ik} \alpha_{ik} }{\theta_1(\tau_1) T}  + 
	\dfrac{\beta_i \beta_i}{\theta_2(\tau_2) T} } \geq 0,   \label{eqn.entropy} \\		
	&& \,\, \qquad \textcolor{red}{\frac{\partial A_{i k}}{\partial t}+\frac{\partial (A_{im} v_m)}{\partial x_k} + 
	v_m \left(\frac{\partial A_{ik}}{\partial x_m}-\frac{\partial A_{im}}{\partial x_k}\right) }  \textcolor{blue}{-\frac{\partial}{\partial x_m} \!\! \left( \epsilon \frac{\partial A_{ik}}{\partial x_m} \right) }
	=  \textcolor{darkgreen}{-\dfrac{ \alpha_{ik} }{\theta_1(\tau_1)} },\label{eqn.deformation} \\
	&& \,\, \qquad \textcolor{red}{\frac{\partial J_k}{\partial t}+\frac{\partial \left( J_m v_m + T \right)}{\partial x_k} + 
	v_m \left(\frac{\partial J_{k}}{\partial x_m}-\frac{\partial J_{m}}{\partial x_k}\right) }  \textcolor{blue}{-\frac{\partial}{\partial x_m} \!\! \left( \epsilon \frac{\partial J_{k}}{\partial x_m} \right) } = 	 
	\textcolor{darkgreen}{-\dfrac{\beta_k}{\theta_2(\tau_2)}}, \label{eqn.heatflux} \\ 
	&& \,\, \qquad \frac{\partial \mathcal{E}}{\partial t}+\frac{\partial \left( v_k \left( \mathcal{E}_1 \!+\! \mathcal{E}_2 \textcolor{red}{+ \mathcal{E}_3 + \mathcal{E}_4}  \right) + v_i \, (p \, \delta_{ik} 
		 \textcolor{red}{+ \sigma_{ik}+\omega_{ik}})  \textcolor{red}{ +  h_k} \right)}{\partial x_k}  
	 \textcolor{blue}{-\frac{\partial}{\partial x_m} \!\! \left( \epsilon \frac{\partial 
	 \mathcal{E}}{\partial x_m} \right)} = 0. \label{eqn.energy} 
\end{eqnarray}
\end{subequations}
In the overdetermined system above $\mathbf{q}=\{q_i\} = (\rho, \rho v_i, \rho S, 
\textcolor{red}{A_{ik}}, \textcolor{red}{J_k})^T$ denotes the state vector, the total energy 
potential is $\mathcal{E} = \rho E =\mathcal{E}_1  +\mathcal{E}_2 \textcolor{red}{\, + \, 
\mathcal{E}_3 + \mathcal{E}_4} $ with $\mathcal{E}_i = \rho E_i$, $\epsilon >0$ is a vanishing 
viscosity and the nonnegative entropy production term due to the viscous terms is given by 
\begin{equation}
   \textcolor{blue}{\Pi = \frac{\epsilon}{T} \, \partial_{x_m} q_i \,\, \partial^2_{q_i q_j} 
   \mathcal{E} \,  \partial_{x_m} q_j \geq 0}, 
\end{equation}
since $\epsilon>0$ and we assume that the temperature $T > 0$ and that the Hessian of the total 
energy potential is at least positive semi-definite, $\mathcal{H}_{ij}:=\partial^2_{q_i q_j} \mathcal{E} \geq 0$. 
\textcolor{black}{Throughout this paper, we use the notations $\partial_p = \partial / \partial 
p$ 
and $\partial^2_{pq} = \partial^2 / (\partial p \partial q)$ for the first and second partial derivatives w.r.t. generic coordinates or quantities $p$ and $q$, which may also be vectors or components of a vector. Furthermore, 
we make use of the Einstein summation convention over repeated indices. Last but not least, in some occasions we also use bold face symbols in order to denote vectors and matrices, e.g. $\mathbf{q}=\{q_i\}$ and $\mathbf{A}=\{A_{ik}\}$, and so on.} 
In the above model the four contributions to the total energy density are 
\begin{equation}
	\mathcal{E}_1 = \frac{\rho^\gamma}{\gamma-1} e^{S/c_v}, \quad 
	\mathcal{E}_2 = \halb \rho v_i v_i, 	
	\quad 
	\mathcal{E}_3 = \frac{1}{4} \rho c_s^2 \mathring{G}_{ij} \mathring{G}_{ij}, 	
	\quad 
	\mathcal{E}_4 = \halb c_h^2 \rho J_i J_i,  	
\end{equation}
with the metric tensor $\mathbf{G}$ components and its trace-free part $\mathring{\mathbf{G}}$ 
given by 
$
	{G}_{ik} = A_{ji} A_{jk} $, and 
	$\mathring{G}_{ik} = {G}_{ik} - \frac{1}{3} \, G_{mm} \delta_{ik}
$.
The vector of thermodynamic dual variables reads $\mathbf{p} = \partial_{\q} \mathcal{E}=\{p_i\} = \left( r, 
v_i, T, 
\textcolor{red}{\alpha_{ik}}, \textcolor{red}{\beta_k} \right)^T$ with  
\begin{equation}
	r = \partial_{\rho} \mathcal{E}, 
	\qquad 
	v_i = \partial_{\rho v_i} \mathcal{E}, 
	\qquad 
	T = \partial_{\rho S} \mathcal{E},  	
	\qquad 
	\alpha_{ik} = \partial_{A_{ik}} \mathcal{E}, 
	\qquad 
	\beta_{k} = \partial_{J_{k}} \mathcal{E}. 
\end{equation}
The pressure is defined as
$
   p = \rho \, \partial_\rho \mathcal{E} + \rho v_i \, \partial_{\rho v_i} \mathcal{E} + 
   \rho S \, \partial_{\rho S} \mathcal{E} - \mathcal{E} = \rho^2 \partial_\rho E,
$
the stress tensors due to shear stress and thermal stress are, respectively,  
\begin{equation}
\sigma_{ik} = A_{ji} \partial_{A_{jk}} \mathcal{E} = 
A_{ji} \alpha_{jk} = \rho c_s^2 G_{ij} \mathring{G}_{jk}, 
\qquad 
\omega_{ik} = J_i \partial_{J_{k}} \mathcal{E} = J_i \beta_{k} = \rho c_h^2 J_i J_k,
\end{equation}
while the heat flux vector is given by 
\begin{equation}
	h_k = \partial_{\rho S} \mathcal{E} \, \partial_{J_k} \mathcal{E} = T \beta_k = \rho c_h^2 T J_k. 
\end{equation}
Note that for our convenience, we use the opposite sign in the definition of the stress tensor compared to the generally accepted notation.	 
Furthermore, $\theta_1(\tau_1)>0$ and $\theta_2(\tau_2)>0$ are two algebraic functions of the state vector $\mathbf{q}$ and the positive relaxation times $\tau_1>0$ and $\tau_2>0$:  
\begin{equation}
  \theta_1 = \frac{1}{3}  \rho z_1 \tau_1 \, c_s^2 \, \left| \mathbf{A} \right|^{-\frac{5}{3}},
  \qquad 
  \theta_2 = \rho z_2 \tau_2 \, c_h^2,
  \qquad
  z_1 = \frac{\rho_0}{\rho},
  \qquad 
  z_2 = \frac{\rho_0 T_0}{\rho \, T},
\end{equation}
with $ \rho_0 $ and $ T_0 $ being some reference density and temperature. 
It is easy to check that \eqref{eqn.energy} is a consequence of \eqref{eqn.conti}-\eqref{eqn.heatflux}, i.e.  
\begin{equation}\label{eqn.sum}
\eqref{eqn.energy} = 
 r \cdot \eqref{eqn.conti} + 
 v_i \cdot \eqref{eqn.momentum} +
 T \cdot \eqref{eqn.entropy} + 
 \alpha_{ik} \cdot \eqref{eqn.deformation} + 
 \beta_{k} \cdot \eqref{eqn.heatflux}. 
\end{equation} 
In \cite{GPRmodel} a formal asymptotic analysis of the model \eqref{eqn.conti}-\eqref{eqn.energy} was carried out, revealing that in the stiff limit the stress tensor $\sigma_{ik}$ and the heat flux $h_k$ tend to 
\begin{equation}
	\sigma_{ik} = -\frac{1}{6} \rho_0 c_s^2 \tau_1 \left( \partial_k v_i + \partial_i v_k
	- \frac{2}{3} \left( \partial_m v_m\right) \delta_{ik} \right), 
	\qquad 
	 h_k = -  \rho_0 T_0 c_h^2 \tau_2 \partial_k T,
	\label{eqn.asymptoticlimit}
\end{equation}
i.e. when the relaxation times $\tau_1, \tau_2\to 0$, the Navier-Stokes-Fourier equations are 
retrieved with effective shear viscosity $ \mu = \frac{1}{6} \rho_0 c_s^2 \tau_1 $ and heat 
conductivity \mbox{$ \kappa = \rho_0 T_0 c_h^2 \tau_2 $}.

%%%%%%%%%%%%%%%%%%%%%%%%%%%%%%%%%%%%%%%%%%%%%%%%%%%%%%%%%%%%%%%%%%
\section{Thermodynamically compatible semi-discrete finite volume scheme for the complete model in one space dimension}
\label{sec.scheme.1d}
In this section, we derive the thermodynamically compatible semi-discrete finite volume scheme for model \eqref{eqn.GPR}  in one space dimension. To this end, we start analysing the black terms on the system which enter into the original Godunov formalism \cite{God1961}. Once compatibility of these terms is established for the Euler subsystem, we can
include dissipative terms which require the consideration of the non-negative entropy production term, coloured in blue. The third step is the study of the red terms of  \eqref{eqn.momentum}-\eqref{eqn.energy} corresponding to the discretization of the distortion field and the thermal impulse. Finally, also the relaxation terms, in green, are addressed. 

Throughout the discretization, we will employ lower case subscripts, $i, \, j, \, k$, for tensor 
indices while lower case superscripts, $\ell$, refer to the spatial discretization index. 
Accordingly, we denote by $\Omega^{\ell}=[x^{\ell-\halb},x^{\ell+\halb}]$
a spatial control volume in one space dimension. 
\textcolor{black}{The Godunov form \cite{God1961} of the Euler subsystem (black terms in \eqref{eqn.GPR}) reads}  
\begin{gather} 
	\label{eqn.par0} 
	\left( \partial_{\p} L \right)_t + \partial_x \left( \partial_{\p} (v_1 L) \right) = 0, \\[2mm]
	\q = \partial_{\p} L, \qquad \p = \partial_{\q} \mathcal{E}, \qquad \f = \partial_{\p} (v_1 L), \qquad F = \p \cdot \f - v_1 L,
	\label{eqn.par02} 
\end{gather}
\textcolor{black}{with the generating potential $L = \p \cdot \q - \mathcal{E}$, which is the Legendre transform of the total energy potential $\mathcal{E}$.   
The  semi-discrete finite volume discretization of \eqref{eqn.par0} reads } 
\begin{equation}
	\label{eqn.god_sd1d}
	\frac{d}{d t} \q^{\ell} = -\frac{\f^{\ell+\halb} - \f^{\ell-\halb}}{\Delta x} 
	= -\frac{\left( \f^{\ell+\halb} - \f^{\ell}\right) -\left( \f^{\ell-\halb} - \f^{\ell}\right) }{\Delta x}
\end{equation}
with $\f^{\ell}=\f(\q^{\ell})$ and $\f(\q) = (\rho v_1, \rho v_i v_1 + p \delta_{i1}, 
\rho S v_1, \mathbf{0}, \mathbf{0} )^T$, containing only the fluxes of the Euler subsystem, i.e. 
the black terms in \eqref{eqn.GPR}, and $ F $ being the corresponding energy flux.
\textcolor{black}{Just like} on the continuous level, our first objective is to get a discrete form of the energy conservation 
as consequence of the discrete form of equations \eqref{eqn.conti}-\eqref{eqn.entropy}, see 
\eqref{eqn.sum}. Therefore we proceed alike we would do on the continuous level and we perform the dot product of the discrete dual variables, $\p^{\ell}= \partial_\q \mathcal{E}(\q^{\ell})$, with the discrete equations, obtaining 
\begin{equation} 
\label{eqn.fluct.pt_sd1d} 
\p^\ell \cdot \frac{d}{dt} \q^\ell = \frac{d}{dt}  \mathcal{E}^\ell = - \p^\ell \cdot \frac{ (\f^{\ell+\halb} - \f^{\ell}) + ( \f^\ell - \f^{\ell-\halb} ) }{\Delta x}.
\end{equation}
We now introduce the fluctuations
$D_{\mathcal{E}}^{\ell+\halb,-} = \p^{\ell} \cdot ( \f^{\ell+\halb} - \f^{\ell} )$, 
$D_{\mathcal{E}}^{\ell-\halb,+} = \p^{\ell} \cdot (\f^{\ell} - \f^{\ell-\halb} ) $.
In order to achieve a flux conservative expression for the discrete formulation of \eqref{eqn.energy}, we must be able to rewrite the fluctuations related to an interface as a flux difference 
\begin{equation}
\label{eqn.F.cond} 
D_{\mathcal{E}}^{\ell+\halb,-} + D_{\mathcal{E}}^{\ell+\halb,+} = F^{\ell+1} - F^{\ell}, 
\end{equation} 
with $F^{\ell}$ a consistent approximation of the 
total energy flux $F$. \textcolor{black}{This condition \eqref{eqn.F.cond} is mandatory in order to guarantee total energy conservation for vanishing energy flux at the boundary via the telescopic-sum property
\begin{equation}
   \sum \limits_{\ell}  \Delta x \frac{d}{dt}  \mathcal{E}^\ell = 
   - \sum \limits_{\ell} \left( D_{\mathcal{E}}^{\ell+\halb,-} + D_{\mathcal{E}}^{\ell-\halb,+} \right) =   
   - \sum \limits_{\ell} \left( F^{\ell+1} - F^{\ell} \right) = 0.   
\end{equation}
}
Substitution of the fluctuations in the former definition gives
\begin{eqnarray}
\p^{\ell} \cdot ( \f^{\ell+\halb}  - \f^{\ell} ) + \p^{\ell+1}  \cdot (\f^{\ell+1} - \f^{\ell+\halb} ) && \nonumber \\ 
=-\f^{\ell+\halb}  \cdot \left( \p^{\ell+1} - \p^{\ell} \right) + \p^{\ell+1}  \cdot \f^{\ell+1} - \p^{\ell}  \cdot \f^{\ell}  
& = & F^{\ell+1} - F^{\ell} 
\end{eqnarray} 
and, taking into account definition \eqref{eqn.par02} for $ \f $ and $ F $, we conclude
\begin{eqnarray} 
- \partial_{\p} (v_1 L)^{\ell+\halb} \cdot \left( \p^{\ell+1} - \p^{\ell} \right)  + \p^{\ell+1} \cdot \f^{\ell+1} - \p^{\ell} \cdot \f^{\ell} &=& \nonumber  \\ 
\p^{\ell+1} \cdot \f^{\ell+1} - (v_1 L)^{\ell+1} - \p^{\ell} \cdot \f^{\ell} + (v_1 L)^{\ell},  && 
\end{eqnarray} 
where $F^{\ell} = \p^{\ell} \cdot \f^{\ell} - (v_1 L)^{\ell}$. 
Accordingly, the numerical flux $\f^{\ell+\halb} = \partial_{\p} (v_1 L)^{\ell+\halb}$ must verify the 
Roe-type property, 
\begin{equation}
\label{eqn.fluxcondition_sd1d}
\f^{\ell+\halb} \cdot \left( \p^{\ell+1} - \p^{\ell} \right)  =  \partial_{\p} (v_1 L)^{\ell+\halb} \cdot \left( \p^{\ell+1} - \p^{\ell} \right) = (v_1 L)^{\ell+1} - (v_1 L)^{\ell}. 
\end{equation}
Next, we make use of the key idea on which path conservative schemes are based, see \cite{Castro2006,Pares2006}, and construct a path integral in phase-space by recalling the fundamental theorem of calculus
\begin{equation}
\label{eqn.pathint_sd1d}
(v_1 L)^{\ell+1} - (v_1 L)^{\ell} = \int \limits_{\p^{\ell}}^{\p^{\ell+1}} \partial_{\p} (v_1 L) \cdot d\p = \int \limits_{0}^{1} \partial_{\p} (v_1 L) \cdot \frac{\partial \boldsymbol{\psi}}{\partial s} ds.
\end{equation} 
Note that a similar methodology has already been successfully used in the construction of entropy-conservative fluxes \cite{Tadmor1}.
Since the path, $\boldsymbol{\psi}(s), \, s\in[0,1]$, can be freely chosen, we can select any parametrization convenient for our purposes. 
As a path connecting $\p^{\ell}$ and $\p^{\ell+1}$ we choose the simple straight line segment path 
in $\p$ variables: 
\begin{equation} 
\label{eqn.path1_sd1d} 
\boldsymbol{\psi}(s) = \p^{\ell} + s \left( \p^{\ell+1} - \p^{\ell} \right), \qquad \frac{\partial \boldsymbol{\psi}}{\partial s} = \p^{\ell+1} - \p^{\ell}, \qquad 0 \leq s \leq 1.
\end{equation}  
So \eqref{eqn.pathint_sd1d} together with \eqref{eqn.path1_sd1d} leads to 
\begin{equation}
\label{eqn.pathint.seg1_sd1d}
(v_1 L)^{\ell+1} - (v_1 L)^{\ell}  = \left( \int \limits_{0}^{1} \f(\boldsymbol{\psi}(s)) 
ds \right) \cdot \left( \p^{\ell+1} - \p^{\ell} \right). 
\end{equation} 
Therefore, the corresponding thermodynamically compatible numerical flux, 
\begin{equation}
\label{eqn.p.scheme_sd1d}
\f^{\ell+\halb}_\p = \int \limits_{0}^{1} \f(\boldsymbol{\psi}(s)) ds = 
\left( f^{\ell+\halb}_\rho, \f^{\ell+\halb}_{\rho \v}, f^{\ell+\halb}_{\rho S}, \mathbf{0}, \mathbf{0} \right)^T,  
\end{equation} 
guarantees \eqref{eqn.fluxcondition_sd1d} by construction. The subscript $\p$ refers to 
the segment path in $\p$ variables ($ \p $-scheme). For a different choice of path in 
terms of $\q$ variables ($ \q $-scheme) \textcolor{black}{the reader is referred to} \cite{SWETurbulence}.  
From the numerical point of view all path integrals appearing in this paper are approximated using a sufficiently accurate numerical quadrature rule, see e.g. \cite{OsherNC}. 
\textcolor{black}{If not stated otherwise, throughout this paper, we use a standard Gauss-Legendre quadrature rule with $n_{GP}=3$ points in order to compute the path integral appearing in \eqref{eqn.p.scheme_sd1d}. For a quantitative study of the influence of the quadrature rule on total energy conservation, see Section \ref{sec.results}.  } 
		
\subsection{Compatible scheme with dissipation terms}
So far we have presented a compatible discretization for the black terms in
\eqref{eqn.GPR}. To derive a dissipative scheme, we still need to include a compatible numerical dissipation. Let us enlarge \eqref{eqn.god_sd1d} with an additional dissipative flux and 
corresponding production terms:
\begin{equation}
\label{eqn.goddis_sd1d}
\frac{d}{dt} \q^{\ell} + \frac{\f^{\ell+\halb} - \f^{\ell-\halb}}{\Delta x} = \textcolor{blue}{\frac{\g^{\ell+\halb} - \g^{\ell-\halb}}{\Delta x} + \mathbf{P}^{\ell}}.
\end{equation}
We first focus on the numerical flux
\begin{equation}
\g^{\ell+\halb} = \epsilon^{\ell+\halb} \frac{\Delta \q^{\ell+\halb}}{\Delta x}, \qquad \Delta \q^{\ell+\halb}= \q^{\ell+1}-\q^{\ell}, 
\end{equation}
whose scalar numerical dissipation is \textcolor{black}{either chosen to be constant, $\epsilon^{\ell+\halb}=\epsilon$}, or taken of the form 
\begin{equation}
\epsilon^{\ell+\halb} = \halb \left( 1 - \phi^{\ell+\halb} \right) \Delta x \, s_{\max}^{\ell+\halb} \geq 0. 
\label{eqn.viscosity} 
\end{equation}
In the former expression, we have denoted by $s_{\max}^{\ell+\halb}$ the maximum signal speed at the cell interface and introduced $\phi^{\ell+\halb}$ which allows the use of a flux limiter, hence a reduction of the numerical dissipation in smooth regions. In particular, we consider the minbee flux limiter given by
\begin{equation}
\phi^{\ell+\halb} = \min \left( \phi^{\ell+\halb}_{-}, \phi^{\ell+\halb}_+  \right), \quad \textnormal{with} \quad 
\phi^{\ell+\halb}_{\pm} = \max\left( 0, \min \left(1, h^{\ell+\halb}_{\pm} \right) \right),
\label{eqn.fluxlimiter} 
\end{equation}
where
\begin{equation}
h^{\ell+\halb}_{-} = \frac{\mathcal{E}^{\ell} - \mathcal{E}^{\ell-1}}{ \mathcal{E}^{\ell+1} - \mathcal{E}^{\ell} }, \qquad \textnormal{and} \qquad 
h^{\ell+\halb}_{+} = \frac{\mathcal{E}^{\ell+2} - \mathcal{E}^{\ell+1}}{ \mathcal{E}^{\ell+1} - \mathcal{E}^{\ell} }
\end{equation}
are the ratios of the total energy potential slopes, see the SLIC scheme presented in \cite{toro-book} for further details on flux limiting strategies.
Note that an alternative approach to the use of flux limiters is the definition of a fixed numerical dissipation.

The dot product of $\p^{\ell}$ by \eqref{eqn.goddis_sd1d} yields
\begin{equation}
\label{eqn.pgoddis_sd1d}
\frac{d\mathcal{E}^{\ell}}{dt} + \frac{1}{\Delta x} \left(F^{\ell+\halb}-F^{\ell-\halb}\right)
= \frac{1}{\Delta x} \p^{\ell}\cdot \left(\g^{\ell+\halb} - \g^{\ell-\halb}\right) 
+\p^{\ell}\cdot \mathbf{P}^{\ell},
\end{equation}
where the left hand side has already been studied in the Godunov formalism. We therefore focus on the right hand side of the former equation obtaining
\begin{eqnarray} 
\label{eqn.E.diss.2} 
\p^{\ell} \cdot \mathbf{P}^{\ell} + \p^{\ell} \cdot \frac{\g^{\ell+\halb} - \g^{\ell-\halb}}{\Delta x} && \nonumber \\  
=\p^{\ell} \cdot \mathbf{P}^{\ell} + \frac{1}{\Delta x} \left( \halb \p^{\ell} \cdot \g^{\ell+\halb} + \halb \p^{\ell+1} \cdot \g^{\ell+\halb} + 
\halb \p^{\ell} \cdot \g^{\ell+\halb} - \halb \p^{\ell+1} \cdot \g^{\ell+\halb} \right)  && \nonumber \\ 
-\frac{1}{\Delta x} \left( \halb \p^{\ell} \cdot \g^{\ell-\halb} + \halb \p^{\ell-1} \cdot \g^{\ell-\halb} + \halb \p^{\ell} \cdot \g^{\ell-\halb} - \halb \p^{\ell-1} \cdot \g^{\ell-\halb} \right)     && \nonumber \\ 
=\p^{\ell} \cdot \mathbf{P}^{\ell}  +  \halb \frac{\p^{\ell+1} + \p^{\ell}}{\Delta x}  \cdot \epsilon^{\ell+\halb} \frac{\Delta \q^{\ell+\halb}}{\Delta x} - \halb \frac{ \p^{\ell} + \p^{\ell-1}}{\Delta x} \cdot \epsilon^{\ell-\halb} \frac{\Delta \q^{\ell-\halb}}{\Delta x}   && \nonumber \\   
- \halb \frac{\p^{\ell+1} - \p^{\ell}}{\Delta x} \cdot \epsilon^{\ell+\halb} \frac{\Delta \q^{\ell+\halb}}{\Delta x}  
- \halb \frac{\p^{\ell} - \p^{\ell-1}}{\Delta x} \cdot \epsilon^{\ell-\halb} \frac{\Delta \q^{\ell-\halb}}{\Delta x}. 
\end{eqnarray}
Besides, applying path integration yields
\begin{equation}
\label{eqn.pathintegraldis_sd1d}
\int \limits_{\q^{\ell}}^{\q^{\ell+1}} \p \, \cdot \, d \q  =  \int 
\limits_{\q^{\ell}}^{\q^{\ell+1}} \partial_\q \mathcal{E} \cdot d \q = \mathcal{E}^{\ell+1} - 
\mathcal{E}^{\ell} = \Delta \mathcal{E}^{\ell+\halb}.
\end{equation}
So 
$\halb ( \p^{\ell+1} + \p^{\ell} ) \cdot \Delta \q^{\ell+\halb}$
can be seen as an approximation of $\Delta \mathcal{E}^{\ell+\halb}$.
As a consequence of \eqref{eqn.E.diss.2} and \eqref{eqn.pathintegraldis_sd1d}, the energy flux including convective and diffusive terms is 
\begin{equation}
F^{\ell+\halb}_d = F^{\ell+\halb} - \halb ( \p^{\ell+1} + \p^{\ell} ) \cdot \epsilon^{\ell+\halb} \frac{\Delta \q^{\ell+\halb}}{\Delta x} 
\approx   F^{\ell+\halb} -  \epsilon^{\ell+\halb} \frac{\Delta \mathcal{E}^{\ell+\halb}}{\Delta x}.
\end{equation} 
\noindent To transform the jumps in $\p$ variables into jumps in $\q$ variables, we need to introduce a Roe-type matrix $\partial^2_{\q\q} \tilde{\mathcal{E}}^{\ell+\halb}$ 
verifying the Roe property
\begin{equation}
	\label{eqn.roeprop2}
	\partial^2_{\q\q} \tilde{\mathcal{E}}^{\ell+\halb} \cdot (\q^{\ell+1}-\q^{\ell} ) = \p^{\ell+1} - \p^{\ell}.
\end{equation}
\noindent For its calculation, we introduce another segment path $\tilde{\boldsymbol{\psi}}$, written in terms of $\q$  
\begin{equation} 
	\label{eqn.path2_sd1d} 
	\tilde{\boldsymbol{\psi}}(s) =  \q^{\ell} + s \left( \q^{\ell+1} - \q^{\ell} \right), \qquad 0 \leq s \leq 1,  
\end{equation}  
allowing to compute the sought Roe matrix as 
\begin{equation}
	\tilde{\boldsymbol{\mathcal{H}}}^{\ell+\halb} = \partial^2_{\q\q}\tilde{\mathcal{E}}^{\ell+\halb} = \int \limits_0^1 \partial^2_{\q\q} \mathcal{E}\left(\tilde{\boldsymbol{\psi}}(s)\right) ds =: 
	\left( \partial^2_{\p\p} \tilde{L}^{\ell+\halb} \right)^{-1},
	\label{eqn.EqqLqqRoe}
\end{equation} 
which satisfies \eqref{eqn.roeprop2} by construction. 
Substituting the obtained flux in \eqref{eqn.pgoddis_sd1d} 
and taking into account \eqref{eqn.roeprop2} 
gives
\begin{eqnarray} 
\label{eqn.E.diss.3} 
\frac{d}{dt} \mathcal{E}^{\ell} + \frac{F_d^{\ell+\halb} - F_d^{\ell-\halb}}{\Delta x} = \p^{\ell} \cdot \mathbf{P}^{\ell} &&  \\ 
- \halb \epsilon^{\ell+\halb} \frac{\q^{\ell+1} - \q^{\ell}}{\Delta x} \cdot \tilde{\boldsymbol{\mathcal{H}}}^{\ell+\halb} \frac{\q^{\ell+1}-\q^{\ell}}{\Delta x}  
- \halb \epsilon^{\ell-\halb} \frac{\q^{\ell} - \q^{\ell-1}}{\Delta x} \cdot \tilde{\boldsymbol{\mathcal{H}}}^{\ell-\halb} \frac{\q^{\ell}-\q^{\ell-1}}{\Delta x}. \nonumber
\end{eqnarray}
Thus, defining the production term $\mathbf{P}^{\ell} = (0,\mathbf{0},\Pi^{\ell},\mathbf{0},\mathbf{0})^T$ 
\begin{equation} 
\p^{\ell} \cdot \mathbf{P}^{\ell} = T^{\ell} \Pi^{\ell} = 
\halb \epsilon^{\ell+\halb} \frac{\Delta \q^{\ell+\halb}}{\Delta x} \cdot \tilde{\boldsymbol{\mathcal{H}}}^{\ell+\halb} \frac{\Delta \q^{\ell+\halb}}{\Delta x} +  
\halb \epsilon^{\ell-\halb} \frac{\Delta \q^{\ell-\halb}}{\Delta x} \cdot \tilde{\boldsymbol{\mathcal{H}}}^{\ell-\halb} \frac{\Delta \q^{\ell-\halb}}{\Delta x},
\label{eqn.production.condition} 
\end{equation}
we obtain the sought semi-discrete total energy conservation law 
\begin{equation} 
\label{eqn.E.diss} 
\frac{d}{dt} \mathcal{E}^{\ell} + \frac{F_d^{\ell+\halb} - F_d^{\ell-\halb}}{\Delta x} = 0.    
\end{equation}
Note that, as expected, the above definition provides a zero production term for all equations but for \eqref{eqn.entropy}.
The final compatible flux including convective and diffusive terms reads
\begin{eqnarray}
\f^{\ell+\halb}_{\p,d} &=& 
 \int \limits_{0}^{1} \f(\boldsymbol{\psi}(s)) ds  
- \frac{\epsilon^{\ell+\halb}}{\Delta x} \left( \q^{\ell+1} - \q^{\ell} \right).  
\label{eqn.p.scheme.diss} 
\end{eqnarray}

\subsection{Compatible discretization of the terms related to the distortion field}
%Computation of $\tilde{u}_{A}$ for the distortion equations to get a compatible advection for $A$
The momentum flux in \eqref{eqn.momentum} gathers four terms. The first two, in black, belong to 
the Euler subsystem and have already been studied in the previous sections. The third term, 
$\textcolor{red}{\sigma_{ik}}$, is related to the distortion field and thus its compatibility must 
be analysed together with the distortion transport equations, \eqref{eqn.deformation}, and the 
terms $\textcolor{red}{\mathcal{E}_3}$ and $\textcolor{red}{\sigma_{ik}}$ in the energy equation 
\eqref{eqn.energy}.
Let us consider the red terms in \eqref{eqn.deformation} (except for the convective term $ 
v_1\partial_{x} 
A_{ik} $), 
\begin{equation}
	\frac{\partial (A_{im} v_m)}{\partial x} - 
	v_m \frac{\partial A_{im}}{\partial x}  = A_{im}\frac{\partial v_m }{\partial x},
\end{equation}
and the following chosen discretization
\begin{equation}
	\Delta x A_{im} \partial_x \partial v_m \approx A_{im}^{\ell+\halb} \left(v_{m}^{\ell+1}-v_{m}^{\ell}\right) \quad \mathrm{with} \qquad  A_{im}^{\ell+\halb} = \halb\left( A_{im}^{\ell+1} + A_{im}^{\ell}\right).
\end{equation}
Multiplication of equations \eqref{eqn.momentum}, \eqref{eqn.deformation} by $\partial_{\rho v_{i }} \mathcal{E}=v_{i}$, $\partial_{A_{ik}} \mathcal{E} =\alpha_{ik}$  and assuming a compatible discretization with the term $\partial_x \left( v_{i}\sigma_{ik}\right)$ in \eqref{eqn.energy} leads to
\begin{eqnarray}
	v_{i}^{\ell}\left(\sigma_{ik}^{\ell+\halb} - \sigma_{ik}^{\ell}\right) 
	+ v_{i}^{\ell+1}\left(\sigma_{ik}^{\ell+1} - \sigma_{ik}^{\ell+\halb}\right)
	+ \alpha_{ik}^{\ell} \halb A_{im}^{\ell+\halb} \left( v_{m}^{\ell+1} - v_{m}^{\ell} \right) \notag \\
	+ \alpha_{ik}^{\ell+1} \halb A_{im}^{\ell+\halb} \left( v_{m}^{\ell+1} - v_{m}^{\ell} \right)
	= v_{i}^{\ell+1} \sigma_{ik}^{\ell+1} - v_{i}^{\ell} \sigma_{ik}^{\ell}.
\end{eqnarray}
We therefore obtain the following discretization for $\sigma_{ik}^{\ell+\halb} $:
\begin{equation}
	\sigma_{ik}^{\ell+\halb} = \halb \left(\alpha_{mk}^{\ell+1}+ \alpha_{mk}^{\ell}\right)A_{mi}^{\ell+\halb}.
\end{equation}

We now focus on the remaining flux term $v_{1}\partial_{x} A_{ik}$. We multiply the continuity equation \eqref{eqn.conti} by the dual variable $\partial_{\rho} \mathcal{E}_{3}=E_{3}$ and \eqref{eqn.deformation} by $\partial_{A_{ik}} \mathcal{E}=\alpha_{ik}$ and impose the compatibility condition with the energy conservation equation yielding  
\begin{eqnarray}
	E_{3}^{\ell}  \left(\rho v_1^{\ell+\halb}-\rho v_1^{\ell}\right)
	+E_{3}^{\ell+1}  \left(\rho v_1^{\ell+1}-\rho v_1^{\ell+\halb}\right)
	+ \alpha^{\ell}_{ik}\halb \tilde{v}^{\ell+\halb}_{A \,1} 
	\left(A^{\ell+1}_{ik}-A^{\ell}_{ik}\right) \notag\\
	+ \alpha^{\ell+1}_{ik}\halb \tilde{v}^{\ell+\halb}_{A \,1} 
	\left(A^{\ell+1}_{ik}-A^{\ell}_{ik}\right)
	= \rho v^{\ell+1}_{1}E_{3}^{\ell+1} - \rho v^{\ell}_{1}E_{3}^{\ell},
\end{eqnarray}
where the approximation of the averaged velocity $\tilde{v}^{\ell+\halb}_{A \,1}$ still needs to be defined.
Collecting terms, we get
\begin{equation}
	\label{eqn.derivutildeA_sd1D}
	- \rho v^{\ell+\halb}_{1}\left( E_{3}^{\ell+1} - E_{3}^{\ell} \right)
	+\halb \tilde{v}^{\ell+\halb}_{A \,1} \left(\alpha^{\ell+1}_{ik} +\alpha^{\ell}_{ik} \right) \left(A^{\ell+1}_{ik}-A^{\ell}_{ik}\right) =0.
\end{equation}
Hence, the average velocity must be discretised as
\begin{equation}
	\label{eqn.averagevelocityA_sd1D}
	\tilde{v}^{\ell+\halb}_{A \,1} = \frac{\rho v^{\ell+\halb}_{1}\left( E_{3}^{\ell+1} - E_{3}^{\ell} \right)}{\halb  \left(\alpha^{\ell+1}_{ik} +\alpha^{\ell}_{ik} \right) \left(A^{\ell+1}_{ik}-A^{\ell}_{ik}\right)} 
\end{equation}
if the denominator in \eqref{eqn.averagevelocityA_sd1D} is non-zero, otherwise we set 
$
	\tilde{v}^{\ell+\halb}_{A \,1} = \halb \left(v^{\ell+1}_{1}+v^{\ell}_{1} \right).%\mathbf{n} .
$ 
Finally, if $E_{3}^{\ell+1} - E_{3}^{\ell} = 0$, from \eqref{eqn.derivutildeA_sd1D}, we get $\tilde{v}^{\ell+\halb}_{A \,1} = 0$.

\subsection{Compatible discretization of the terms related to the thermal impulse}
Similarly to what has been done for the distortion field, in this section we derive the discretization of the red terms in \eqref{eqn.momentum}, \eqref{eqn.heatflux}, \eqref{eqn.energy} related to the heat flux.
First, we focus on terms $\partial_x\omega_{i1}$ in 
\eqref{eqn.momentum}, 
$J_{m} \partial_x v_{m}= 
\partial_x (J_{m}v_{m}) - v_{m} \partial_x J_{m}$ in 
\eqref{eqn.heatflux} and 
$\partial_x \left( v_{i}\omega_{i1}\right)$ in \eqref{eqn.energy}. Multiplying the momentum equation by
$\partial_{\rho v_{i}} \mathcal{E} = v_{i}$, the thermal impulse equation by $\partial_{J_{1}} \mathcal{E}=\beta_{1}$ and requiring compatibility with the energy equation we get
\begin{eqnarray}
v_{i}^{\ell}\left(\omega_{i1}^{\ell+\halb}-\omega_{i1}^{\ell}\right)
+ v_{i}^{\ell+1}\left(\omega_{i1}^{\ell+1}-\omega_{i1}^{\ell+\halb}\right)
+\beta_{1}^{\ell}\halb J_{m}^{\ell+\halb} \left( v_{m}^{\ell+1} -  v_{m}^{\ell} \right) \notag \\
+\beta_{1}^{\ell+1}\halb J_{m}^{\ell+\halb} \left( v_{m}^{\ell+1} -  v_{m}^{\ell} \right)
= v_{i}^{\ell+1} \omega_{i1}^{\ell+1} -v_{i}^{\ell} \omega_{i1}^{\ell}.
\label{eqn.fluxcomp_sd1D}
\end{eqnarray}
Relabeling the repeated index $m$ 
and defining 
$
{J}^{\ell+\halb}_{i} = \halb\left({J}^{\ell+1}_{i}+{J}^{\ell}_{i}\right),
$
yields 
\begin{equation}
- \omega_{i1}^{\ell+\halb}\left( v_{i}^{\ell+1} - v_{i}^{\ell} \right)
+ \halb \left(\beta_{1}^{\ell+1}+\beta_{1}^{\ell}\right){J}^{\ell+\halb}_{i} \left( v_{i}^{\ell+1} 
- v_{i}^{\ell} \right) = 0.
\end{equation}
Thus choosing
$
	\omega_{i1}^{\ell+\halb} = \halb \left(\beta_{1}^{\ell+1}+\beta_{1}^{\ell}\right) {J}^{\ell+\halb}_{i}
$
gives the sought compatibility.

Next, we need to compute the discretization related to the term $v_{1} \partial_x J_{k}$ in \eqref{eqn.heatflux}.
Multiplication of \eqref{eqn.conti} by $\partial_{\rho} \mathcal{E}_{4}=E_{4}$ and addition of \eqref{eqn.heatflux} multiplied by $\partial_{J_{k}} \mathcal{E}=\beta_{k}$ 
yields
\begin{eqnarray}
	E_{4}^{\ell} \left(\rho v_1^{\ell+\halb}-\rho v_1^{\ell}\right)
	+E_{4}^{\ell+1} \left(\rho v_1^{\ell+1}-\rho v_1^{\ell+\halb}\right)
	+\halb \tilde{v}^{\ell+\halb}_{J \, 1} \beta^{\ell}_{k} \left(J^{\ell+1}_{k}-J^{\ell}_{k}\right) \notag\\
	+\halb \tilde{v}^{\ell+\halb}_{J \, 1} \beta^{\ell+1}_{k} \left(J^{\ell+1}_{k}-J^{\ell}_{k}\right)
	= \rho v^{\ell+1}_{1}E_{4}^{\ell+1} - \rho v^{\ell}_{1}E_{4}^{\ell}.
\end{eqnarray}
Hence,
\begin{equation}
	\tilde{v}^{\ell+\halb}_{J \, 1} = \frac{\rho v^{\ell+\halb}_{1}\left( E_{4}^{\ell+1} - E_{4}^{\ell} \right)}{\halb  \left(\beta^{\ell+1}_{k} +\beta^{\ell}_{k} \right) \left(J^{\ell+1}_{k}-J^{\ell}_{k}\right)}
\end{equation}
is the compatible discretization for the advection speed related to the thermal impulse.
Analogous to the previous section, for null denominator and $E_{4}^{\ell+1} - E_{4}^{\ell}\neq 0$ we define $\tilde{v}^{\ell+\halb}_{J \, 1}$ as the arithmetic average of the velocity in the two related cells.

It now just remains to establish the discrete compatibility between the term ${\beta_{k}}$ in equation 
\eqref{eqn.entropy}, ${T}$ in equation \eqref{eqn.heatflux} and 
${h_{k}}$ in equation \eqref{eqn.energy}.
Let us assume we have the following given discretization for the gradient of $T$:
\begin{equation}
	\Delta x \partial_x T \approx \halb\left(T^{\ell+1}-T^{\ell} \right).
\end{equation}
Then, multiplication of the thermal impulse equation \eqref{eqn.heatflux} by $\partial_{J_{k}} \mathcal{E} = 
\beta_{k}$ and the entropy relation by $\partial_{\rho S} \mathcal{E}=T$ gives
\begin{eqnarray}
	T^{\ell}\left(\beta_{k}^{\ell+\halb}-\beta_{k}^{\ell}\right) 
	+ T^{\ell+1}\left(\beta_{k}^{\ell+1}-\beta_{k}^{\ell+\halb}\right)
	+ \beta_{k}^{\ell} \halb \left(T^{\ell+1}-T^{\ell}\right) \notag\\
	+ \beta_{k}^{\ell+1} \halb \left(T^{\ell+1}-T^{\ell}\right) 
	= \beta_{k}^{\ell+1}T^{\ell+1}-\beta_{k}^{\ell}T^{\ell}.
\end{eqnarray}
Hence, by simply defining 
$
	\beta_k^{\ell+\halb} = \halb\left(\beta_{k}^{\ell+1}+\beta_{k}^{\ell}\right)
$
we get a compatible discretization of the equations.

\subsection{Compatible discretization of relaxation terms}
Finally, it is easy to see that the relaxation terms, in green in \eqref{eqn.entropy}-\eqref{eqn.heatflux}, cancel. Multiplication of $\partial_{\rho S} \mathcal{E}=T$, $\partial_{A_{ik}} \mathcal{E}=\alpha_{ik}$ and $\partial_{J_{k}} \mathcal{E}=\beta_{k}$ by the green terms in \eqref{eqn.entropy}-\eqref{eqn.heatflux}, respectively, and adding the result gives
\begin{equation}
	T  \frac{\alpha_{ik} \alpha_{ik} }{\theta_1(\tau_1) T}  + T
	\frac{\beta_i \beta_i}{\theta_2(\tau_2) T} - \alpha_{ik} \frac{ \alpha_{ik} }{\theta_1(\tau_1)} -\beta_k \frac{\beta_k}{\theta_2(\tau_2)} = 0.
\end{equation}
Thus, the compatibility is proven by construction.

%%%%%%%%%%%%%%%%%%%%%%%%%%%%%%%%%%%%%%%%%%%%%%%%%%%%%%%%%%%%%%%%%%
\section{Thermodynamically compatible semi-discrete finite volume scheme for the complete model in two space dimensions}
\label{sec.scheme.2d}
The derivation of the thermodynamically compatible semi-discrete finite volume scheme for the 
complete model in two space dimensions can be done following the steps described in the previous 
section. Here we summarize the final scheme and provide the mathematical proofs of the marginal nonlinear stability in the energy norm and of the semi-discrete cell entropy inequality. Let us 
consider the spatial control volume $\Omega^{\ell} $ with circumcenter $\mathbf{x}^{\ell}$, 
one of its neighbors  $ \Omega^{\err} $ and the common edge $ \partial \Omega^{\ell\err} $, 
$\normal = (n_1,n_2)^T$ being the outward unit normal vector to the face 
$\partial\Omega^{\ell\err}$ \textcolor{black}{and $N_{\ell}$ being the set of neighbors of cell $\Omega^\ell$.}
%where $s$ corresponds to the superscript for spatial discretization in $y$-direction 
%and $\ell$ is the cell index. 
The final 
semi-discrete finite volume scheme reads
\begin{subequations}\label{eqn.schemeGPR2D}
	\begin{align}
		 \frac{\partial \rho^{\ell}}{\partial t} = & -\frac{1}{\left|\Omega^{\ell}\right|} \sum_{\err\in N_{\ell}} \left|\partial\Omega^{\ell\err}\right| D_{\rho}^{\ell \err,-}
		  \textcolor{blue}{+ \frac{1}{\left|\Omega^{\ell}\right|} \sum_{\err\in N_{\ell}} \left|\partial\Omega^{\ell\err}\right| g_{\rho,\,\normal}^{\ell\err}}, \label{eqn.schemeGPR2D_mass}\\
		  \frac{\partial (\rho v_{i}^{\ell})}{\partial t} = & -\frac{1}{\left|\Omega^{\ell}\right|} 
		  \sum_{\err\in N_{\ell}} 
		  \left|\partial\Omega^{\ell\err}\right| D_{\rho v_{i}}^{\ell \err,-}
		  \textcolor{red}{- \frac{1}{\left|\Omega^{\ell}\right|} \sum_{\err\in N_{\ell}} 
		  \left|\partial\Omega^{\ell\err}\right|\sigma_{ik}^{\ell\err,\,-}n_k} 
		   \nonumber\\
		 & \textcolor{red}{- \frac{1}{\left|\Omega^{\ell}\right|} \sum_{\err\in N_{\ell}} 
		 \left|\partial\Omega^{\ell\err}\right|\omega_{ik}^{\ell\err,\,-}n_k}
		  \textcolor{blue}{+ \frac{1}{\left|\Omega^{\ell}\right|} \sum_{\err\in N_{\ell}}     \left|\partial\Omega^{\ell\err}\right| g_{\rho v_{i},\,\normal}^{\ell\err}},\label{eqn.schemeGPR2D_momentum} \\
		  \frac{\partial (\rho S^{\ell})}{\partial t} = & -\frac{1}{\left|\Omega^{\ell}\right|} 
		  \sum_{\err\in N_{\ell}}
		  \left|\partial\Omega^{\ell\err}\right| D_{\rho S}^{\ell \err,-}
		  \textcolor{red}{-\frac{1}{\left|\Omega^{\ell}\right|} \sum_{\err\in N_{\ell}}  
		  \left|\partial\Omega^{\ell\err}\right| \left( \beta_{k}^{\ell\err}-\beta_{k}^{\ell} 
		  \right)n_k } \nonumber\\
		 & \textcolor{blue}{+ \frac{1}{\left|\Omega^{\ell}\right|} \sum_{\err\in N_{\ell}}     \left|\partial\Omega^{\ell\err}\right| g_{\rho S,\,\normal}^{\ell\err}}
		 \textcolor{blue}{+\frac{1}{\left|\Omega^{\ell}\right|} \sum_{\err\in N_{\ell}} 
		 \left|\partial\Omega^{\ell\err}\right| \Pi^{\ell\err,-}_{\normal}}
		 \textcolor{darkgreen}{+\frac{\alpha_{ik}^{\ell}\alpha_{ik}^{\ell}}{\theta_1^{\ell}\left(\tau_{1}\right) T^{\ell} } }  
		 \textcolor{darkgreen}{+\frac{\beta_{i}^{\ell}\beta_{i}^{\ell}}{\theta_2^{\ell}\left(\tau_{2}\right) T^{\ell} } }, \label{eqn.schemeGPR2D_entropy}\\
		 \textcolor{red}{\frac{\partial A_{ik}^{\ell}}{\partial t}} = &
		 \textcolor{red}{- \frac{1}{\left|\Omega^{\ell}\right|} \sum_{\err\in N_{\ell}} 
		 \left|\partial\Omega^{\ell\err}\right| 
		 \frac{1}{2}A^{\ell\err}_{im}\left(v_{m}^{\err}-v_{m}^{\ell} \right) 
		 n_k}\nonumber\\
		 &
		 \textcolor{red}{- \frac{1}{\left|\Omega^{\ell}\right|} \sum_{\err\in N_{\ell}} \left|\partial\Omega^{\ell\err}\right| \frac{1}{2} \tilde{u}_{A,\,\normal}^{\ell\err}   \left(A_{ik}^{\err}-A_{ik}^{\ell}\right)}
		  \textcolor{blue}{+ \frac{1}{\left|\Omega^{\ell}\right|} \sum_{\err\in N_{\ell}} \left|\partial\Omega^{\ell\err}\right| g_{A_{ik},\,\normal}^{\ell\err}}
		  \textcolor{darkgreen}{-\frac{\alpha_{ik}^{\ell}}{\theta_1^{\ell}\left(\tau_{1}\right) } },  
		 \label{eqn.schemeGPR2D_dist} \\
		 \textcolor{red}{\frac{\partial J_{k}^{\ell}}{\partial t}} = &
		 \textcolor{red}{- \frac{1}{\left|\Omega^{\ell}\right|} \sum_{\err\in N_{\ell}} 
		 \left|\partial\Omega^{\ell\err}\right| 
		 \frac{1}{2}J^{\ell\err}_{i}\left(v_{m}^{\err}-v_{m}^{\ell} \right) n_k}
		 \textcolor{red}{- \frac{1}{\left|\Omega^{\ell}\right|} \sum_{\err\in N_{\ell}} 
		 \left|\partial\Omega^{\ell\err}\right| \frac{1}{2} T^{\ell\err,-} n_k} \nonumber\\
		 &
		 \textcolor{red}{- \frac{1}{\left|\Omega^{\ell}\right|} \sum_{\err\in N_{\ell}} \left|\partial\Omega^{\ell\err}\right| \frac{1}{2} \tilde{u}_{J,\,\normal}^{\ell\err}   \left(J_{k}^{\err}-J_{k}^{\ell}\right)}
		  \textcolor{blue}{+ \frac{1}{\left|\Omega^{\ell}\right|} \sum_{\err\in N_{\ell}} \left|\partial\Omega^{\ell\err}\right| g_{J_{k},\,\normal}^{\ell\err}}
		 \textcolor{darkgreen}{-\frac{\beta_{i}^{\ell}}{\theta_2^{\ell}\left(\tau_{2}\right)}}
		 \label{eqn.schemeGPR2D_heatf}
	\end{align}
\end{subequations}
with
\begin{gather}
	D_{\q}^{\ell \err,-} = \left(f_{\q,\,k}^{\ell\err}-f_{\q,\,k}^{\ell}\right)n_k,  
	\label{eqn.fluctuations_2D}\\
	 g_{\q,\,\normal}^{\ell\err} = 
	 \epsilon^{\ell\err}\frac{\q^{\err}-\q^{\ell}}{\delta^{\ell\err}}= 
	 \epsilon^{\ell\err}\frac{\Delta
	  \q^{\ell\err}}{\delta^{\ell\err}}, \quad \delta^{\ell\err} = \left\| \mathbf{x}^\err - \mathbf{x}^\ell \right\| = \Delta x n_{1}+\Delta y n_{2}, 
	 \label{eqn.diffusion_2D}\\
	\sigma_{jk}^{\ell\err,-} = \sigma_{jk}^{\ell\err} -\sigma_{jk}^{\ell} ,
	\quad \sigma_{jk}^{\ell\err} = \halb A_{ij}^{\ell\err}\left(\alpha_{ik}^{\ell}+\alpha_{ik}^{\err}\right), 
	\label{eqn.sigma_2D} \\
	\omega_{jk}^{\ell\err,-} = \omega_{jk}^{\ell\err}-\omega_{jk}^{\ell},\quad \omega_{jk}^{\ell\err} = \halb	 \left(\beta_{k}^{\ell}+\beta_{k}^{\err}\right)J_{i}^{\ell\err},
	\label{eqn.omegabeta_2D} \\
	\Pi^{\ell\err,-}_{\normal} = \frac{1}{2}\epsilon^{\ell\err} \frac{\Delta \q^{\ell\err}}{T^{\ell}} \cdot  \partial^2_{\q \q} \mathcal{E}^{\ell\err} \frac{\Delta \q^{\ell\err}}{\delta^{\ell\err}}, \qquad 
	T^{\ell} = \frac{\left( \rho^{\ell}\right)^{\gamma-1}}{\left( \gamma-1\right) c_{v}} e^{\frac{ S^{\ell}}{c_{v}}} ,\label{eqn.production_2D}\\
	A^{\ell\err}_{im} = \frac{1}{2}\left( A^{\ell}_{im}+A^{\err}_{im}\right), \qquad 
	\tilde{u}_{A,\,\normal}^{\ell\err}  = 	\tilde{v}_{A,\,j}^{\ell\err} n_{j} =
	\frac{f_{\rho,\,j}^{\ell\err}n_{j}\left(E_{3}^{\err}-E_{3}^{\ell}\right) }{\frac{1}{2} \left( 
	\alpha_{ik}^{\ell}+\alpha_{ik}^{\err}\right) \left(A_{ik}^{\err}-A_{ik}^{\ell} \right) }, 
	\label{eqn.utildeA_2D}\\
	J^{\ell\err}_{i}\! =\! \frac{1}{2}\left( J^{\ell}_{i}+J^{\err}_{i}\right), \,\,\, 
	\tilde{u}_{J,\,\normal}^{\ell\err} \! = \!\tilde{v}_{J,\,j}^{\ell\err} n_{j} = 
	\frac{f_{\rho,\,j}^{\ell\err}n_{j}\left(E_{4}^{\err}-E_{4}^{\ell}\right) }{\frac{1}{2} \left( 
	\beta_{k}^{\ell}+\beta_{k}^{\err}\right) \left(J_{k}^{\err}-J_{k}^{\ell} \right) },\quad 
	T^{\ell\err,-}\! =\! T^{\err}-T^{\ell}. \label{eqn.utildeJ_2D}
\end{gather}
\begin{theorem}\label{theorem.energy.semidiscrete}
	The thermodynamically compatible semi-discrete finite volume \\
scheme \eqref{eqn.schemeGPR2D}
	admits the semi-discrete energy conservation law
	\begin{equation}\label{eqn.schemeGPR2D_energy}
		\frac{\partial \mathcal{E}^{\ell}}{\partial t} = 
		-\frac{1}{\left|\Omega^{\ell}\right|} \sum_{\err\in N_{\ell}} \left|\partial\Omega^{\ell\err}\right| D_{\mathcal{E}}^{\ell \err,-}
		 \textcolor{blue}{+ \frac{1}{\left|\Omega^{\ell}\right|} \sum_{\err\in N_{\ell}} \left|\partial\Omega^{\ell\err}\right| g_{\mathcal{E},\,\normal}^{\ell\err}}
	\end{equation}	
	with
	\begin{equation}
		D_{\mathcal{E}}^{\ell \err,-} +  D_{\mathcal{E}}^{ \ell \err,+} = D_{\mathcal{E}}^{\ell \err,-} +  D_{\mathcal{E}}^{ \err \ell,-} =  F^{\err} -F^{\ell}.
	\end{equation}
	Assuming that the jumps on the boundary vanish, the scheme is nonlinearly marginally stable in the energy norm, i.e. the scheme satisfies the identity 
	\begin{equation}
		\int_{\Omega} \frac{\partial \mathcal{E}^{\ell}}{\partial t} dV
		= \sum_{\ell} | \Omega^{\ell} | \frac{\partial \mathcal{E}^{\ell}}{\partial t} = 0.
	\end{equation}
\end{theorem}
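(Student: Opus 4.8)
The plan is to reproduce the continuous identity \eqref{eqn.sum} verbatim at the discrete level. Concretely, I would contract the discrete dual state $\p^{\ell} = \partial_{\q}\mathcal{E}(\q^{\ell}) = (r^\ell, v_i^\ell, T^\ell, \alpha_{ik}^\ell, \beta_k^\ell)^T$ with the full semi-discrete system \eqref{eqn.schemeGPR2D}, i.e. form $r^\ell\cdot\eqref{eqn.schemeGPR2D_mass} + v_i^\ell\cdot\eqref{eqn.schemeGPR2D_momentum} + T^\ell\cdot\eqref{eqn.schemeGPR2D_entropy} + \alpha_{ik}^\ell\cdot\eqref{eqn.schemeGPR2D_dist} + \beta_k^\ell\cdot\eqref{eqn.schemeGPR2D_heatf}$. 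By the chain rule and $\p^\ell = \partial_\q\mathcal{E}$ the left-hand side collapses to $\partial_t\mathcal{E}^\ell$, exactly as in the one-dimensional step \eqref{eqn.fluct.pt_sd1d}. It then remains to show that the right-hand side reorganizes into the two flux sums of \eqref{eqn.schemeGPR2D_energy}, which amounts to establishing the interface identity $D_{\mathcal{E}}^{\ell\err,-} + D_{\mathcal{E}}^{\err\ell,-} = F^\err - F^\ell$ face by face, now read along the normal $\normal$.

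I would carry this out color by color, invoking the compatibility relations of Section \ref{sec.scheme.1d}. For the black Euler fluxes $D_{\q}^{\ell\err,-}$ of \eqref{eqn.fluctuations_2D}, the straight-line path integral \eqref{eqn.p.scheme_sd1d} together with the Roe-type property \eqref{eqn.fluxcondition_sd1d} reproduces the convective part of the energy jump. The red stress, thermal-stress and non-conservative terms are precisely those handled by the tailored averages \eqref{eqn.sigma_2D}, \eqref{eqn.omegabeta_2D}, \eqref{eqn.utildeA_2D} and \eqref{eqn.utildeJ_2D}: contracting the field equations with $v_i^\ell$, $\alpha_{ik}^\ell$, $\beta_k^\ell$ and the continuity/entropy equations with the matching duals makes the products $v_i\sigma_{ik}$, $v_i\omega_{ik}$ and $h_k$ telescope into the remaining contributions of $F^\err - F^\ell$, by the algebraic identities derived in the 1D subsections. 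The blue terms follow \eqref{eqn.E.diss.2}--\eqref{eqn.production.condition}: the Roe matrix $\tilde{\boldsymbol{\mathcal{H}}}^{\ell\err}$ of \eqref{eqn.EqqLqqRoe} turns the $\p$-jumps into $\q$-jumps, the symmetric quadratic part is absorbed by the entropy production $T^\ell\,\Pi^{\ell\err,-}_{\normal}$ of \eqref{eqn.production_2D}, and the leftover antisymmetric piece is exactly the diffusive energy flux $g_{\mathcal{E},\normal}^{\ell\err} = \halb(\p^\ell+\p^\err)\cdot g_{\q,\normal}^{\ell\err}$. Finally the green relaxation terms cancel identically, as in Section \ref{sec.scheme.1d}, so no source survives in the energy balance and \eqref{eqn.schemeGPR2D_energy} is established.

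With the per-cell law in hand, marginal stability follows by summing \eqref{eqn.schemeGPR2D_energy} over all cells with weights $|\Omega^\ell|$. For the convective sum I would group $\sum_\ell\sum_{\err\in N_\ell}|\partial\Omega^{\ell\err}|\,D_{\mathcal{E}}^{\ell\err,-}$ face by face; each interior face contributes $|\partial\Omega^{\ell\err}|(D_{\mathcal{E}}^{\ell\err,-} + D_{\mathcal{E}}^{\err\ell,-}) = |\partial\Omega^{\ell\err}|(F_k^\err - F_k^\ell)n_k^{\ell\err}$. Re-collecting these contributions cell by cell, the physical normal flux $F_k^m$ of a fixed cell $m$ appears over all faces of $m$ as $-F_k^m\sum_{\err\in N_m}|\partial\Omega^{m\err}|\,n_k^{m\err}=0$, since $\sum_{\err\in N_m}|\partial\Omega^{m\err}|\,\normal^{m\err}=\mathbf{0}$ for a closed control volume; hence, once boundary jumps vanish, the whole convective sum is zero. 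The diffusive sum vanishes even more directly: $g_{\mathcal{E},\normal}^{\ell\err}=\halb(\p^\ell+\p^\err)\cdot\epsilon^{\ell\err}\frac{\q^\err-\q^\ell}{\delta^{\ell\err}}$ is antisymmetric under $\ell\leftrightarrow\err$, so the two contributions at each face cancel in pairs. Together these give $\sum_\ell|\Omega^\ell|\,\partial_t\mathcal{E}^\ell=0$.

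The main obstacle is the bookkeeping of the red, non-conservative terms. One must verify that the stress tensor $\sigma_{ik}^{\ell\err}$, the thermal stress $\omega_{ik}^{\ell\err}$ and the advection speeds $\tilde{u}_{A,\normal}^{\ell\err}$, $\tilde{u}_{J,\normal}^{\ell\err}$ are \emph{forced} to exactly the forms \eqref{eqn.sigma_2D}--\eqref{eqn.utildeJ_2D} so that the contracted field equations reassemble precisely into the physical energy-flux jump with no spurious remainder, and that the Roe-type property \eqref{eqn.fluxcondition_sd1d} of the path integral and the exact quadratic production \eqref{eqn.production_2D} are used in tandem so the blue part closes. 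This is the step in which the freedom in the discretization is completely consumed. Conceptually it is a faithful transcription of the 1D computations into the normal direction, so I expect the difficulty to be organizational rather than structural: the global-conservation argument above is short, whereas establishing $D_{\mathcal{E}}^{\ell\err,-} + D_{\mathcal{E}}^{\err\ell,-} = F^\err - F^\ell$ with all contributions simultaneously present is the laborious core.
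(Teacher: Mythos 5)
Your proposal is correct and follows essentially the same route as the paper's proof: contract the scheme with $\p^{\ell}=\partial_{\q}\mathcal{E}^{\ell}$ to collapse the time derivatives, absorb the symmetric quadratic part of the dissipation into $T^{\ell}\Pi^{\ell\err,-}_{\normal}$ via the Roe matrix so that only $g_{\mathcal{E},\normal}^{\ell\err}$ survives, verify the face-wise identity $D_{\mathcal{E}}^{\ell\err,-}+D_{\mathcal{E}}^{\err\ell,-}=F^{\err}-F^{\ell}$ term group by term group using the tailored averages \eqref{eqn.sigma_2D}--\eqref{eqn.utildeJ_2D}, and conclude by telescoping. The only difference is organizational: the paper obtains the per-cell law first (defining $D_{\mathcal{E}}^{\ell\err,-}$ by collecting terms) and defers the face-wise flux-difference identity to the stability part, whereas you fold the two together, and you correctly flag the red-term bookkeeping as the laborious core, which is exactly where the bulk of the paper's proof lies.
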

\begin{proof}
	We start considering the contributions of the dot product of vector
	$$ \p^{\ell}=\partial_{\q} \mathcal{E}^{\ell}= \left( \partial_{\rho} \mathcal{E}^{\ell}, \partial_{\rho v_{i}} \mathcal{E}^{\ell}, \partial_{\rho S} \mathcal{E}^{\ell}, \partial_{A_{ik}} \mathcal{E}^{\ell}, \partial_{J_{k}} \mathcal{E}^{\ell} \right)^{T}$$
	with the time derivative terms in \eqref{eqn.schemeGPR2D}:
	\begin{gather}\label{eqn.Etime_2d}
		  \partial_{\rho} \mathcal{E}^{\ell} \frac{\partial \rho^{\ell}}{\partial t} 
		+ \partial_{\rho v_{i}} \mathcal{E}^{\ell} \frac{\partial \rho v_{i}^{\ell}}{\partial t}
		+ \partial_{\rho S} \mathcal{E}^{\ell} \frac{\partial \rho S^{\ell}}{\partial t} 
		+ \partial_{A_{ik}} \mathcal{E}^{\ell} \frac{\partial A_{ik}^{\ell}}{\partial t}
		+ \partial_{J_{k}} \mathcal{E}^{\ell} \frac{\partial J_{k}^{\ell}}{\partial t} 
		= \partial_{\q} \mathcal{E}^{\ell} \frac{\partial \q^{\ell}}{\partial t}
		= \frac{\partial \mathcal{E}^{\ell}}{\partial t}.
	\end{gather}

	We now define the fluctuations associated to the total energy equation as
	\begin{eqnarray}\label{eqn.Efluc_2d}
		D_{\mathcal{E}}^{\ell \err,-} & =  &\partial_{\rho}\mathcal{E}^{\ell}_{1} D_{\rho}^{\ell 
		\err,-}
		+ \partial_{\rho}\mathcal{E}^{\ell}_{2} D_{\rho}^{\ell \err,-}
		\textcolor{red}{+ \partial_{\rho}\mathcal{E}^{\ell}_{3} D_{\rho}^{\ell \err,-}}
		\textcolor{red}{+ \partial_{\rho}\mathcal{E}^{\ell}_{4} D_{\rho}^{\ell \err,-}}
		+ \partial_{\rho v_{i}}\mathcal{E}^{\ell} D_{\rho v_{i}}^{\ell \err,-} 
		\nonumber\\
		&&\textcolor{red}{+\partial_{\rho v_{i}}\mathcal{E}^{\ell}\left(\sigma_{ik}^{\ell\err,-}n_k 
		+\omega_{ik}^{\ell\err,-}n_k \right) } 
		+ \partial_{\rho S}\mathcal{E}^{\ell} D_{\rho S}^{\ell \err,-}
		\textcolor{red}{+ \partial_{\rho S}\mathcal{E}^{\ell}  \left(\beta_{k}^{\ell\err} - 
		\beta_{k}^{\ell}\right) n_k  }
		\nonumber\\
		&&\textcolor{red}{+ 
		\partial_{A_{ik}}\mathcal{E}^{\ell}\frac{1}{2\delta^{\ell\err}}A^{\ell\err}_{im}\left(v_{m}^{\err}-v_{m}^{\ell}
		 \right) 
		n_k }
		\textcolor{red}{+ \partial_{A_{ik}}\mathcal{E}^{\ell}  \frac{1}{2} 
		\tilde{u}_{A,\,\normal}^{\ell\err}   \left(A_{ik}^{\err}-A_{ik}^{\ell}\right)  }
		\nonumber\\
		&&\textcolor{red}{+ \partial_{J_{k}}\mathcal{E}^{\ell} 
		\frac{1}{2\delta^{\ell\err}}J^{\ell\err}_{i}\left(v_{m}^{\err}-v_{m}^{\ell} \right) n_k} 
		\textcolor{red}{+ \partial_{J_{k}}\mathcal{E}^{\ell} \frac{1}{2} 
		\tilde{u}_{J,\,\normal}^{\ell\err}   \left(J_{k}^{\err}-J_{k}^{\ell}\right) 
		+ \partial_{J_{k}}\mathcal{E}^{\ell}  \frac{1}{2} T^{\ell\err,-} n_k  }.
	\end{eqnarray}

	On the other hand, from \eqref{eqn.diffusion_2D} and applying relations analogous to the ones introduced in \eqref{eqn.E.diss.2} and \eqref{eqn.roeprop2}, we have
	\begin{eqnarray} 
		\frac{1}{\left|\Omega^{\ell}\right|}\! \sum_{\err\in N_{\ell}} \! \left|\partial\Omega^{\ell\err}\right|\left( \p^{\ell} \cdot \mathbf{P}^{\ell\err,-}_{\normal} + \p^{\ell} \cdot \g^{\ell\err}_{\normal}\right)  =
		\sum_{\err\in N_{\ell}} \!\frac{\left|\partial\Omega^{\ell\err}\right|}{\left|\Omega^{\ell}\right|}\left( \p^{\ell} \cdot \mathbf{P}^{\ell\err,-}_{\normal}+ \p^{\ell} \cdot
		\epsilon^{\ell\err}\frac{\Delta \q^{\ell\err}}{\delta^{\ell\err}}\right)  
		\nonumber\\
		\!=\!\sum_{\err\in N_{\ell}} \! \frac{\left|\partial\Omega^{\ell\err}\right|}{\left|\Omega^{\ell}\right|} \! \left( \p^{\ell}\! \cdot \mathbf{P}^{\ell\err,-}_{\normal}
		\!+\! \halb \p^{\ell}\! \cdot \epsilon^{\ell\err}\frac{\Delta \q^{\ell\err}}{\delta^{\ell\err}} 
		\!+\! \halb \p^{\err}\! \cdot \epsilon^{\ell\err}\frac{\Delta \q^{\ell\err}}{\delta^{\ell\err}} 
		\!+\! \halb\p^{\ell}\! \cdot \epsilon^{\ell\err}\frac{\Delta \q^{\ell\err}}{\delta^{\ell\err}} 
		\!-\! \halb \p^{\err}\! \cdot \epsilon^{\ell\err}\frac{\Delta \q^{\ell\err}}{\delta^{\ell\err}}\right)
		\nonumber\\
		=\!\sum_{\err\in N_{\ell}} \!\frac{\left|\partial\Omega^{\ell\err}\right|}{\left|\Omega^{\ell}\right|}\left( \p^{\ell} \cdot \mathbf{P}^{\ell\err,-}_{\normal}
		+ \halb \left( \p^{\ell}+\p^{\err}\right)  \cdot \epsilon^{\ell\err}\frac{\Delta \q^{\ell\err}}{\delta^{\ell\err}} 
		- \halb\left(\p^{\err} -\p^{\ell}\right)  \cdot \epsilon^{\ell\err}\frac{\Delta \q^{\ell\err}}{\delta^{\ell\err}} \right) \nonumber \\
		=\!\sum_{\err\in N_{\ell}} \!\frac{\left|\partial\Omega^{\ell\err}\right|}{\left|\Omega^{\ell}\right|}\left( \p^{\ell} \cdot \mathbf{P}^{\ell\err,-}_{\normal}
		+ \epsilon^{\ell\err}\frac{\Delta \mathcal{E}^{\ell\err}}{\delta^{\ell\err}} 
		- \halb\epsilon^{\ell\err}\frac{\Delta \q^{\ell\err}}{\delta^{\ell\err}} 
		\partial^2_{\q\q}\mathcal{E}^{\ell\err}\Delta \q^{\ell\err} \right)\!. 
		\label{eqn.diss.2d} 		 
	\end{eqnarray}
	Substitution of $\mathbf{P}^{\ell\err,-}_{\normal} = \left(0,\mathbf{0},\Pi^{\ell\err,-}_{\normal},\mathbf{0},\mathbf{0}\right)$  combined with \eqref{eqn.production_2D} yields
	\begin{equation}\label{eqn.diff_2D}
		\frac{1}{\left|\Omega^{\ell}\right|}\! \sum_{\err\in N_{\ell}}   \!  \left|\partial\Omega^{\ell\err}\right| \left( \p^{\ell} \cdot  \g_{\normal}^{\ell\err} +  \p^{\ell} \cdot \mathbf{P}^{\ell\err,-}_{\normal} \right) 		 
		= \frac{1}{\left|\Omega^{\ell}\right|} \!\sum_{\err\in N_{\ell}}\! \left|\partial\Omega^{\ell\err}\right|  \epsilon^{\ell\err}\frac{\Delta \mathcal{E}^{\ell\err}}{\delta^{\ell\err}} 
		= \frac{1}{\left|\Omega^{\ell}\right|}\! \sum_{\err\in N_{\ell}}\! \left|\partial\Omega^{\ell\err}\right| g_{\mathcal{E},\,\normal}^{\ell\err}.
	\end{equation}
	Finally, taking into account the dot product of $\p^{\ell}$ by the diffusion terms in \eqref{eqn.schemeGPR2D} and applying \eqref{eqn.diff_2D},
	we get
	\begin{eqnarray} \label{eqn.Ediff_2D}
		\p^{\ell} \cdot \frac{1}{\left|\Omega^{\ell}\right|}\! \sum_{\err\in N_{\ell}} \!    \left|\partial\Omega^{\ell\err}\right| \g_{\normal}^{\ell\err} 
		+\partial_{\rho S} \mathcal{E}^{\ell} \frac{1}{\left|\Omega^{\ell}\right|}\! \sum_{\err\in N_{\ell}}\! 
			\left|\partial\Omega^{\ell\err}\right| \Pi^{\ell\err}_{\normal}
		\\ %
		\qquad\qquad= \!\sum_{\err\in N_{\ell}} \! \frac{\left|\partial\Omega^{\ell\err}\right|}{\left|\Omega^{\ell}\right|}  \left( 
		\p^{\ell} \cdot \epsilon^{\ell\err}\frac{\Delta \q^{\ell\err}}{\delta^{\ell\err}}
		 + \partial_{\rho S} \mathcal{E}^{\ell}  \frac{1}{4}\epsilon^{\ell\err} \frac{\Delta \q^{\ell\err}}{T^{\ell}} \partial^2_{\q\q} \mathcal{E}^{\ell\err} \frac{\Delta \q^{\ell\err}}{\delta^{\ell\err}} \right)\!= \!\frac{1}{\left|\Omega^{\ell}\right|}\! \sum_{\err\in N_{\ell}}\! \left|\partial\Omega^{\ell\err}\right| g_{\mathcal{E},\,\normal}^{\ell\err}.
		 \nonumber 		 
	\end{eqnarray}

	From \eqref{eqn.Etime_2d}, \eqref{eqn.Efluc_2d}, \eqref{eqn.Ediff_2D} and noting that the dot product of $ \partial_{\q} \mathcal{E}^{\ell}$ by the green terms in \eqref{eqn.schemeGPR2D_mass}-\eqref{eqn.schemeGPR2D_heatf} is zero, we conclude
	\begin{equation*}
		\frac{\partial \mathcal{E}^{\ell}}{\partial t} = 
		-\frac{1}{\left|\Omega^{\ell}\right|} \sum_{\err\in N_{\ell}} \left|\partial\Omega^{\ell\err}\right| D_{\mathcal{E}}^{\ell \err,-}
		\textcolor{blue}{+ \frac{1}{\left|\Omega^{\ell}\right|} \sum_{\err\in N_{\ell}} \left|\partial\Omega^{\ell\err}\right| g_{\mathcal{E},\,\normal}^{\ell\err}}.
	\end{equation*}

	The second part of the proof concerns marginal stability. Integration of equation \eqref{eqn.schemeGPR2D_energy} over the computational domain $\Omega$ gives
	\begin{equation*}
		\int_{\Omega} \frac{\partial \mathcal{E}^{\ell}}{\partial t} dV
		= \sum_{\ell} \left| \Omega^{\ell}\right| \frac{\partial \mathcal{E}^{\ell}}{\partial t}
		= -\sum_{\ell} \sum_{\err\in N_{\ell}} \left|\partial\Omega^{\ell\err}\right| D_{\mathcal{E}}^{\ell \err,-}
		\textcolor{blue}{+ \sum_{\ell} \sum_{\err\in N_{\ell}} \left|\partial\Omega^{\ell\err}\right| g_{\mathcal{E},\,\normal}^{\ell\err}}.
	\end{equation*}
	Assuming that the solution on the boundaries of the domain tends to a constant value, the jumps on $\q$ become zero at $\partial \Omega$ and fluctuations and dissipative terms vanish. Besides, the remaining dissipative terms can be seen as a telescopic sum which cancels.
	Reordering of the first summation in the right hand side of the former equation to cluster the contributions at each face we obtain
	\begin{equation*}
		\int_{\Omega} \frac{\partial \mathcal{E}^{\ell}}{\partial t} dV
		= \sum_{\ell} \left| \Omega^{\ell}\right| \frac{\partial \mathcal{E}^{\ell}}{\partial t} = -\sum_{\ell\err} \left|\partial\Omega^{\ell\err}\right|  \left(D_{\mathcal{E}}^{\ell \err,-}
		+  D_{\mathcal{E}}^{ \err \ell,-}\right) .
	\end{equation*}
	Consequently, marginal stability is proven given that the contributions of fluctuations in the interior cell boundaries cancel.
	Let us focus on a face $\partial \Omega^{\ell\err}$. We start analysing the terms corresponding 
	with the Godunov formalism (black terms) in \eqref{eqn.Efluc_2d}:
	\begin{eqnarray}
		  \partial_{\rho}\mathcal{E}^{\ell}_{1} D_{\rho}^{\ell \err,-}
		+ \partial_{\rho}\mathcal{E}^{\ell}_{2} D_{\rho}^{\ell \err,-}		
		+ \partial_{\rho v_{i}}\mathcal{E}^{\ell} D_{\rho v_{i}}^{\ell \err,-}
		+ \partial_{\rho S}\mathcal{E}^{\ell} D_{\rho S}^{\ell \err,-} \nonumber \\
		+ \partial_{\rho}\mathcal{E}^{\err}_{1} D_{\rho}^{ \err\ell,-}
		+ \partial_{\rho}\mathcal{E}^{\err}_{2} D_{\rho}^{ \err\ell,-}		
		+ \partial_{\rho v_{i}}\mathcal{E}^{\err} D_{\rho v_{i}}^{ \err\ell,-} 
		+ \partial_{\rho S}\mathcal{E}^{\err} D_{\rho S}^{ \err\ell,-}
		\nonumber \\
		= -\left( \partial_{\rho}\mathcal{E}^{\err}_{1} - \partial_{\rho}\mathcal{E}^{\ell}_{1} 
		\right) 
		f_{\rho,\,k}^{\ell\err} n_k
		-\left( \partial_{\rho}\mathcal{E}^{\err}_{2} - \partial_{\rho}\mathcal{E}^{\ell}_{2} 
		\right) 
		f_{\rho,\,k}^{\ell\err} n_k
		-\left( \partial_{\rho v_{i}}\mathcal{E}^{\err} - \partial_{\rho v_{i}}\mathcal{E}^{\ell} 
		\right) f_{\rho 
		v_{i},\,k}^{\ell\err} n_k
		\nonumber \\
		-\left( \partial_{\rho S}\mathcal{E}^{\err} - \partial_{\rho S}\mathcal{E}^{\ell} \right) 
		f_{\rho 
		S,\,k}^{\ell\err} n_k
		-\partial_{\rho}\mathcal{E}^{\ell}_{1} f_{\rho,\,k}^{\ell}n_k 
		-\partial_{\rho}\mathcal{E}^{\ell}_{2} f_{\rho,\,k}^{\ell}n_k 
		-\partial_{\rho v_{i}}\mathcal{E}^{\ell} f_{\rho v_{i},\,k}^{\ell}n_k
		\nonumber \\
		-\partial_{\rho S}\mathcal{E}^{\ell} f_{\rho S,\,k}^{\ell}n_k
		+\partial_{\rho}\mathcal{E}^{\err}_{1} f_{\rho,\,k}^{\err}n_k
		+\partial_{\rho}\mathcal{E}^{\err}_{2} f_{\rho,\,k}^{\err}n_k
		+\partial_{\rho v_{i}}\mathcal{E}^{\err} f_{\rho v_{i},\,k}^{\err}n_k
		+\partial_{\rho S}\mathcal{E}^{\err} f_{\rho S,\,k}^{\err}n_k
		\nonumber \\
		= -\left(\p^{\err} -\p^{\ell}\right) \cdot f_{\q,\,k}^{\ell\err}n_k + \p^{\err} \cdot 
		f_{\q,\,k}^{\err}n_k - \p^{\ell} \cdot f_{\q,\,k}^{\ell}n_k
		\nonumber \\
		= \left( \p^{\err} \cdot f_{\q,\,k}^{\err} - (v_{k}L)^{\err} \right) n_k 
		 -\left( \p^{\ell} \cdot f_{\q,\,k}^{\ell} - (v_{k}L)^{\ell} \right) n_k
		= F^{\err}_{G} - F^{\ell}_{G}, \label{eqn.mstabilityblack_2D}
	\end{eqnarray}
with $ F_{G} $ standing for the black terms in the energy flux in \eqref{eqn.energy}.
	Regarding the red terms in \eqref{eqn.Efluc_2d}, we have
	\begin{eqnarray}
		\textcolor{black}{  \partial_{\rho}\mathcal{E}^{\ell}_{3} D_{\rho}^{\ell \err,-}}
		\textcolor{black}{+ \partial_{\rho}\mathcal{E}^{\ell}_{4} D_{\rho}^{\ell \err,-}}
		\textcolor{black}{+ \partial_{\rho 
		v_{i}}\mathcal{E}^{\ell}\left(\sigma_{ik}^{\ell\err,-}n_k 
		+\omega_{ik}^{\ell\err,-}n_k \right) } 
		\textcolor{black}{+ \partial_{\rho S}\mathcal{E}^{\ell}  \left(\beta_{k}^{\ell\err} - 
		\beta_{k}^{\ell}\right) n_k  }
		\nonumber\\
		\textcolor{black}{+ 
		\partial_{A_{ik}}\mathcal{E}^{\ell}\frac{1}{2\delta^{\ell\err}}A^{\ell\err}_{im}\left(v_{m}^{\err}-v_{m}^{\ell}
		 \right) 
		n_k }
		\textcolor{black}{+ \partial_{A_{ik}}\mathcal{E}^{\ell}  \frac{1}{2} 
		\tilde{u}_{A,\,\normal}^{\ell\err}   \left(A_{ik}^{\err}-A_{ik}^{\ell}\right)  }
		\nonumber\\
		\textcolor{black}{+ \partial_{J_{k}}\mathcal{E}^{\ell} 
		\frac{1}{2\delta^{\ell\err}}J^{\ell\err}_{i}\left(v_{m}^{\err}-v_{m}^{\ell} \right) n_k} 
		\textcolor{black}{+ \partial_{J_{k}}\mathcal{E}^{\ell} \frac{1}{2} 
		\tilde{u}_{J,\,\normal}^{\ell\err}   \left(J_{k}^{\err}-J_{k}^{\ell}\right)   }
		\textcolor{black}{+ \partial_{J_{k}}\mathcal{E}^{\ell}  \frac{1}{2} T^{\ell\err,-} n_k 
		\nonumber} \\
		\textcolor{black}{+ \partial_{\rho}\mathcal{E}^{\err}_{3} D_{\rho}^{\err\ell,-}}
		\textcolor{black}{+ \partial_{\rho}\mathcal{E}^{\err}_{4} D_{\rho}^{\err\ell,-}}
		\textcolor{black}{-\partial_{\rho v_{i}}\mathcal{E}^{\err}\left(\sigma_{ik}^{\err\ell,-}n_k 
		+\omega_{ik}^{\err\ell,-}n_k \right) } 
		\textcolor{black}{- \partial_{\rho S}\mathcal{E}^{\err}  \left(\beta_{k}^{\err\ell} - 
		\beta_{k}^{\err}\right) n_k  }
		\nonumber\\
		\textcolor{black}{- 
		\partial_{A_{ik}}\mathcal{E}^{\err}\frac{1}{2}A^{\err\ell}_{im}\left(v_{m}^{\ell}-
		 v_{m}^{\err}\right) 
		n_k }
		\textcolor{black}{+ \partial_{A_{ik}}\mathcal{E}^{\err}  \frac{1}{2} 
		\tilde{u}_{A,\,\normal}^{\err\ell}   \left(A_{ik}^{\ell}-A_{ik}^{\err}\right)  }
		\nonumber\\
		\textcolor{black}{- \partial_{J_{k}}\mathcal{E}^{\err} 
		\frac{1}{2}J^{\err\ell}_{i}\left(v_{m}^{\ell}- v_{m}^{\err}\right) n_k} 
		\textcolor{black}{+ \partial_{J_{k}}\mathcal{E}^{\err} \frac{1}{2} 
		\tilde{u}_{J,\,\normal}^{\err\ell}   \left(J_{k}^{\ell}-J_{k}^{\err}\right)   }
		\textcolor{black}{- \partial_{J_{k}}\mathcal{E}^{\err}  \frac{1}{2} T^{\err\ell,-}n_k }. 
		\nonumber
	\end{eqnarray}
	Substitution of $\partial_{\q} \mathcal{E}$ by its expression in state variables, $\q$, together with 
	\eqref{eqn.fluctuations_2D} yields
	\begin{eqnarray}
		\textcolor{black}{E^{\ell}_{3} \left(  f^{\ell\err}_{\rho,\,k}-\! f^{\ell}_{\rho,\,k} 
		\right)\! n_k }
		\textcolor{black}{- E^{\err}_{3}  \left(  f^{\ell\err}_{\rho,\,k} -\! 
		f^{\err}_{\rho,\,k}\right)\! n_k}
		\textcolor{black}{+ \alpha_{ik}^{\ell}  \frac{1}{2} \tilde{u}_{A,\,\normal}^{\ell\err}   \left(A_{ik}^{\err}- \! A_{ik}^{\ell}\right)  }
		\textcolor{black}{+ \alpha_{ik}^{\err}  \frac{1}{2} \tilde{u}_{A,\,\normal}^{\err\ell}   \left(A_{ik}^{\ell}- \! A_{ik}^{\err}\right)  }
		\nonumber\\
		\textcolor{black}{+  E^{\ell}_{4} \left(  f^{\ell\err}_{\rho,\,k}- f^{\ell}_{\rho,\,k} 
		\right)n_k }
		\textcolor{black}{- E^{\err}_{4}  \left(  f^{\ell\err}_{\rho,\,k} - 
		f^{\err}_{\rho,\,k}\right) n_k}
		\textcolor{black}{+ \beta_{k}^{\ell} \frac{1}{2} \tilde{u}_{J,\,\normal}^{\ell\err}   \left(J_{k}^{\err}-J_{k}^{\ell}\right)   }
		\textcolor{black}{+ \beta_{k}^{\err} \frac{1}{2} \tilde{u}_{J,\,\normal}^{\err\ell}   \left(J_{k}^{\ell}-J_{k}^{\err}\right)   }
		\nonumber\\
		\textcolor{black}{+\left[ v_{i}^{\ell}\! \left(\sigma_{ik}^{\ell\err}-\!\sigma_{ik}^{\ell} \right) 
			-v_{i}^{\err}\! \left(\sigma_{ik}^{\ell\err}-\!\sigma_{ik}^{\err} \right)
			+ \alpha_{ik}^{\ell}\frac{1}{2}A^{\ell\err}_{im}\! \left(v_{m}^{\err}-\! v_{m}^{\ell} \right) 
			- \alpha_{ik}^{\err}\frac{1}{2}A^{\err\ell}_{im}\! 
			\left(v_{m}^{\ell}-\! v_{m}^{\err}\right)\right]  n_k }
		\nonumber\\
		\textcolor{black}{+\left[ v_{i}^{\ell} \left( \omega_{ik}^{\ell\err}-\omega_{ik}^{\ell} \right) 
			-v_{i}^{\err}\left(\omega_{ik}^{\ell\err}-\omega_{ik}^{\err} \right)
			+ \beta_{k}^{\ell} \frac{1}{2}J^{\ell\err}_{i}\left(v_{m}^{\err}-v_{m}^{\ell} \right)
			- \beta_{k}^{\err} \frac{1}{2}J^{\err\ell}_{i}\left(v_{m}^{\ell}- 
			v_{m}^{\err}\right)\right]   n_k}
		\nonumber\\
		\textcolor{black}{+ \left[ T^{\ell} \left( \beta_{k}^{\ell\err} - \beta_{k}^{\ell} \right)   
			-T^{\err}  \left( \beta_{k}^{\ell\err}  - \beta_{k}^{\err} \right)  
			+ \beta_{k}^{\ell}  \frac{1}{2} T^{\ell\err,-} 
			- \beta_{k}^{\err}  \frac{1}{2} T^{\err\ell,-}\right] n_k }.	\nonumber	
	\end{eqnarray}
	Taking into account \eqref{eqn.sigma_2D}-\eqref{eqn.utildeJ_2D} and collecting terms gives
	\begin{eqnarray}
		\textcolor{black}{\left( \rho v_{k}^{\err}E^{\err}_{3} 		
		+ \rho v_{k}^{\err}E^{\err}_{4} 
		+ v_{i}^{\err}\sigma_{ik}^{\err}
		+ v_{i}^{\err}\omega_{ik}^{\err}
		+  \beta_{k}^{\err}T^{\err}\right)  n_k} 
		\nonumber\\ 
		-\textcolor{black}{\left( \rho v_{k}^{\ell}E^{\ell}_{3}	 
		+ \rho v_{k}^{\ell}E^{\ell}_{4} 
		+ v_{i}^{\err}\sigma_{ik}^{\err}
		+ v_{i}^{\err}\omega_{ik}^{\err}
		+ \beta_{k}^{\ell}T^{\ell}\right)n_k  }.	\label{eqn.mstabilityred_2D}
	\end{eqnarray}
	Gathering \eqref{eqn.mstabilityblack_2D} and \eqref{eqn.mstabilityred_2D}, we obtain
	\begin{eqnarray}
		D_{\mathcal{E}}^{\ell \err,-} +  D_{\mathcal{E}}^{ \err \ell,-} &=& F^{\err}_{G}  + 
		\textcolor{red}{\left( \rho v_{k}^{\err}E^{\err}_{3} 		
			+ \rho v_{k}^{\err}E^{\err}_{4} 
			+ v_{i}^{\err}\sigma_{ik}^{\err}
			+ v_{i}^{\err}\omega_{ik}^{\err}
			+  \beta_{k}^{\err}T^{\err}\right)  n_k} -
		\nonumber\\ 
		&&  F^{\ell}_{G} -\textcolor{red}{\left( \rho v_{k}^{\ell}E^{\ell}_{3}	 
			+ \rho v_{k}^{\ell}E^{\ell}_{4} 
			+ v_{i}^{\ell}\sigma_{ik}^{\ell}
			+ v_{i}^{\ell}\omega_{ik}^{\ell}
			+ \beta_{k}^{\ell}T^{\ell}\right)n_k  }
		%\nonumber\\
		=  F^{\err}-F^{\ell}.
	\end{eqnarray}
	Thus the fluctuations can be seen as the difference between fluxes which will cancel out
	when adding the contributions of all cells, and hence the scheme is marginally stable in the 
	energy 
	norm, as claimed:
	\begin{equation}
		\int_{\Omega} \frac{\partial \mathcal{E}^{\ell}}{\partial t} dV
		= \sum_{\ell} | \Omega^{\ell} | \frac{\partial \mathcal{E}^{\ell}}{\partial t} = -\sum_{\ell\err} \left( D_{\mathcal{E}}^{\ell \err,-} +  D_{\mathcal{E}}^{ \ell\err,+}\right)  
		= -\sum_{\ell\err}\left(F^{\err}-F^{\ell} \right) = 0.
	\end{equation}
\end{proof}	

%\clearpage 

\begin{theorem}
Assuming $T^{\ell} >0$ and $\boldsymbol{\mathcal{H}}^{\ell\err}=\partial^2_{\q \q} \mathcal{E}^{\ell\err} \geq 0$ the semi-discrete finite volume scheme \eqref{eqn.schemeGPR2D} with production term \eqref{eqn.production_2D} satisfies the semi-discrete cell entropy inequality 
\begin{equation}
	\frac{\partial \rho S^{\ell}}{\partial t}  + \sum_{\err\in N_{\ell}}
	\frac{\left|\partial\Omega^{\ell\err}\right|}{\left|\Omega^{\ell}\right|}  D_{\rho S}^{\ell \err,-}
	+ \sum_{\err\in N_{\ell}}  
	\frac{\left|\partial\Omega^{\ell\err}\right|}{\left|\Omega^{\ell}\right|} \left( 
	\beta_{k}^{\ell\err}-\beta_{k}^{\ell} \right)n_k   
	 {-  \sum_{\err\in N_{\ell}}  \frac{\left|\partial\Omega^{\ell\err}\right|}{\left|\Omega^{\ell}\right|} g_{\rho S,\,\normal}^{\ell\err}}
	 \geq 0.   \label{eqn.schemeGPR2D_entropy.ineq}
\end{equation} 
\end{theorem}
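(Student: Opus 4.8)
The plan is to obtain the inequality directly from the discrete entropy evolution equation \eqref{eqn.schemeGPR2D_entropy} by a simple rearrangement, since all of the transport-type terms appearing on the left-hand side of \eqref{eqn.schemeGPR2D_entropy.ineq} are already isolated there. Concretely, the left-hand side of \eqref{eqn.schemeGPR2D_entropy.ineq} collects exactly the time derivative $\partial_t(\rho S^\ell)$, the convective fluctuation $D_{\rho S}^{\ell\err,-}$, the non-conservative contribution $(\beta_k^{\ell\err}-\beta_k^\ell)n_k$, and the parabolic flux $g_{\rho S,\normal}^{\ell\err}$. Moving these four groups of terms to the left in \eqref{eqn.schemeGPR2D_entropy} shows that the quantity in \eqref{eqn.schemeGPR2D_entropy.ineq} equals the remaining three source contributions,
\begin{equation*}
\sum_{\err\in N_{\ell}}\frac{|\partial\Omega^{\ell\err}|}{|\Omega^{\ell}|}\,\Pi_{\normal}^{\ell\err,-} + \frac{\alpha_{ik}^{\ell}\alpha_{ik}^{\ell}}{\theta_1^{\ell}(\tau_1)\,T^{\ell}} + \frac{\beta_{i}^{\ell}\beta_{i}^{\ell}}{\theta_2^{\ell}(\tau_2)\,T^{\ell}},
\end{equation*}
so that it suffices to prove that each of these three terms is non-negative under the stated hypotheses.

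For the viscous production term I would substitute its definition \eqref{eqn.production_2D}, rewriting it as the quadratic form $\Pi_{\normal}^{\ell\err,-} = \tfrac{\epsilon^{\ell\err}}{2\,T^{\ell}\,\delta^{\ell\err}}\,\Delta\q^{\ell\err}\cdot\boldsymbol{\mathcal{H}}^{\ell\err}\,\Delta\q^{\ell\err}$. Here the scalar prefactor is non-negative because $\epsilon^{\ell\err}\geq 0$ holds by construction of the numerical viscosity \eqref{eqn.viscosity}, the cell-center distance satisfies $\delta^{\ell\err}>0$, and $T^{\ell}>0$ by assumption, while the quadratic part is non-negative precisely because the Roe-averaged Hessian $\boldsymbol{\mathcal{H}}^{\ell\err}=\partial^2_{\q\q}\mathcal{E}^{\ell\err}$ is assumed positive semi-definite. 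Hence $\Pi_{\normal}^{\ell\err,-}\geq 0$ for every $\err\in N_\ell$, and summing against the non-negative geometric weights $|\partial\Omega^{\ell\err}|/|\Omega^{\ell}|$ preserves the sign.

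The two relaxation contributions are even more immediate: $\alpha_{ik}^{\ell}\alpha_{ik}^{\ell}$ and $\beta_i^{\ell}\beta_i^{\ell}$ are sums of squares, hence non-negative, while the denominators $\theta_1^{\ell}(\tau_1)$, $\theta_2^{\ell}(\tau_2)$ and $T^{\ell}$ are all strictly positive by the model assumptions $\theta_1,\theta_2>0$ together with the hypothesis $T^{\ell}>0$. Both green terms are therefore non-negative, and adding them to the non-negative viscous contribution yields a non-negative right-hand side, which is exactly the claimed cell entropy inequality \eqref{eqn.schemeGPR2D_entropy.ineq}.

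I expect the proof to be structural rather than computational, since the genuine work was already performed when the production term $\mathbf{P}^{\ell}$ was fixed through the compatibility condition \eqref{eqn.production.condition}: no further manipulation of fluxes is required here. The only delicate point — and the nearest thing to an obstacle — is justifying that $\Pi_{\normal}^{\ell\err,-}$ is a genuine non-negative quadratic form, which rests entirely on the positive semi-definiteness of the discrete Hessian $\boldsymbol{\mathcal{H}}^{\ell\err}$; were this hypothesis relaxed, one would have to control the sign of $\Delta\q^{\ell\err}\cdot\boldsymbol{\mathcal{H}}^{\ell\err}\,\Delta\q^{\ell\err}$ by other means, for instance through an entropy-variable formulation guaranteeing definiteness of the averaged Hessian.
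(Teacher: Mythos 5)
Your proposal is correct and follows essentially the same route as the paper: rearrange the discrete entropy equation \eqref{eqn.schemeGPR2D_entropy} so that the left-hand side of \eqref{eqn.schemeGPR2D_entropy.ineq} equals the weighted sum of $\Pi^{\ell\err,-}_{\normal}$ plus the two relaxation source terms, then conclude non-negativity from $\epsilon^{\ell\err}\geq 0$, $\delta^{\ell\err}>0$, $T^{\ell}>0$, $\theta_{1,2}^{\ell}>0$ and the positive semi-definiteness of $\boldsymbol{\mathcal{H}}^{\ell\err}$. The paper's proof is exactly this one-line rearrangement and sign check, so nothing further is needed.
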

\begin{proof}
The proof is an immediate consequence of the discretization \eqref{eqn.schemeGPR2D_entropy} with \eqref{eqn.production_2D}:  
\begin{eqnarray}
\frac{\partial \rho S^{\ell}}{\partial t}  + \sum_{\err\in N_{\ell}}
\frac{\left|\partial\Omega^{\ell\err}\right|}{\left|\Omega^{\ell}\right|}  D_{\rho S}^{\ell \err,-}
+ \sum_{\err\in N_{\ell}}  
\frac{\left|\partial\Omega^{\ell\err}\right|}{\left|\Omega^{\ell}\right|} \left( 
\beta_{k}^{\ell\err}-\beta_{k}^{\ell} \right)n_k   
{-  \sum_{\err\in N_{\ell}}  \frac{\left|\partial\Omega^{\ell\err}\right|}{\left|\Omega^{\ell}\right|} g_{\rho S,\,\normal}^{\ell\err}} && \nonumber \\ 
= \textcolor{blue}{\frac{1}{\left|\Omega^{\ell}\right|} \sum_{\err\in N_{\ell}} 
	\left|\partial\Omega^{\ell\err}\right| \Pi^{\ell\err,-}_{\normal}}
\textcolor{darkgreen}{+\frac{\alpha_{ik}^{\ell}\alpha_{ik}^{\ell}}{\theta_1^{\ell}\left(\tau_{1}\right) T^{\ell} } }  
\textcolor{darkgreen}{+\frac{\beta_{i}^{\ell}\beta_{i}^{\ell}}{\theta_2^{\ell}\left(\tau_{2}\right) T^{\ell} } } \geq 0, && \label{eqn.schemeGPR2D_entropy2}
\end{eqnarray} 	
since $\Pi^{\ell\err,-}_{\normal} = \frac{1}{2}\epsilon^{\ell\err} \frac{\Delta \q^{\ell\err}}{T^{\ell}} \partial^2_{\q \q} \mathcal{E}^{\ell\err} \frac{\Delta \q^{\ell\err}}{\delta^{\ell\err}} \geq 0$ due to $\partial^2_{\q \q} \mathcal{E} \geq 0$ and $\theta_{1,2}^{\ell} > 0$ as well as $T^{\ell} > 0$.  		
\end{proof}

%%%%%%%%%%%%%%%%%%%%%%%%%%%%%%%%%%%%%%%%%%%%%%%%%%%%%%%%%%%%%%%%%%
\section{Thermodynamically compatible fully-discrete finite volume scheme for the Euler subsystem}
\label{sec.scheme.fd}

In this section we present a \textit{fully-discrete} finite volume scheme for the Euler subsystem 
of \eqref{eqn.GPR}, i.e. for the black and blue terms. For simplicity, we restrict the 
considerations to one space dimension. As before, the spatial control volumes are denoted by 
$\Omega^{\ell} = [x^{\ell-\halb}, x^{\ell+\halb}]$. The scheme reads  
\begin{equation} 
\label{eqn.fluct.fd} 
 \frac{\q^{n+1,\ell}-\q^{n,\ell}}{\Delta t}   = 
 - \frac{ (\f^{\ell+\halb}_{\tilde{\p}} - \f^{\ell}) - 
 	( \f^{\ell-\halb}_{\tilde{\p}} - \f^{\ell}) }{\Delta x} 
 +
 \textcolor{blue}{
 \frac{\g^{\ell+\halb}_{\tilde{\p}} - \g^{\ell+\halb}_{\tilde{\p}}}{\Delta x}
 + \tilde{\mathbf{P}}^{\ell}},
\end{equation}
Again, the subscript $ \tilde{\p} $ refers to the fact that the flux is evaluated using the segment path in $ 
\p $ variables 
defined below, similar to \eqref{eqn.path1_sd1d}-\eqref{eqn.p.scheme_sd1d}.
In order to construct a thermodynamically compatible \textit{fully-discrete} scheme, where the total energy conservation law \eqref{eqn.energy} is a \textit{consequence} of the discrete equations \eqref{eqn.fluct.fd}, we introduce a new average quantity $\tilde{\p}^{\ell}$. Since by construction one has 
\begin{equation}
\int \limits_{\q^{n,\ell}}^{\q^{n+1,\ell}} \partial_{\q} \mathcal{E} \cdot d\q = \mathcal{E}^{n+1,\ell}  - \mathcal{E}^{n,\ell},   
\end{equation}
for any path connecting $\q^{n,\ell}$ with $\q^{n+1,\ell}$, we define the quantity $\tilde{\p}^{\ell}$ as 
\begin{equation}
\label{eqn.ptilde.def} 
\tilde{\p}^{\ell} = \int \limits_0^1 \partial_{\q} \mathcal{E}(\boldsymbol{\tau}(s)) ds,   
\end{equation}
with the straight-line segment path 
\begin{equation}
\label{eqn.timepath} 
\boldsymbol{\tau} = \boldsymbol{\tau}(s) = \q^{n,\ell} + s \left( \q^{n+1,\ell}  - \q^{n,\ell} \right). 
\end{equation}
Therefore, $\tilde{\p}^{\ell}$ satisfies the Roe-type property  
\begin{equation}
\tilde{\p}^{\ell} \cdot \left( \q^{n+1,\ell}  - \q^{n,\ell} \right) = \mathcal{E}^{n+1,\ell}  - \mathcal{E}^{n,\ell},  
\label{eqn.roeprop.fd}  
\end{equation}
which is fundamental for the construction of our thermodynamically compatible fully-discrete scheme. 
We now multiply \eqref{eqn.fluct.fd} with $\tilde{\p}^{\ell}$ from the left and neglecting the viscous fluxes $\g^{\ell \pm \halb}$ leads to 
\begin{equation} 
\label{eqn.fluct.pt} 
\tilde{\p}^{\ell} \cdot \frac{\q^{n+1,\ell}  - \q^{n,\ell}}{\Delta t} = \frac{\mathcal{E}^{n+1,\ell}  - \mathcal{E}^{n,\ell}}{\Delta t}   = - \tilde{\p}^{\ell} \cdot \frac{ (\f^{\ell+\halb}_{\tilde{\p}} - \f^{\ell}) + ( \f^{\ell} - \f^{\ell-\halb}_{\tilde{\p}} ) }{\Delta x}.   
\end{equation}
To obtain a \textit{conservative} form of the fully discrete energy conservation law, we  require  
\begin{equation}
	\label{eqn.aux1.fd}
\tilde{\p}^{\ell} \cdot ( \f^{\ell+\halb}_{\tilde{\p}}  - \f^{\ell} ) + \tilde{\p}^{\ell+1}  \cdot (\f^{\ell+1} - \f^{\ell+\halb}_{\tilde{\p}} ) = \tilde{F}^{\ell+1} - \tilde{F}^{\ell}. 
\end{equation} 
Using the parametrization \eqref{eqn.par0} and the associated relations \eqref{eqn.par02} we get
\begin{equation} 
- \partial_{\p} (\widetilde{v_1 L})^{\ell+\halb} \cdot \left( \tilde{\p}^{\ell+1} - \tilde{\p}^{\ell} 
\right)  + \tilde{\p}^{\ell+1} \cdot \f^{\ell+1} - \tilde{\p}^{\ell} \cdot \f^{\ell} = 
\tilde{\p}^{\ell+1} \cdot \f^{\ell+1} - \widetilde{v_1 L}^{\ell+1} - \tilde{\p}^{\ell} \cdot \f^{\ell} + \widetilde{v_1 L}^{\ell}.  
\end{equation} 
Hence, the numerical flux $\f^{\ell+\halb}_{\tilde{\p}} = \partial_{\p} (\widetilde{v_1 L})^{\ell+\halb} $ 
must satisfy 
the following jump condition: 
%in terms of the $\tilde{\p}$ variables:  
\begin{equation}
\label{eqn.fluxcondition}
\f^{\ell+\halb}_{\tilde{\p}} \cdot \left( \tilde{\p}^{\ell+1} - \tilde{\p}^{\ell} \right)  =  
\partial_{\p} (\widetilde{v_1 L})^{\ell+\halb} \cdot \left( \tilde{\p}^{\ell+1} - \tilde{\p}^{\ell} \right) 
= \widetilde{v_1 L}^{\ell+1} - \widetilde{v_1 L}^{\ell}. 
\end{equation} 
We choose again a simple straight line segment path, this time in the $\tilde{\p}$ 
variables: 
	\begin{equation} 
	\label{eqn.path1.tilde} 
	\boldsymbol{\psi}(s) = 
	\boldsymbol{\psi}(\tilde{\p}^{\ell},\tilde{\p}^{\ell+1},s) = \tilde{\p}^{\ell} + s \left( \tilde{\p}^{\ell+1} - \tilde{\p}^{\ell} \right), \qquad 0 \leq s \leq 1,
	\end{equation}  
Using the same reasoning as for the semi-discrete scheme \eqref{eqn.pathint_sd1d}-\eqref{eqn.p.scheme.diss}, we find the thermodynamically compatible numerical flux of the \textit{fully discrete} $\p$-scheme as 
	\begin{equation}
	\label{eqn.p.scheme}
	\f^{\ell+\halb}_{\tilde{\p}} = 
	\left( f^{\ell+\halb}_{\rho}, f^{\ell+\halb}_{\rho v_i}, f^{\ell+\halb}_{\rho S} \right)^T 
	 = \int \limits_{0}^{1} \f(\boldsymbol{\psi}(\tilde{\p}^{\ell},\tilde{\p}^{\ell+1},s)) ds, 
	\end{equation} 
with the jump $\Delta \tilde{\p}^{\ell+\halb} = \tilde{\p}^{\ell+1} - \tilde{\p}^{\ell}$ and the numerical viscosity flux defined as 
\begin{equation}
	\g^{\ell+\halb}_{\tilde{\p}} 
	= 
	\left( g^{\ell+\halb}_{\rho}, g^{\ell+\halb}_{\rho v_i}, g^{\ell+\halb}_{\rho S} \right)^T
	= \epsilon^{\ell+\halb} \,  \partial^2_{\p\p} \tilde{L}^{\ell+\halb} \, \frac{\tilde{\p}^{\ell+1} - \tilde{\p}^{\ell} }{\Delta x}. 
	\label{eqn.numvisc.fd}
\end{equation}
The corresponding production term reads $\tilde{\mathbf{P}}^{\ell} = 
(0,\mathbf{0},\tilde{\Pi}^{\ell})^T$ with 
\begin{equation} 
	\tilde{\p}^{\ell} \cdot \tilde{\mathbf{P}}^{\ell} = \tilde{T}^{\ell} \tilde{\Pi}^{\ell} = 
	\halb \epsilon^{\ell+\halb} \frac{\Delta \tilde{\p}^{\ell+\halb}}{\Delta x} \cdot \partial^2_{\p\p} \tilde{L}^{\ell+\halb} \frac{\Delta \tilde{\p}^{\ell+\halb}}{\Delta x} +  
	\halb \epsilon^{\ell-\halb} \frac{\Delta \tilde{\p}^{\ell-\halb}}{\Delta x} \cdot \partial^2_{\p\p} \tilde{L}^{\ell-\halb} \frac{\Delta \tilde{\p}^{\ell-\halb}}{\Delta x}.
	\label{eqn.production.fd} 
\end{equation}

	The disadvantage of the $\p$-scheme is that it requires the expression of the physical flux 
	$\f$ in terms of the $\p$ variables, or, equivalently, it requires the variable transformation 
	$\q=\q(\p)$, which in general may be quite cumbersome. However, for the Euler subsystem at least 
	this conversion is simple and analytic. \textcolor{black}{Recall that 
	$\partial^2_{\p\p} \tilde{L}^{\ell-\halb} = (\partial^2_{\q\q} \tilde{\mathcal{E}}^{\ell-\halb})^{-1} $, see \eqref{eqn.EqqLqqRoe}. }
	Note that the proposed fully-discrete scheme is \textit{implicit}, since $\tilde{\p}^{\ell}$ is a function of $\q^{n,\ell}$ and $\q^{n+1,\ell}$, see \eqref{eqn.ptilde.def} and \eqref{eqn.timepath}. In order to obtain a simple and straightforward implementation of the fully-discrete scheme, we propose the following predictor-corrector approach, based on a Picard-type iteration, \textcolor{black}{similar to the iterative procedure employed in the fully-discrete kinetic energy-preserving 
	scheme proposed for the Euler equations in \cite{Compatiblekin}}: 
\begin{equation} 
\label{eqn.fv.picard} 
\q^{n+1,\ell}_{k+1}  = \q^{n,\ell} - \frac{\Delta t}{\Delta x} \left( \, \f^{\ell+\halb}_{\tilde{\p}_k} - \f^{\ell-\halb}_{\tilde{\p}_k} \,\right)
+ \frac{\Delta t}{\Delta x} \left( \, \g^{\ell+\halb}_{\tilde{\p}_k} - \g^{\ell-\halb}_{\tilde{\p}_k} \,\right) 
+ \tilde{\mathbf{P}}^{\ell}_k, 
\end{equation}	
with the quantity $\tilde{\p}^{\ell}_k$ defined as 
\begin{equation}
\tilde{\p}^{\ell}_k = \int \limits_0^1 \partial_{\q} \mathcal{E}(\boldsymbol{\tau}) ds, 
\qquad \textnormal{with} \qquad 
\boldsymbol{\tau} = \q^{n,\ell} + s \left( \q^{n+1,\ell}_k  - \q^{n,\ell} \right).       
\end{equation}
As initial guess for the iterative scheme we set $\q_0^{n+1,\ell} = \q^{n,\ell}$ and the iterations are stopped when the following condition is satisfied: 
\begin{equation}
    \sum \limits_\ell \left( \mathcal{E}^{n+1,\ell}_{k} - \mathcal{E}\left(\q^{n+1,\ell}_{k+1} 
    \right) \right)^2 < \delta^2. 
\end{equation}
\textcolor{black}{with $\delta>0$ an arbitrarily small tolerance, typically of the order of the machine precision. } 
Recall that 
$
    \mathcal{E}^{n+1,\ell}_{k} = \mathcal{E}^{n,\ell}_{k} + \tilde{\p}^{\ell}_{k} \cdot \left( 
    \q^{n+1,\ell}_{k} - \q^{n,\ell} \right)
$, see \eqref{eqn.roeprop.fd}.
This completes the description of the fully-discrete Godunov formalism for the inviscid Euler subsystem. 

\begin{theorem}
	\label{theorem.energy.fullydiscrete}
	The thermodynamically compatible fully-discrete finite volume \\ scheme 
	%\eqref{eqn.fluct.fd} 
\begin{equation} 
	\label{eqn.fluct.fd2} 
	\frac{\q^{n+1,\ell}-\q^{n,\ell}}{\Delta t}  = - \frac{ (\f^{\ell+\halb}_{\tilde{\p}} - \f^{\ell}) - ( \f^{\ell-\halb}_{\tilde{\p}} - \f^{\ell}) }{\Delta x}  
	+ \frac{\g^{\ell+\halb}_{\tilde{\p}} - \g^{\ell-\halb}_{\tilde{\p}} }{\Delta x} 
	+
	\tilde{\mathbf{P}}^{\ell}. 
\end{equation}	
	with production term $\tilde{\mathbf{P}}^{\ell}$ according to  \eqref{eqn.production.fd} and fluxes \eqref{eqn.p.scheme} and \eqref{eqn.numvisc.fd} verifies the fully discrete energy conservation law
	\begin{equation}
		\frac{\mathcal{E}^{n+1,\ell} - \mathcal{E}^{n,\ell}}{\Delta t} =  - \frac{1}{\Delta x} \left( 
		\tilde{D}_{\mathcal{E}}^{\ell+\halb,-} +  \tilde{D}_{\mathcal{E}}^{\ell-\halb,+}  \right) 
		+		
		\frac{g_{\mathcal{E}}^{\ell+\halb} - g_{\mathcal{E}}^{\ell-\halb}  }{\Delta x}.
		\label{eqn.energy.fd}		
	\end{equation} 
	The fluctuations above are defined as 
	\begin{equation}
		\label{eqn.Efluct.fd}
		\tilde{D}_{\mathcal{E}}^{\ell+\halb,-} = \tilde{\p}^{\ell} \cdot
		(\f^{\ell+\halb}_{\tilde{\p}} - \f^{\ell}), \qquad 
		\tilde{D}_{\mathcal{E}}^{\ell-\halb,+} = \tilde{\p}^{\ell} \cdot
		(\f^{\ell} - \f^{\ell-\halb}_{\tilde{\p}})  
	\end{equation}
	and satisfy
	\begin{equation}
		\label{eqn.fluct.cond.fd}
		\tilde{D}_{\mathcal{E}}^{\ell+\halb,-} + \tilde{D}_{\mathcal{E}}^{\ell+\halb,+} = 
		\tilde{F}^{\ell+1} - \tilde{F}^{\ell} = 
		F(\tilde{\p}^{\ell+1}) - F(\tilde{\p}^{\ell}).  
	\end{equation} 
	The numerical viscosity flux  in \eqref{eqn.energy.fd}  reads 
	\begin{equation}
		g_{\mathcal{E}}^{\ell+\halb} =  
		\halb \frac{\epsilon^{\ell+\halb}}{\Delta x} \left( 
		\tilde{\p}^{\ell} + \tilde{\p}^{\ell+1} \right) \cdot 
		\partial_{\p\p} \tilde{L}^{\ell+\halb} \left( \tilde{\p}^{\ell+1} - \tilde{\p}^{\ell} 
		\right). 
		\label{eqn.viscflux.fd}
	\end{equation}
	As a consequence, for vanishing jumps on the boundary, the scheme is nonlinearly marginally stable in the energy norm. 
\end{theorem}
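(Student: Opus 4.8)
The plan is to mirror the semi-discrete argument of Theorem~\ref{theorem.energy.semidiscrete}, but with the pointwise chain rule $\p^{\ell}\cdot\frac{d}{dt}\q^{\ell}=\frac{d}{dt}\mathcal{E}^{\ell}$ replaced by its fully-discrete surrogate. The whole construction hinges on the Roe-type identity \eqref{eqn.roeprop.fd}: because $\tilde{\p}^{\ell}$ is the path-averaged dual variable \eqref{eqn.ptilde.def} along the straight segment \eqref{eqn.timepath} in time, one has \emph{exactly} $\tilde{\p}^{\ell}\cdot(\q^{n+1,\ell}-\q^{n,\ell})=\mathcal{E}^{n+1,\ell}-\mathcal{E}^{n,\ell}$. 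First I would multiply the scheme \eqref{eqn.fluct.fd2} from the left by $\tilde{\p}^{\ell}$; by \eqref{eqn.roeprop.fd} the left-hand side collapses to $(\mathcal{E}^{n+1,\ell}-\mathcal{E}^{n,\ell})/\Delta t$, exactly as in \eqref{eqn.fluct.pt}. It then remains to show that the three contributions on the right-hand side, namely convective fluxes, viscous flux and production, reassemble into the conservative form \eqref{eqn.energy.fd}.

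For the convective part I would introduce the fluctuations \eqref{eqn.Efluct.fd} and verify the telescopic identity \eqref{eqn.fluct.cond.fd}. Expanding $\tilde{D}_{\mathcal{E}}^{\ell+\halb,-}+\tilde{D}_{\mathcal{E}}^{\ell+\halb,+}$ and collecting terms gives $-(\tilde{\p}^{\ell+1}-\tilde{\p}^{\ell})\cdot\f^{\ell+\halb}_{\tilde{\p}}+\tilde{\p}^{\ell+1}\cdot\f^{\ell+1}-\tilde{\p}^{\ell}\cdot\f^{\ell}$. Here the jump condition \eqref{eqn.fluxcondition}, which holds by construction of the path-integral flux \eqref{eqn.p.scheme}, replaces the first term by $\widetilde{v_1 L}^{\ell+1}-\widetilde{v_1 L}^{\ell}$, and with $\tilde{F}^{\ell}=\tilde{\p}^{\ell}\cdot\f^{\ell}-\widetilde{v_1 L}^{\ell}=F(\tilde{\p}^{\ell})$ (from \eqref{eqn.par02}) the sum collapses to $F(\tilde{\p}^{\ell+1})-F(\tilde{\p}^{\ell})$, which is precisely \eqref{eqn.fluct.cond.fd}. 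This step is essentially identical to the semi-discrete derivation \eqref{eqn.fluxcondition_sd1d}--\eqref{eqn.pathint.seg1_sd1d}, now read in the $\tilde{\p}$ variables.

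For the viscous flux and production I would reuse the add-and-subtract-half manipulation of \eqref{eqn.E.diss.2}. Writing $\tilde{\p}^{\ell}\cdot\g^{\ell+\halb}_{\tilde{\p}}=\halb(\tilde{\p}^{\ell}+\tilde{\p}^{\ell+1})\cdot\g^{\ell+\halb}_{\tilde{\p}}-\halb(\tilde{\p}^{\ell+1}-\tilde{\p}^{\ell})\cdot\g^{\ell+\halb}_{\tilde{\p}}$ and inserting \eqref{eqn.numvisc.fd}, the symmetric part yields exactly the viscous energy flux $g_{\mathcal{E}}^{\ell+\halb}$ of \eqref{eqn.viscflux.fd}, while the antisymmetric part leaves a quadratic remainder proportional to $\epsilon^{\ell+\halb}(\Delta\tilde{\p}^{\ell+\halb}/\Delta x)\cdot\partial^2_{\p\p}\tilde{L}^{\ell+\halb}(\Delta\tilde{\p}^{\ell+\halb}/\Delta x)$ at each interface. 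The production term $\tilde{\mathbf{P}}^{\ell}$ is defined in \eqref{eqn.production.fd} precisely so that $\tilde{\p}^{\ell}\cdot\tilde{\mathbf{P}}^{\ell}$ equals the sum of the two remainders from the $\ell\pm\halb$ interfaces, so these cancel and only the telescoping viscous flux $g_{\mathcal{E}}^{\ell+\halb}-g_{\mathcal{E}}^{\ell-\halb}$ survives; this establishes \eqref{eqn.energy.fd}. Marginal stability then follows by summing \eqref{eqn.energy.fd} over $\ell$: the convective fluctuations telescope through \eqref{eqn.fluct.cond.fd} and the viscous fluxes telescope directly, so under vanishing boundary jumps the total $\sum_{\ell}|\Omega^{\ell}|(\mathcal{E}^{n+1,\ell}-\mathcal{E}^{n,\ell})/\Delta t$ vanishes.

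The hard part, and the genuine novelty compared with the semi-discrete case, is recognizing that the discrete chain rule no longer holds for a naive midpoint or endpoint dual state: only the path-averaged $\tilde{\p}^{\ell}$ of \eqref{eqn.ptilde.def} satisfies the exact identity \eqref{eqn.roeprop.fd}, and correspondingly the interface flux \eqref{eqn.p.scheme}, the Roe-inverse-Hessian viscous flux \eqref{eqn.numvisc.fd} and the production \eqref{eqn.production.fd} must all be expressed consistently in these same $\tilde{\p}$ variables for the cancellations to be exact rather than merely approximate. I would also stress that, because $\tilde{\p}^{\ell}$ depends implicitly on both $\q^{n,\ell}$ and $\q^{n+1,\ell}$, the energy identity \eqref{eqn.energy.fd} is exact only at the converged fixed point of the Picard iteration \eqref{eqn.fv.picard}; checking that the stopping criterion is compatible with the claimed exact conservation is the one point requiring care beyond the otherwise purely algebraic bookkeeping.
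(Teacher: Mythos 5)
Your proposal is correct and follows essentially the same route as the paper's proof: multiply by the time-path-averaged dual state $\tilde{\p}^{\ell}$ and use the Roe property \eqref{eqn.roeprop.fd} for the time derivative, invoke the jump condition \eqref{eqn.fluxcondition} built into the path-integral flux \eqref{eqn.p.scheme} for the convective fluctuations, and absorb the quadratic remainders of the add-and-subtract-half manipulation of the viscous terms into the production term \eqref{eqn.production.fd} before telescoping. Your closing remark that the identity holds only at the converged fixed point of the Picard iteration \eqref{eqn.fv.picard} is a sensible observation about the implementation, though it is not needed for the theorem, which concerns the implicit scheme \eqref{eqn.fluct.fd2} itself.
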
	

\begin{proof}
	Multiplying \eqref{eqn.fluct.fd2} with $\tilde{\p}$ defined according to \eqref{eqn.ptilde.def} and using the Roe property \eqref{eqn.roeprop.fd} yields 
	\begin{equation}
		\tilde{\p}^{\ell} \cdot \frac{\q^{n+1,\ell}-\q^{n,\ell}}{\Delta t} = 
 		\frac{\mathcal{E}^{n+1,\ell} - \mathcal{E}^{n,\ell}}{\Delta t}. 			
	\end{equation}
	Furthermore, using \eqref{eqn.Efluct.fd} one has immediately 
	\begin{equation}
		\tilde{\p}^{\ell} \cdot \left( \f^{\ell+\halb}_{\tilde{\p},d} - \f^{\ell} \right)  +
		\tilde{\p}^{\ell} \cdot \left( \f^{\ell} - \f^{\ell+\halb}_{\tilde{\p},d} \right) = \tilde{D}_{\mathcal{E}}^{\ell+\halb,-} + \tilde{D}_{\mathcal{E}}^{\ell-\halb,+}. 
	\end{equation}
By construction, \eqref{eqn.aux1.fd}-\eqref{eqn.p.scheme}, the fluctuations satisfy \eqref{eqn.fluct.cond.fd}. For the numerical viscosity we get 
after some calculations, see \eqref{eqn.E.diss.2} and \eqref{eqn.diss.2d} for the semi-discrete case,  
\begin{eqnarray}
	\tilde{\p}^{\ell} \cdot \frac{ \g^{\ell+\halb}_{\tilde{\p}} - \g^{\ell-\halb}_{\tilde{\p}} }{\Delta x} 
	+ \tilde{\p}^{\ell} \cdot \tilde{\mathbf{P}} && \nonumber \\   
	= \frac{\epsilon^{\ell+\halb}}{\Delta x} \tilde{\p}^{\ell} \cdot  \partial^2_{\p\p} \tilde{L}^{\ell+\halb} \left( \tilde{\p}^{\ell+1} - \tilde{\p}^{\ell} \right) -
	\frac{\epsilon^{\ell-\halb}}{\Delta x} \tilde{\p}^{\ell} \cdot  \partial^2_{\p\p} \tilde{L}^{\ell-\halb} \left( \tilde{\p}^{\ell} - \tilde{\p}^{\ell-1} \right) 
	+ \tilde{\p}^{\ell} \cdot \tilde{\mathbf{P}} && \nonumber \\
=  \halb \frac{\tilde{\p}^{\ell+1} + \tilde{\p}^{\ell}}{\Delta x}  \cdot \epsilon^{\ell+\halb} \partial^2_{\p\p} \tilde{L}^{\ell+\halb} \frac{\Delta \tilde{\p}^{\ell+\halb}}{\Delta x} - \halb \frac{ \tilde{\p}^{\ell} + \tilde{\p}^{\ell-1}}{\Delta x} \cdot \epsilon^{\ell-\halb} \partial^2_{\p\p} \tilde{L}^{\ell-\halb} \frac{\Delta \tilde{\p}^{\ell-\halb}}{\Delta x}  &&  \nonumber \\    
-\halb \frac{\Delta \tilde{\p}^{\ell+\halb}}{\Delta x}  \cdot \epsilon^{\ell+\halb} \partial^2_{\p\p} \tilde{L}^{\ell+\halb} \frac{\Delta \tilde{\p}^{\ell+\halb}}{\Delta x} - \halb \frac{ \Delta \tilde{\p}^{\ell-\halb}}{\Delta x} \cdot \epsilon^{\ell-\halb} \partial^2_{\p\p} \tilde{L}^{\ell-\halb} \frac{\Delta \tilde{\p}^{\ell-\halb}}{\Delta x} + \tilde{\p}^{\ell} \cdot \tilde{\mathbf{P}} &&  \nonumber \\   
= \frac{g_{\mathcal{E}}^{\ell+\halb} - g_{\mathcal{E}}^{\ell-\halb}}{\Delta x}
\end{eqnarray}
due to the definition \eqref{eqn.viscflux.fd} and the production term that satisfies 
\eqref{eqn.production.fd}. Multiplication of  \eqref{eqn.energy.fd} by $\Delta t \Delta x$ and 
summation over $\Omega^\ell$ yields 
\begin{equation} 
		\sum \limits_{\ell} \Delta x \left( \mathcal{E}^{n+1,\ell} - \mathcal{E}^{n,\ell} \right) =  - \sum \limits_{\ell}  \Delta t \left( 
\tilde{D}_{\mathcal{E}}^{\ell+\halb,-} +  \tilde{D}_{\mathcal{E}}^{\ell-\halb,+}  
+		
g_{\mathcal{E}}^{\ell+\halb} - g_{\mathcal{E}}^{\ell-\halb} 
\right) = 0. 
\label{eqn.energysum.fd}
\end{equation} 
The terms on the right hand side of \eqref{eqn.energysum.fd} are a telescopic sum that vanishes because the fluctuations satisfy \eqref{eqn.fluct.cond.fd} and since the jumps vanish at the boundary.
\end{proof}

\begin{theorem}
	The fully-discrete finite volume scheme \eqref{eqn.fluct.fd2}  
	with production term $\tilde{\mathbf{P}}^{\ell}$ according to  \eqref{eqn.production.fd} and fluxes \eqref{eqn.p.scheme} and \eqref{eqn.numvisc.fd} satisfies the fully discrete cell entropy inequality 
	\begin{equation}
		(\rho S)^{n+1,\ell} \geq  
		(\rho S)^{n,\ell} - \frac{\Delta t}{\Delta x} \left(
		f_{\rho S}^{\ell+\halb}-f_{\rho S}^{\ell-\halb} \right) 
		+ 
		\frac{\Delta t}{\Delta x} \left(
		g_{\rho S}^{\ell+\halb}-g_{\rho S}^{\ell-\halb} \right), 
		\label{eqn.entropyinequality.fd} 
	\end{equation}
	assuming that $\tilde{T}^{\ell} > 0$ and 
	$\partial^2_{\p\p} \tilde{L}^{\ell \pm \halb} > 0$. 
\end{theorem}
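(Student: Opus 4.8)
The plan is to read off the entropy ($\rho S$) row of the fully-discrete update \eqref{eqn.fluct.fd2} and to show that the only term that could spoil an exact conservation law, namely the discrete production $\tilde{\Pi}^{\ell}$, is nonnegative. By construction $\tilde{\mathbf{P}}^{\ell} = (0,\mathbf{0},\tilde{\Pi}^{\ell})^T$ places the whole production in the entropy slot, and the cancellation $(\f^{\ell+\halb}_{\tilde{\p}} - \f^{\ell}) - (\f^{\ell-\halb}_{\tilde{\p}} - \f^{\ell}) = \f^{\ell+\halb}_{\tilde{\p}} - \f^{\ell-\halb}_{\tilde{\p}}$ already puts the convective part in conservation form. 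Hence the third component of \eqref{eqn.fluct.fd2} reads
\begin{equation*}
\frac{(\rho S)^{n+1,\ell} - (\rho S)^{n,\ell}}{\Delta t} = -\frac{f_{\rho S}^{\ell+\halb} - f_{\rho S}^{\ell-\halb}}{\Delta x} + \frac{g_{\rho S}^{\ell+\halb} - g_{\rho S}^{\ell-\halb}}{\Delta x} + \tilde{\Pi}^{\ell},
\end{equation*}
with $f_{\rho S}^{\ell+\halb}$ and $g_{\rho S}^{\ell+\halb}$ the entropy entries of the numerical fluxes \eqref{eqn.p.scheme} and \eqref{eqn.numvisc.fd}. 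Multiplying by $\Delta t>0$ and rearranging, the target inequality \eqref{eqn.entropyinequality.fd} reduces to the single pointwise statement $\tilde{\Pi}^{\ell} \geq 0$.

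Second, I would establish $\tilde{\Pi}^{\ell}\geq 0$ directly from the defining identity \eqref{eqn.production.fd}. Since $\tilde{\mathbf{P}}^{\ell}$ has a nonzero entry only in the entropy position, the left-hand side of \eqref{eqn.production.fd} collapses to $\tilde{\p}^{\ell}\cdot\tilde{\mathbf{P}}^{\ell} = \tilde{T}^{\ell}\tilde{\Pi}^{\ell}$, where $\tilde{T}^{\ell}$ is the temperature-like third component of $\tilde{\p}^{\ell}$. The right-hand side is
\begin{equation*}
\halb \epsilon^{\ell+\halb} \frac{\Delta\tilde{\p}^{\ell+\halb}}{\Delta x}\cdot \partial^2_{\p\p}\tilde{L}^{\ell+\halb}\frac{\Delta\tilde{\p}^{\ell+\halb}}{\Delta x} + \halb \epsilon^{\ell-\halb} \frac{\Delta\tilde{\p}^{\ell-\halb}}{\Delta x}\cdot \partial^2_{\p\p}\tilde{L}^{\ell-\halb}\frac{\Delta\tilde{\p}^{\ell-\halb}}{\Delta x},
\end{equation*}
a sum of two quadratic forms. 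Each is nonnegative because the scalar numerical viscosities satisfy $\epsilon^{\ell\pm\halb}\geq 0$, see \eqref{eqn.viscosity}, and, by hypothesis, the Roe-averaged Legendre Hessians $\partial^2_{\p\p}\tilde{L}^{\ell\pm\halb}$ are positive definite. Therefore $\tilde{T}^{\ell}\tilde{\Pi}^{\ell}\geq 0$, and dividing by the assumed $\tilde{T}^{\ell}>0$ gives $\tilde{\Pi}^{\ell}\geq 0$, which closes the argument.

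It is worth stressing what this proof does and does not use. Unlike Theorem \ref{theorem.energy.fullydiscrete}, the entropy inequality is a purely \emph{local}, cell-by-cell statement: it needs neither the Roe-type relation \eqref{eqn.roeprop.fd} nor any telescoping over $\ell$, and it does not invoke the energy flux condition \eqref{eqn.fluct.cond.fd}. The entire content is the sign of one scalar, so no genuine obstacle arises. The only delicate point to flag is the hypothesis $\partial^2_{\p\p}\tilde{L}^{\ell\pm\halb}>0$: by \eqref{eqn.EqqLqqRoe} this matrix is the inverse of the path-averaged Hessian $\int_0^1 \partial^2_{\q\q}\mathcal{E}(\tilde{\boldsymbol{\psi}}(s))\,ds$, so its positive definiteness is inherited from the convexity of $\mathcal{E}$ along the segment path joining $\q^{\ell}$ and $\q^{\ell+1}$, which for the Euler subsystem is precisely the thermodynamic stability condition and holds throughout the admissible state region. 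Moreover, if one weakens the assumption to positive \emph{semi}-definiteness, the same computation still yields $\tilde{\Pi}^{\ell}\geq 0$, so the inequality degrades gracefully.
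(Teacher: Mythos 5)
Your proposal is correct and follows essentially the same route as the paper's proof: isolate the $\rho S$ component of the update \eqref{eqn.fluct.fd2}, observe that the only non-flux contribution is $\Delta t\,\tilde{\Pi}^{\ell}$, and conclude $\tilde{\Pi}^{\ell}\geq 0$ from the nonnegativity of the two quadratic forms in \eqref{eqn.production.fd} together with $\tilde{T}^{\ell}>0$ and $\partial^2_{\p\p}\tilde{L}^{\ell\pm\halb}>0$. Your additional remarks (explicitly invoking $\epsilon^{\ell\pm\halb}\geq 0$ and noting that semi-definiteness of the Hessian suffices) are accurate refinements of the same argument rather than a different approach.
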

\begin{proof}
	The fully-discrete form of the entropy equation \eqref{eqn.entropy} according to the scheme \eqref{eqn.fluct.fd2} reads 
	\begin{equation}
	(\rho S)^{n+1,\ell} =  
	(\rho S)^{n,\ell} - \frac{\Delta t}{\Delta x} \left(
	f_{\rho S}^{\ell+\halb}-f_{\rho S}^{\ell-\halb} \right) 
	+ 
	\frac{\Delta t}{\Delta x} \left(
	g_{\rho S}^{\ell+\halb}-g_{\rho S}^{\ell-\halb} \right) + 
	\Delta t \, \tilde{\Pi}^{\ell}, 
\end{equation}	
with 
\begin{equation}
	 \tilde{\Pi}^{\ell} = \frac{1}{\tilde{T}^{\ell}} \left(  
	\halb \epsilon^{\ell+\halb} \frac{\Delta \tilde{\p}^{\ell+\halb}}{\Delta x} \cdot \partial^2_{\p\p} \tilde{L}^{\ell+\halb} \frac{\Delta \tilde{\p}^{\ell+\halb}}{\Delta x} +  
	\halb \epsilon^{\ell-\halb} \frac{\Delta \tilde{\p}^{\ell-\halb}}{\Delta x} \cdot \partial^2_{\p\p} \tilde{L}^{\ell-\halb} \frac{\Delta \tilde{\p}^{\ell-\halb}}{\Delta x} \right) \geq 0,
\end{equation}
since we assume $\tilde{T}^{\ell} > 0$ and $\partial^2_{\p\p} \tilde{L}^{\ell \pm \halb} > 0$, hence one directly obtains the inequality \eqref{eqn.entropyinequality.fd}. 
\end{proof} 

\section{Numerical results}
\label{sec.results}

The new schemes for hyperbolic and thermodynamically compatible PDE systems (HTC schemes) proposed 
in this paper do \textit{not} discretize the energy conservation law \eqref{eqn.energy} explicitly, 
but consider the \textit{entropy inequality} \eqref{eqn.entropy} instead. Semi-discrete / 
fully-discrete energy conservation is obtained as a mere consequence of the thermodynamically 
compatible discretization of the PDEs \eqref{eqn.conti}-\eqref{eqn.heatflux}. As such, the proposed 
approach is \textit{different} from most existing finite volume discretizations. The main aim 
of the following numerical test problems is  therefore to show that the scheme is able to compute 
correct solutions to problems with shock waves, as predicted by Theorems 
\ref{theorem.energy.semidiscrete} and \ref{theorem.energy.fullydiscrete} on the semi-discrete and 
fully-discrete energy conservation, respectively. Furthermore, we check numerically whether the  
relaxation limit of the model (Navier-Stokes limit) is properly captured, i.e. when for 
sufficiently small relaxation times $\tau_1$ and $\tau_2$ the behaviour of the medium becomes the 
one of a viscous heat-conducting Newtonian fluid. For more advanced applications of the GPR model, 
the reader is referred to 
\cite{GPRmodel,GPRmodelMHD,FrontierADERGPR,crack1,nonNewtonian2021,GRGPR}. 
In the following tests, when a viscosity coefficient $\mu$ is specified together with a shear  sound speed $c_s$, the corresponding relaxation time $\tau_1$ is calculated as $\tau_1 = 6 \mu / (\rho_0 c_s^2)$, according to \eqref{eqn.asymptoticlimit} and the results of the asymptotic analysis carried out in  \cite{GPRmodel}. In all tests of this section, the semi-discrete HTC schemes are integrated in time using an explicit third order TVD Runge-Kutta scheme, see \cite{shuosher1,shu2}. For more efficient IMEX Runge-Kutta schemes in the case of stiff relaxation source terms, see the work of Pareschi \& Russo \cite{PareschiRusso2000,PareschiRusso2005} as well as \cite{Naldi_Pareschi,Caflish_Jin_Russo,Buet_Despres,JinPareschiToscani}.    
\textcolor{black}{In all numerical tests carried out with the semi-discrete scheme, we assume the time step $\Delta t$ to be small enough so that time discretization errors can be neglected concening the conservation of total energy.
	In the following, if not stated otherwise, the numerical viscosity $\epsilon^{\ell+\halb}$ is chosen according to \eqref{eqn.viscosity}. When explicit values of $\epsilon$ are provided, 
the numerical dissipation is chosen as a constant, $\epsilon^{\ell+\halb}=\epsilon$.  }

\subsection{Numerical convergence study} 
\label{sec.numconv} 

In this section, we solve the isentropic vortex problem forwarded in \cite{HuShuTri} in order to 
verify the accuracy of the proposed HTC schemes. We apply the schemes to the pure inviscid Euler 
equations, i.e. to the black terms in \eqref{eqn.conti}-\eqref{eqn.entropy}, setting $\gamma=1.4$, 
$c_s=0$, $c_h=0$ and $\epsilon=0$. The computational domain is $\Omega = [0,10]^2$ with periodic 
boundaries everywhere. The initial conditions for the perturbations are given in 
\cite{HuShuTri,GPRmodel} and are not repeated here to save space. 
The background velocity is chosen as $\mathbf{v}_0=\mathbf{0}$ so that a stationary vortex is 
obtained. In this situation, the exact solution is given by the initial condition for all times. 
Simulations are run with the semi-discrete HTC scheme until a final time of $t=0.25$ using an 
equidistant Cartesian grid composed of $N_x \times N_y$ control volumes. The $L^2$ errors obtained 
with the semi-discrete HTC schemes at the final time for the density $\rho$, the momentum density 
$\rho v_1$ and the entropy density $\rho S$ are shown in Table \ref{tab.conv.semi} together with 
the corresponding convergence rates. The results for the fully discrete HTC scheme are reported in 
Table \ref{tab.conv.full}. 
One can observe that all proposed HTC schemes are of second order of accuracy.  

\begin{table}
	\renewcommand{\arraystretch}{1.1}
	\caption{Numerical convergence results for the isentropic vortex problem, obtained with the semi-discrete HTC scheme proposed in this paper. The reported $L^2$ error norms refer to a final time of $t=0.25$.  } 
	\label{tab.conv.semi} 
	\begin{center} 
	\begin{tabular}{ccccccc}
	\hline 
	$N_x=N_y$  &  $\|\rho\|^2 $	& $\| \rho v_1 \|^2$ & $ \| \rho S \|^2 $   & $\mathcal{O}(\rho)$ & $\mathcal{O}(\rho v_1)$ & $\mathcal{O}(\rho S)$ \\ 
	\hline
	32	       & 6.1094E-03 & 9.1324E-03 & 4.7896E-04 &     &     &     \\ 
	64	       & 1.5602E-03 & 2.3633E-03 & 1.3256E-04 & 2.0 & 2.0 & 1.9 \\
	128		   & 3.9230E-04 & 5.9585E-04 & 3.3972E-05 & 2.0 & 2.0 & 2.0 \\
	256        & 9.8232E-05 & 1.4928E-04 & 8.5455E-06 & 2.0 & 2.0 & 2.0 \\
	512        & 2.4626E-05 & 3.7369E-05 & 2.1397E-06 & 2.0 & 2.0 & 2.0 \\
	\hline
	\end{tabular} 	
	\end{center}
\end{table}

\begin{table}
	\renewcommand{\arraystretch}{1.1}
	\caption{Numerical convergence results for the isentropic vortex problem, obtained with the fully-discrete HTC scheme proposed in this paper. The reported $L^2$ error norms refer to a final time of $t=0.25$.  } 
	\label{tab.conv.full} 
	\begin{center} 
		\begin{tabular}{ccccccc}
			\hline 
			$N_x=N_y$  &  $\|\rho\|^2 $	& $\| \rho v_1 \|^2$ & $ \| \rho S \|^2 $   & $\mathcal{O}(\rho)$ & $\mathcal{O}(\rho v_1)$ & $\mathcal{O}(\rho S)$ \\ 
			\hline
		32	        & 6.2046E-03 & 9.1891E-03 & 4.8312E-04 &     &     &     \\  
		64	        & 1.5749E-03 & 2.3742E-03 & 1.3384E-04 & 2.0 & 2.0 & 1.9 \\
		128	        & 3.9424E-04 & 5.9737E-04 & 3.4170E-05 & 2.0 & 2.0 & 2.0 \\
		256         & 9.8481E-05 & 1.4948E-04 & 8.5718E-06 & 2.0 & 2.0 & 2.0 \\
		512         & 2.4658E-05 & 3.7394E-05 & 2.1430E-06 & 2.0 & 2.0 & 2.0 \\
			\hline
		\end{tabular} 	
	\end{center}
\end{table}

\subsection{Simple shear motion in solids and fluids} 
\label{sec.shearlayer} 

We first apply the new HTC schemes to simple shear motion in solids and fluids. The one-dimensional computational domain is $\Omega = [-0.5,+0.5]$ and the initial condition of the problem, which is also prescribed at the boundaries of $\Omega$, is given by   
$\rho=1$, $v_1=v_3=0$, $\textcolor{red}{p=1}$, $\AAA=\mathbf{I}$, $\mathbf{J}=\mathbf{0}$, while the velocity 
component $v_2$ is $v_2 = -v_0$ for $x<0$
and $v_2=+v_0$ for $x\geq 0$, with $v_0=0.1$. The remaining parameters of this test are 
$\gamma=1.4$, $c_v=1$, $\rho_0=1$, $c_s=1$ and $c_h=0$. The calculations are carried out with the 
new HTC schemes on a grid composed of $1024$ control volumes up to a final time of $t=0.4$. 
In the Navier-Stokes limit of the GPR model, a reference solution can be obtained by the exact solution of the incompressible Navier-Stokes equations for the first problem of Stokes, see e.g. \cite{GPRmodel,SIGPR,Hybrid1,Hybrid2}. 
%, which for velocity component $v_2$ reads 
%\begin{equation}
%\label{eqn.fpstokes} 
%v_2(x,t) = v_0 \, \textnormal{erf}\left( \halb \frac{x}{\sqrt{\nu t}} %\right),  
%\end{equation} 
%with $\nu = \mu / \rho_0$. 
For the solid limit of the GPR model ($\tau_1 \to \infty$), this initial condition leads to two shear waves traveling to the left and right, respectively, with speed $c_s$. A reference solution can be obtained using a classical second order MUSCL-Hancock scheme \cite{toro-book} on a fine mesh of 32000 cells. We stress that for all cases with $\mu > 0$ the HTC scheme has been run without any numerical viscosity, i.e. setting $\epsilon=0$.   
The comparison between the numerical solutions obtained with the new HTC schemes and the aforementioned reference solutions is presented in Fig. \ref{fig.shear}, where one can observe an excellent agreement for all cases. 

\begin{figure}[!htbp]
	\begin{center}
		\begin{tabular}{cc} 
			\includegraphics[width=0.38\textwidth]{./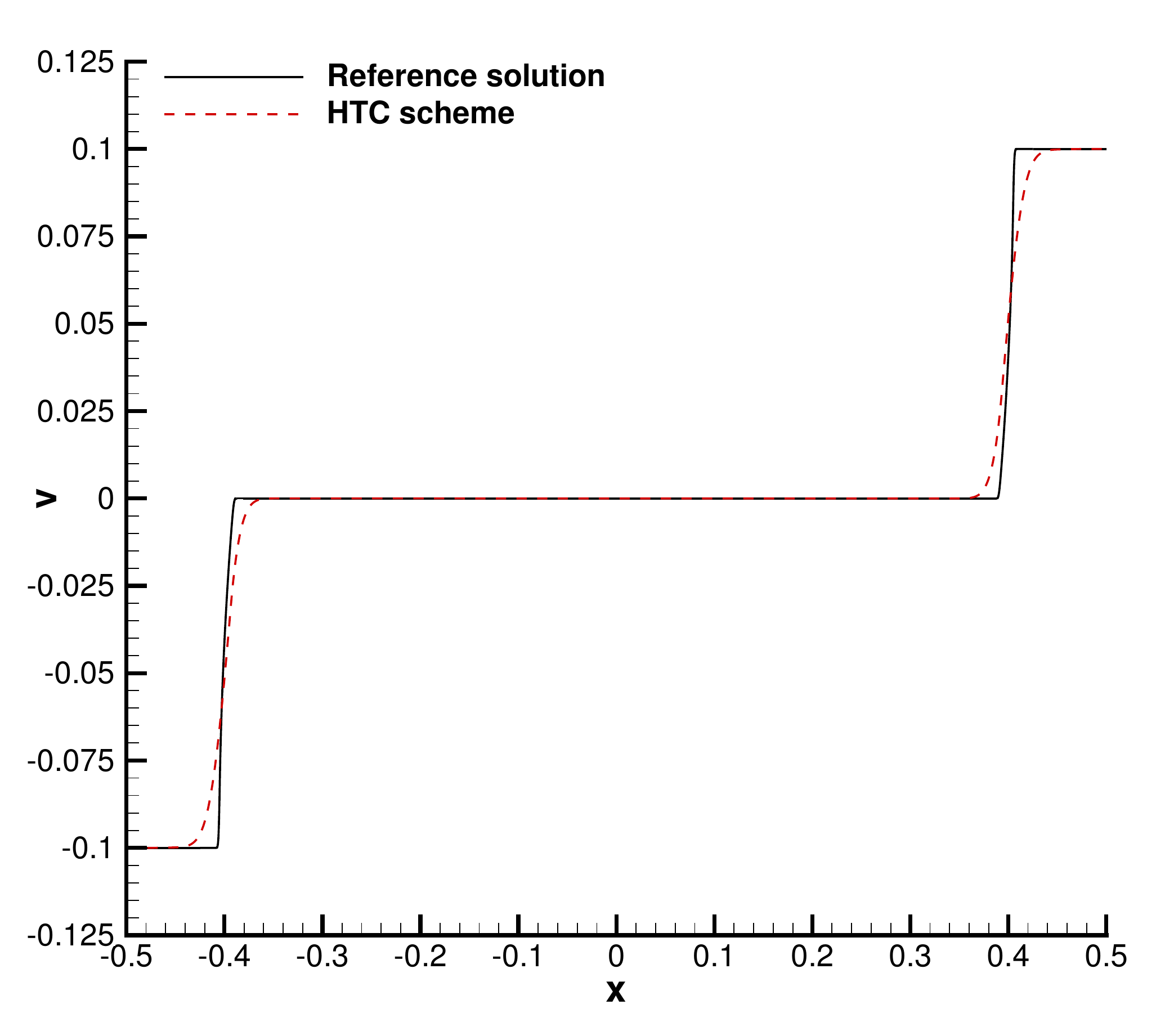}   & 
			\includegraphics[width=0.38\textwidth]{./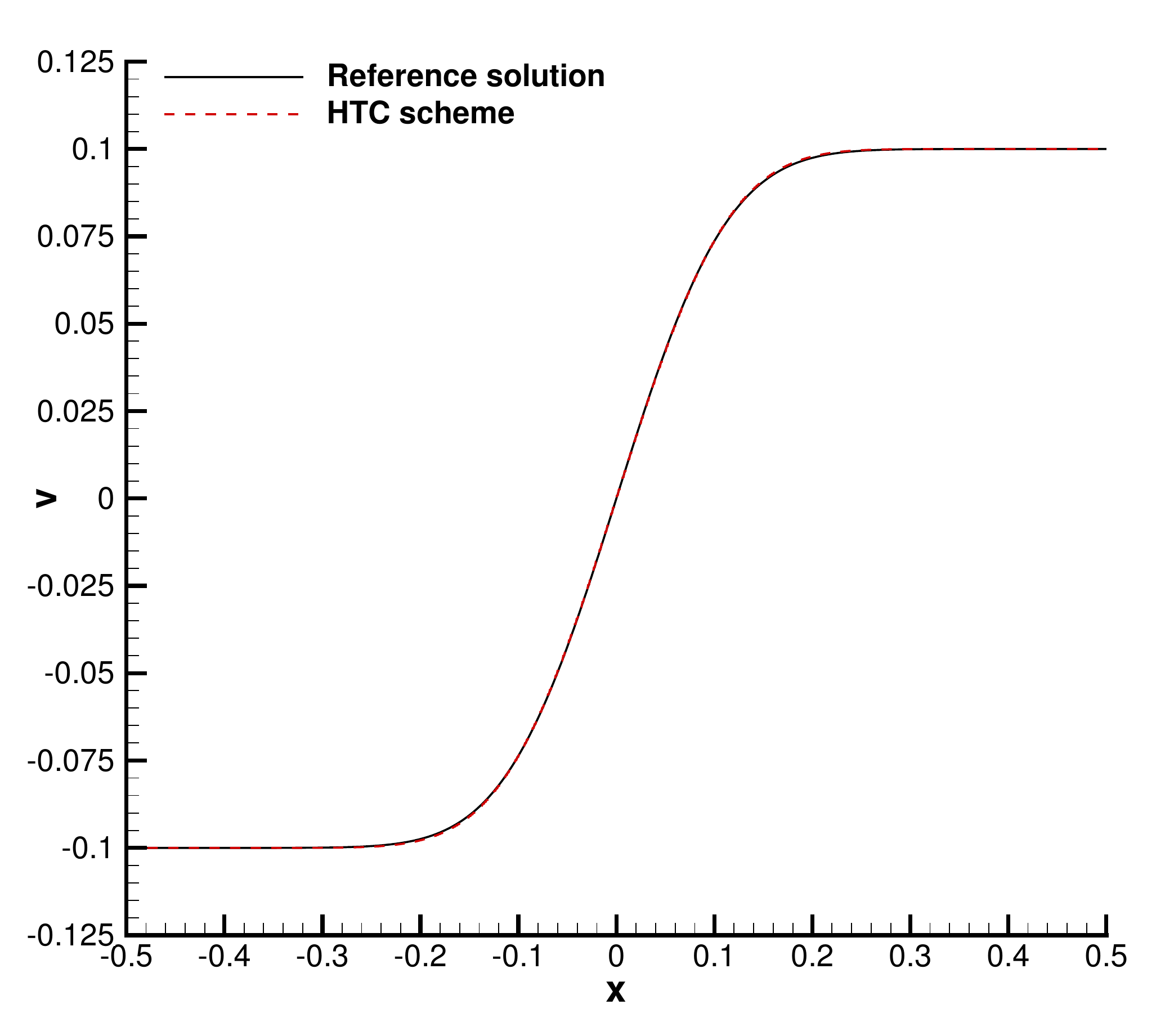}  \\  
			\includegraphics[width=0.38\textwidth]{./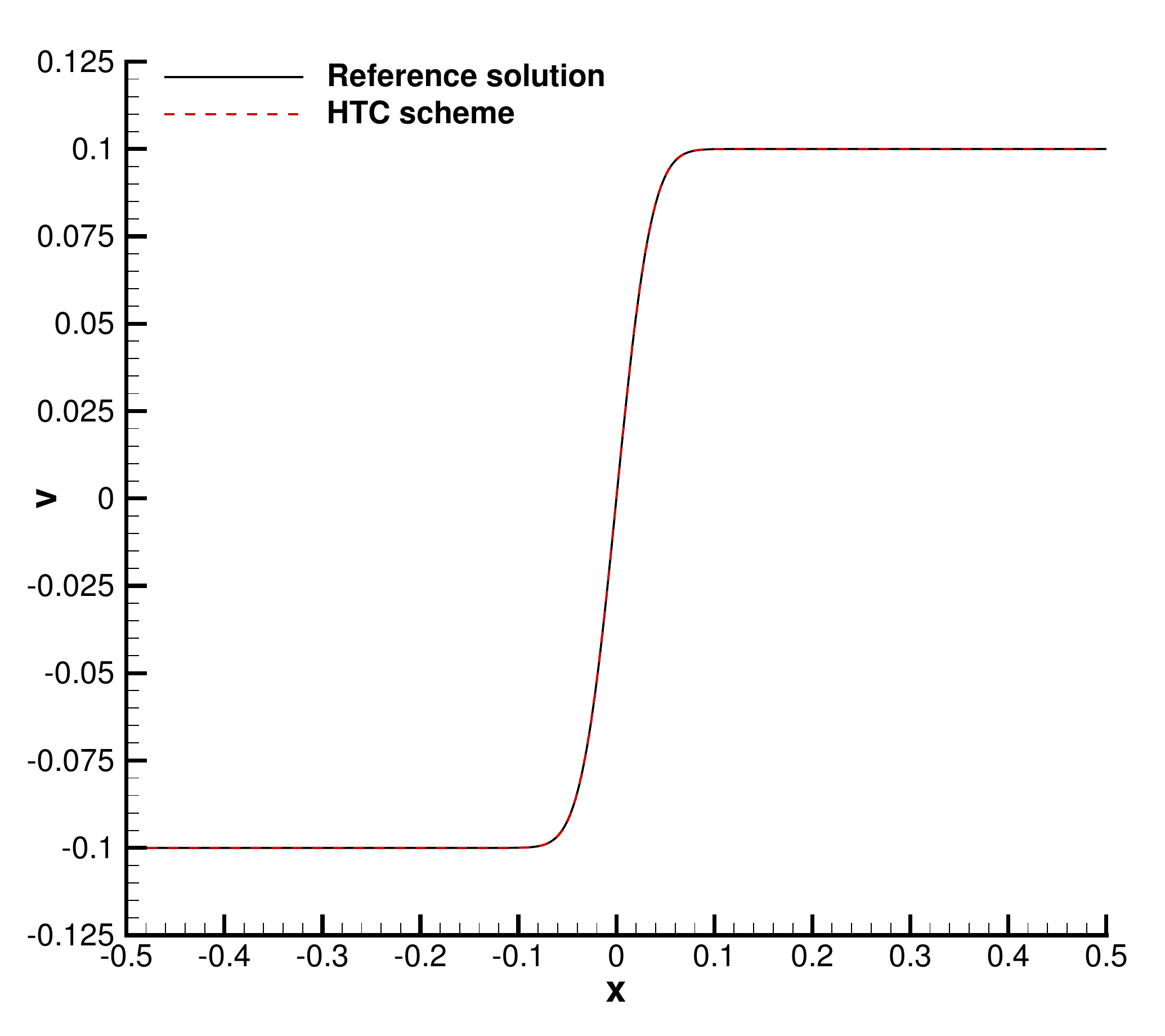}   & 
			\includegraphics[width=0.38\textwidth]{./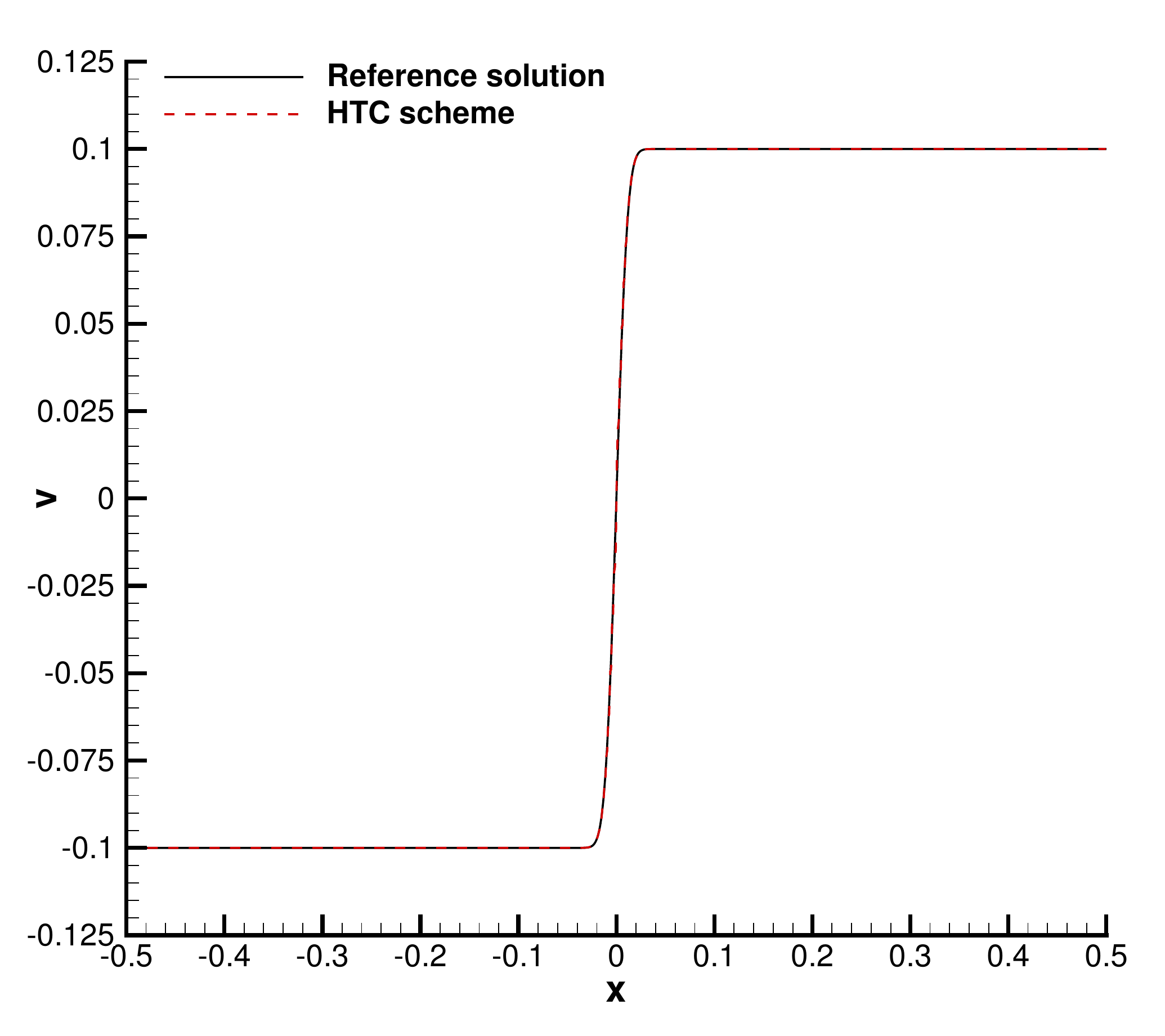}  
		\end{tabular} 
	    \vspace*{-4mm}
		\caption{Numerical solution at time $t=0.4$ obtained with the new thermodynamically compatible HTC schemes for the GPR model applied to a simple shear flow in fluids and in an elastic solid. Results for the solid (top left) and for fluids with different viscosities: $\mu=10^{-2}$ (top right), $\mu=10^{-3}$ (bottom left) and $\mu=10^{-4}$ (bottom right). 
		For fluids, this test corresponds to the first problem of Stokes, which has an exact analytical solution.  } 
		\label{fig.shear}
	\end{center}
\end{figure}

\subsection{Riemann problems} 
\label{sec.rp} 

In this section, we solve a set of Riemann problems with initial data according to Table \ref{tab.ic.rp}, for both the Euler equations of compressible gasdynamics, which are a subset of the GPR model (black terms in \eqref{eqn.GPR}), and for the full GPR model in both its fluid and solid limit. The initial discontinuity is located in $x_c$. For the Euler equations, we consider semi-discrete as well as fully-discrete schemes and the exact solution of the Riemann problem has been provided in \cite{toro-book}, while for the GPR model we consider two types of completely independent numerical reference solutions. The first reference solution is obtained by using a classical MUSCL-Hancock finite volume scheme on a fine mesh of 128000 elements, discretizing the total energy conservation law \eqref{eqn.energy} instead of the entropy inequality \eqref{eqn.entropy}. An alternative reference solution is obtained by solving the GPR model \eqref{eqn.conti}-\eqref{eqn.entropy} with the entropy inequality in its vanishing viscosity limit, using a  fourth order ADER-DG scheme on a fine mesh composed of 14400 order elements, including also the quadratic entropy production term in \eqref{eqn.entropy}. In this case, thermodynamic compatibility is achieved simply at the aid of a fully resolved simulation employing sufficiently fine meshes in combination with high order of accuracy in space and time, see \cite{SWETurbulence}. The numerical results obtained with the semi-discrete and fully-discrete HTC schemes for the compressible Euler equations are shown in Figs. \ref{fig.RP12} and \ref{fig.RP1s}, while the numerical results obtained with the semi-discrete HTC scheme applied to the fluid and solid limits of the GPR model are presented in Fig. \ref{fig.RP3} and \ref{fig.RP4}, respectively, together with the reference solution obtained with the MUSCL-Hancock scheme solving the energy conservation law \eqref{eqn.energy}, as well as the reference solution obtained with the high order ADER-DG scheme applied to the viscous system \eqref{eqn.conti}-\eqref{eqn.entropy}. \textcolor{black}{The effective mesh resolution is provided for each test case in the corresponding figure caption.} In all cases we can note an excellent agreement between the numerical solution obtained with the new HTC schemes forwarded in this paper and the available exact or numerical reference solutions. 

\textcolor{black}{Test problem RP1s was proposed by Toro in \cite{toro-book} and includes a sonic rarefaction. Simulations are carried out on several meshes and the obtained quantities $\rho$, $p$, $u=v_1$ and $S$ are shown in Fig. \ref{fig.RP1s}. We observe that the thermodynamically compatible schemes proposed in this paper do \textit{not} exhibit any sonic glitch, compared to other Godunov-type finite volume schemes, see \cite{toro-book}.}

\textcolor{black}{A quantitative study concerning the influence of the number of Gauss-Legendre quadrature nodes $n_{GP}$ and the chosen time discretization on the total energy conservation error can be found for a smoothed version of RP1 with initial data $\q(x,0)=\halb (\q_L + \q_R) + \halb (\q_R - \q_L) \, \textnormal{erf}(x/\chi)$ with $\chi=0.01$ in Table \ref{tab.energy.error.analysis}. As expected, the conservation error of the semi-discrete schemes is dominated by the time discretization and the chosen time step size (CFL number), while the energy conservation error of the fully discrete scheme is independent of the time step size and is dominated only by the numerical quadrature rule used in \eqref{eqn.p.scheme.diss}.} 

\begin{table}[!htbp]
	\renewcommand{\arraystretch}{1.05}
	\caption{Initial states left (L) and right (R) for density $\rho$, velocity $\mathbf{v}=(u,v,0)$ and pressure $p$  
		for a set of Riemann problems solved on the domain $\Omega=[-\frac{1}{2},+\frac{1}{2}]$ using the new HTC schemes. 		
		The Riemann problems include the pure Euler equations (RP1, RP2 \textcolor{black}{and RP1s}), as well as the fluid and solid limit of the GPR model (RP3 and RP4). 
		For the GPR model (RP3 and RP4) we initialize $\AAA$ and $\mathbf{J}$ as 
		$\mathbf{A}=\sqrt[3]{\rho} \, \mathbf{I}$ and  $\mathbf{J}=\mathbf{0}$ and set $c_s=c_h=1$. 
		The relaxation times have been chosen as $\tau_1  = \tau_2 = 2 \cdot 10^{-5}$ for RP3 and $\tau_1 = \tau_2 = 10^{20}$ for RP4. In all cases we set $\gamma=1.4$. 
	\vspace{-1mm}} 
	%\ip{Couldn't find $ c_s $ for RP3 and RP4. Is it $ c_s=1 $?}} 
	\begin{center} 
		\begin{tabular}{ccccccccc} 
			\hline
			RP & $\rho_L$ & $u_L$ & $v_L$ & $p_L$ & $\rho_R$ & $u_R$ & $v_R$ & $p_R$  \\ 
			\hline
			RP1 &  1.0      &  0.0       &  0.0 & 1.0     & 0.125      &  0.0        &  0.0 & 0.1      \\
			\textcolor{black}{RP1s} &  1.0      &  \textcolor{black}{0.75}       &  0.0 & 1.0     & 0.125      &  0.0        &  0.0 & 0.1       \\
			RP2 &  5.99924  & 19.5975    & 0.0  & 460.894 & 5.99242    & -6.19633    &  0.0 & 46.095   \\ 
			RP3 &  1.0      &  0.0       & -0.2 & 1.0     & 0.5        &  0.0        & +0.2 & 0.5      \\ 
			RP4 &  1.0      &  0.0       & -0.2 & 1.0     & 0.5        &  0.0        & +0.2 & 0.5      \\ 
			\hline
		\end{tabular}
	\end{center} 
	\label{tab.ic.rp}
\end{table}
 
\begin{table}[!htbp]
	\renewcommand{\arraystretch}{1.05}
	\textcolor{black}{
	\caption{Total energy conservation error depending on the time discretization and the number of Gauss-Legendre quadrature points $n_{GP}$ for the calculation of the thermodynamically compatible flux \eqref{eqn.p.scheme.diss}. \vspace{-3mm} } 
	\begin{center} 
		\begin{tabular}{llllll} 
			\hline
			CFL & 0.5 & 0.4 & 0.3 & 0.2 & 0.1   \\ 
			\hline
			\multicolumn{6}{c}{semi-discrete HTC scheme + TVD Runge-Kutta $\mathcal{O}3$ } \\ 
			\hline			
			$n_{GP} = 3$ & $2.90 \cdot 10^{-5}$ & $1.55 \cdot 10^{-5}$ &$6.30 \cdot 10^{-6}$ &$2.00 \cdot 10^{-6}$ &$3.00 \cdot 10^{-7}$    \\ 
			$n_{GP} = 5$ & $2.90 \cdot 10^{-5}$ & $1.54 \cdot 10^{-5}$ &$6.31 \cdot 10^{-5}$ &$1.99 \cdot 10^{-5}$ &$2.45 \cdot 10^{-7}$    \\ 
			\hline
			\multicolumn{6}{c}{semi-discrete HTC scheme  + classical Runge-Kutta $\mathcal{O}4$ } \\ 
			\hline			
			$n_{GP} = 3$ & $2.23 \cdot 10^{-6}$ & $9.51 \cdot 10^{-7}$ &$3.07 \cdot 10^{-7}$ &$5.25 \cdot 10^{-8}$ &$8.33 \cdot 10^{-9}$    \\ 
			$n_{GP} = 5$ & $2.24 \cdot 10^{-6}$ & $9.64 \cdot 10^{-7}$ &$3.19 \cdot 10^{-7}$ &$6.53 \cdot 10^{-8}$ &$4.43 \cdot 10^{-9}$    \\ 
			\hline
			\multicolumn{6}{c}{Fully-discrete HTC scheme } \\ 			
			\hline
			$n_{GP} = 3$ & $1.80 \cdot 10^{-9}$ & $1.81 \cdot 10^{-9}$ &$1.82 \cdot 10^{-9}$ &$1.83 \cdot 10^{-9}$ &$1.84 \cdot 10^{-9}$    \\ 
			$n_{GP} = 5$ & $2.70 \cdot 10^{-13}$ & $2.70 \cdot 10^{-13}$ & $2.70 \cdot 10^{-13}$ & $2.70 \cdot 10^{-13}$ & $2.70 \cdot 10^{-13}$   \\ 
			\hline			
		\end{tabular}
	\end{center} 
}
	\label{tab.energy.error.analysis}
\end{table}

\begin{figure}[!htbp]
	\begin{center}
			\includegraphics[trim=10 10 10 10,clip,width=0.45\textwidth]{./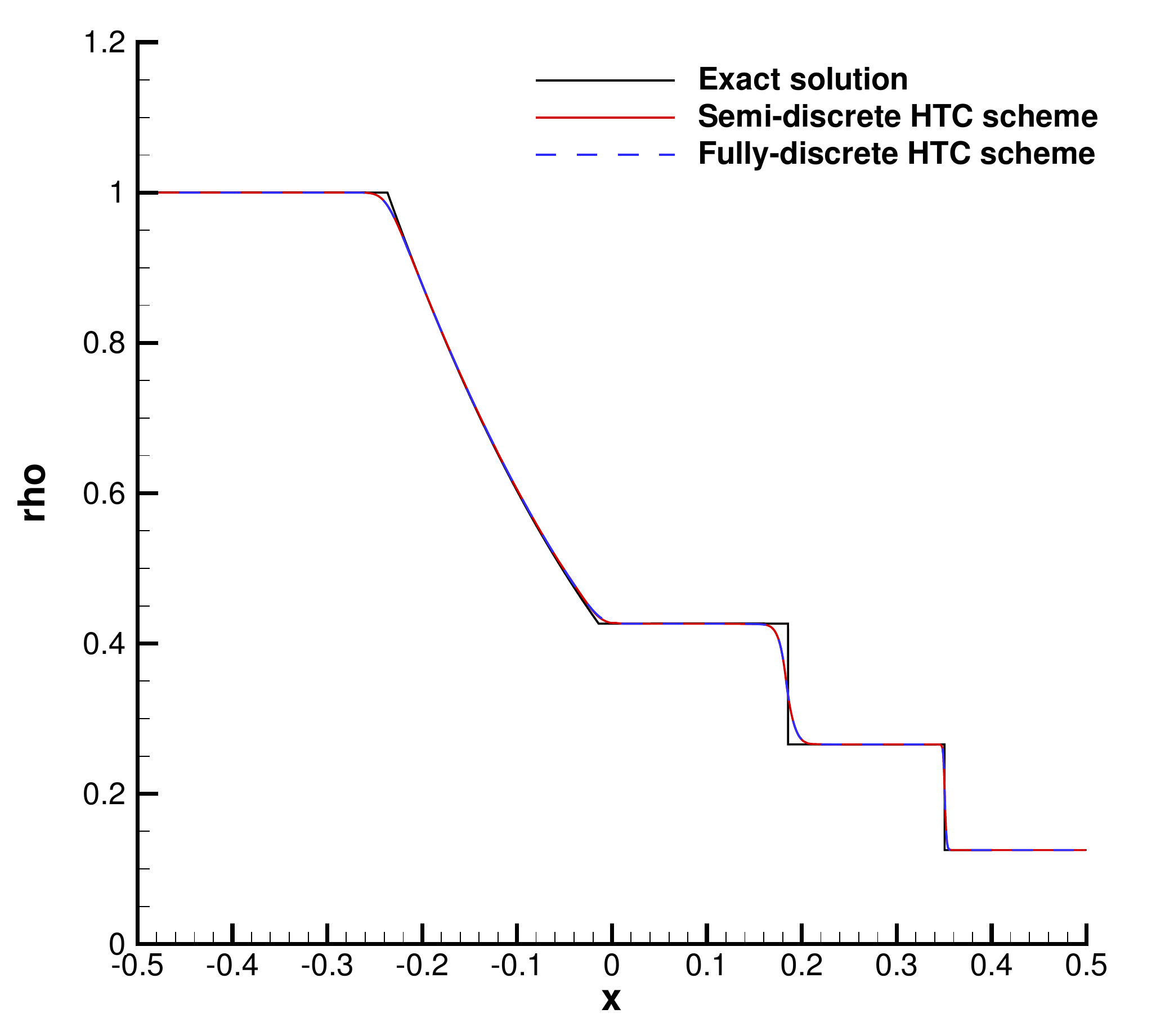}   
			\includegraphics[trim=10 10 10 10,clip,width=0.45\textwidth]{./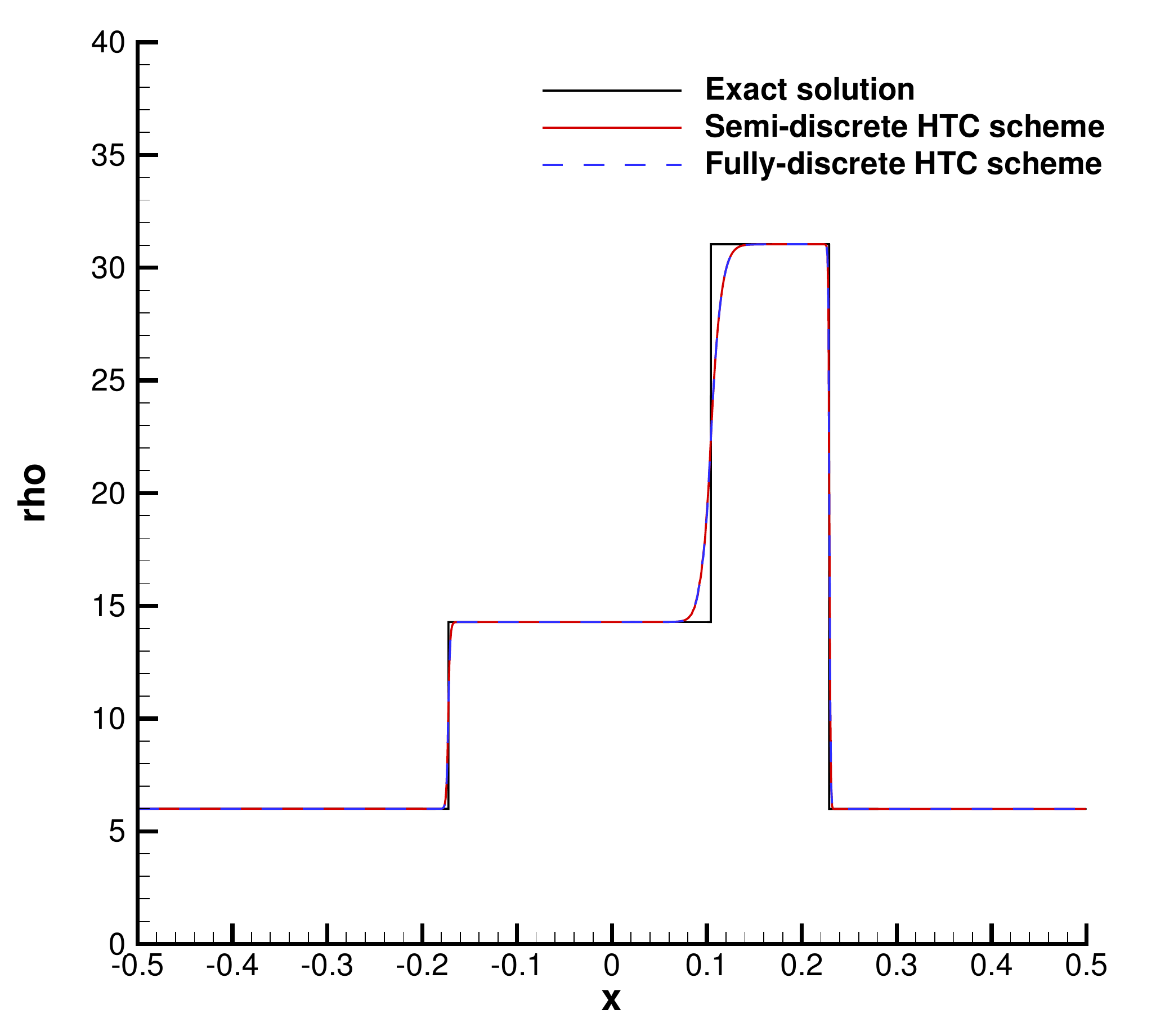}    
		\caption{Numerical results for Riemann problems RP1 ($x_c=0$) and RP2 ($x_c=-0.2$) at times $t=0.2$ and $t=0.035$, respectively, obtained with the semi-discrete (red solid line) and the fully-discrete (dashed blue line) HTC schemes on 1024 elements applied to the compressible Euler equations. The exact solution of the compressible Euler equations is represented by the black solid line. }  
		\label{fig.RP12}
	\end{center}
\end{figure}

\begin{figure}[!htbp]
	\begin{center}
		\includegraphics[trim=10 10 10 10,clip,width=0.45\textwidth]{./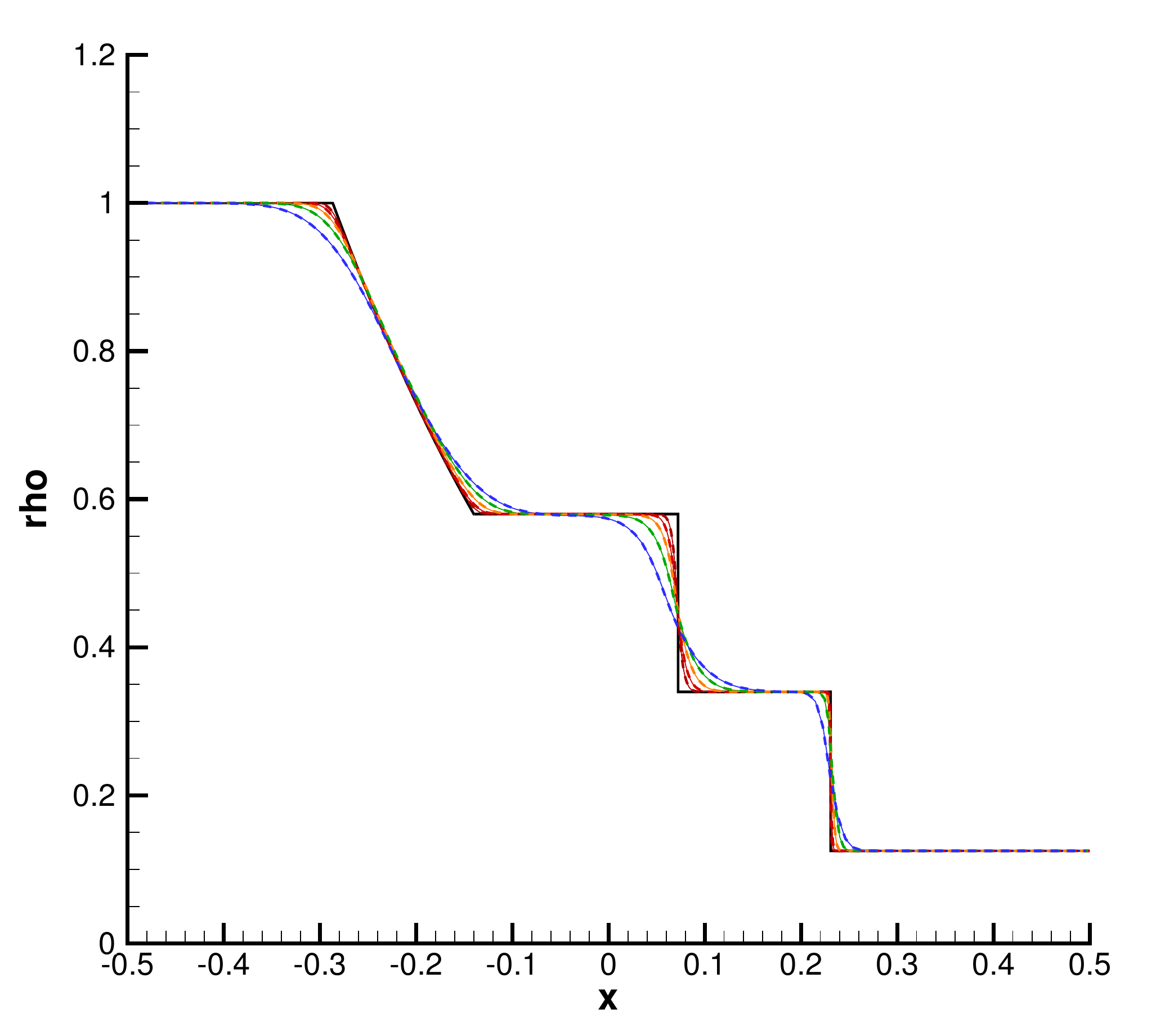}    
		\includegraphics[trim=10 10 10 10,clip,width=0.45\textwidth]{./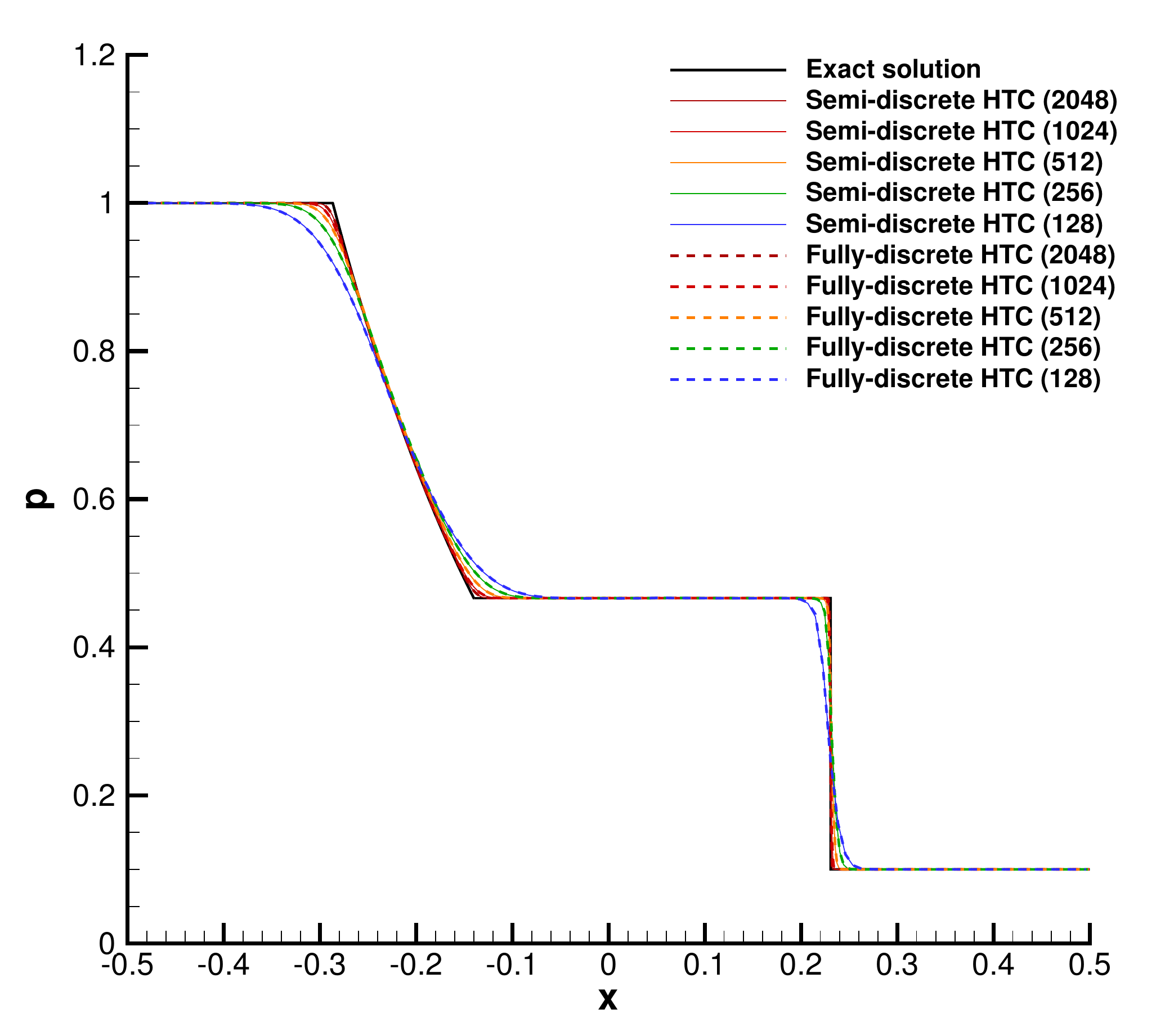}     \\ 
		\includegraphics[trim=10 10 10 10,clip,width=0.45\textwidth]{./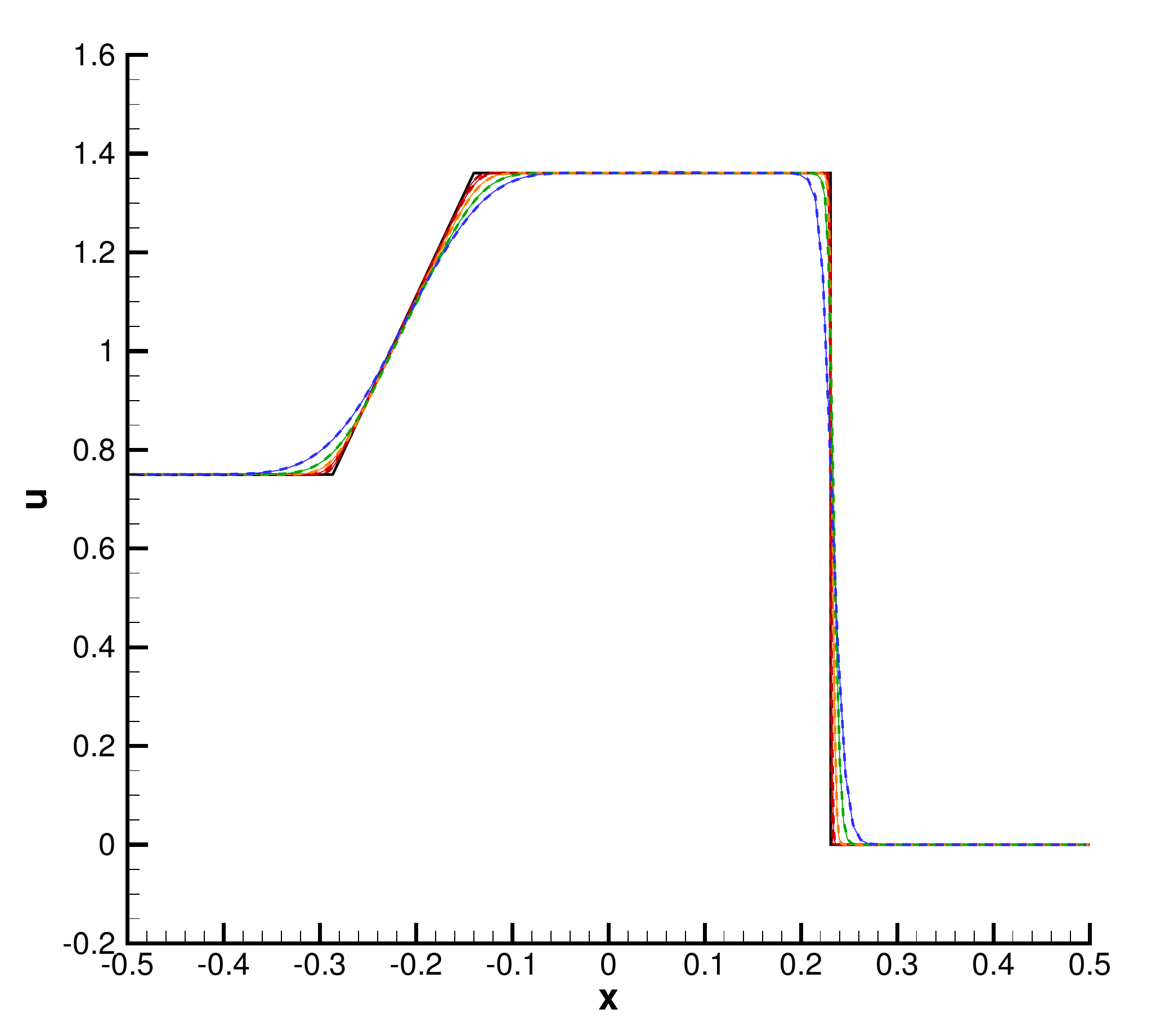}    
		\includegraphics[trim=10 10 10 10,clip,width=0.45\textwidth]{./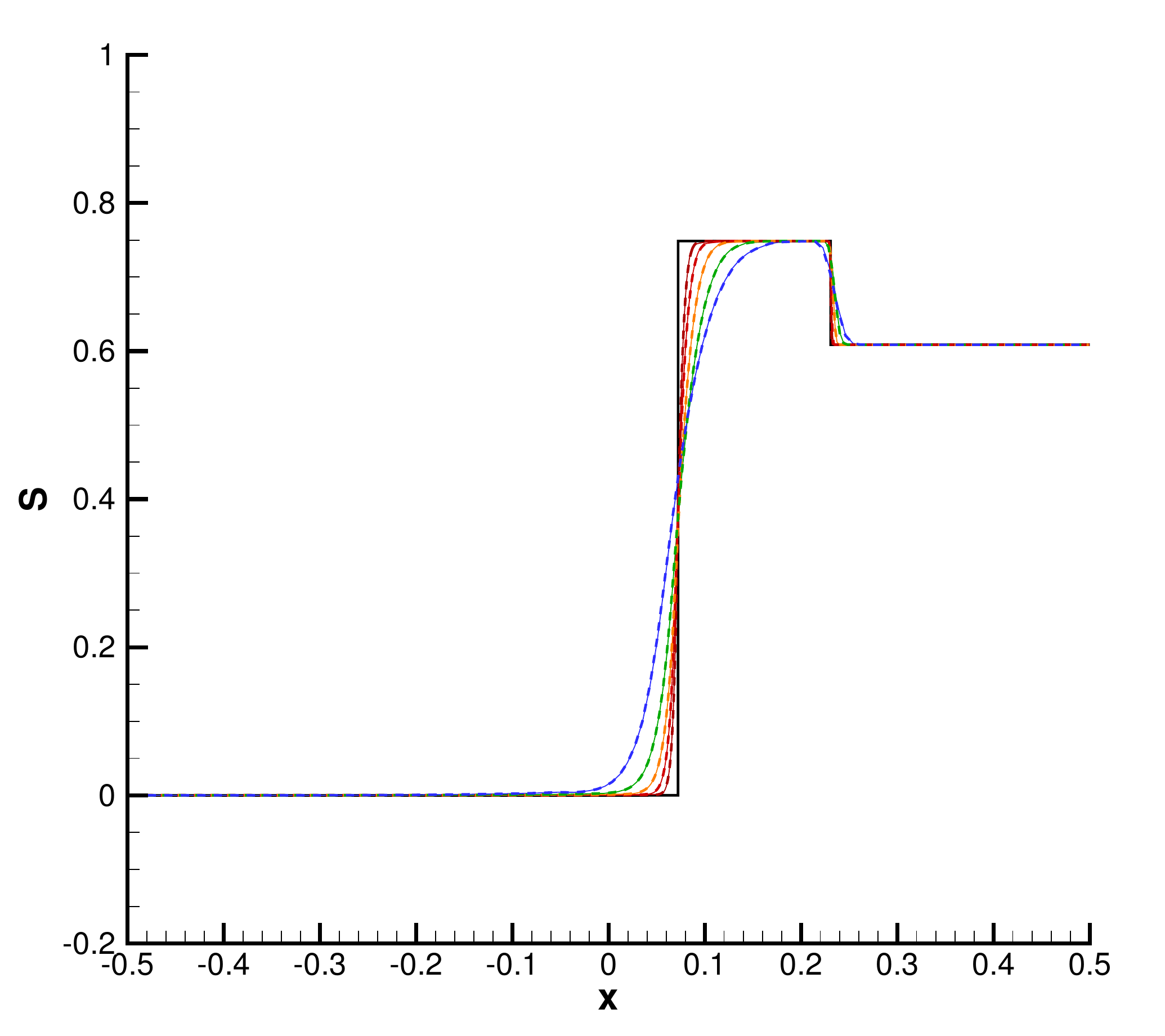}      
		\caption{\textcolor{black}{Numerical results for Riemann problem RP1s ($x_c=-0.2$) at time $t=0.2$ obtained with the semi-discrete (solid lines) and the fully-discrete (dashed lines) HTC schemes on 2048, 1024, 512, 256 and 128 elements applied to the compressible Euler equations. The exact solution of the compressible Euler equations is represented by the black solid line.} }  
		\label{fig.RP1s}
	\end{center}
\end{figure}

\begin{figure}[!htbp]
	\begin{center}
			\includegraphics[trim=10 10 10 10,clip,width=0.45\textwidth]{./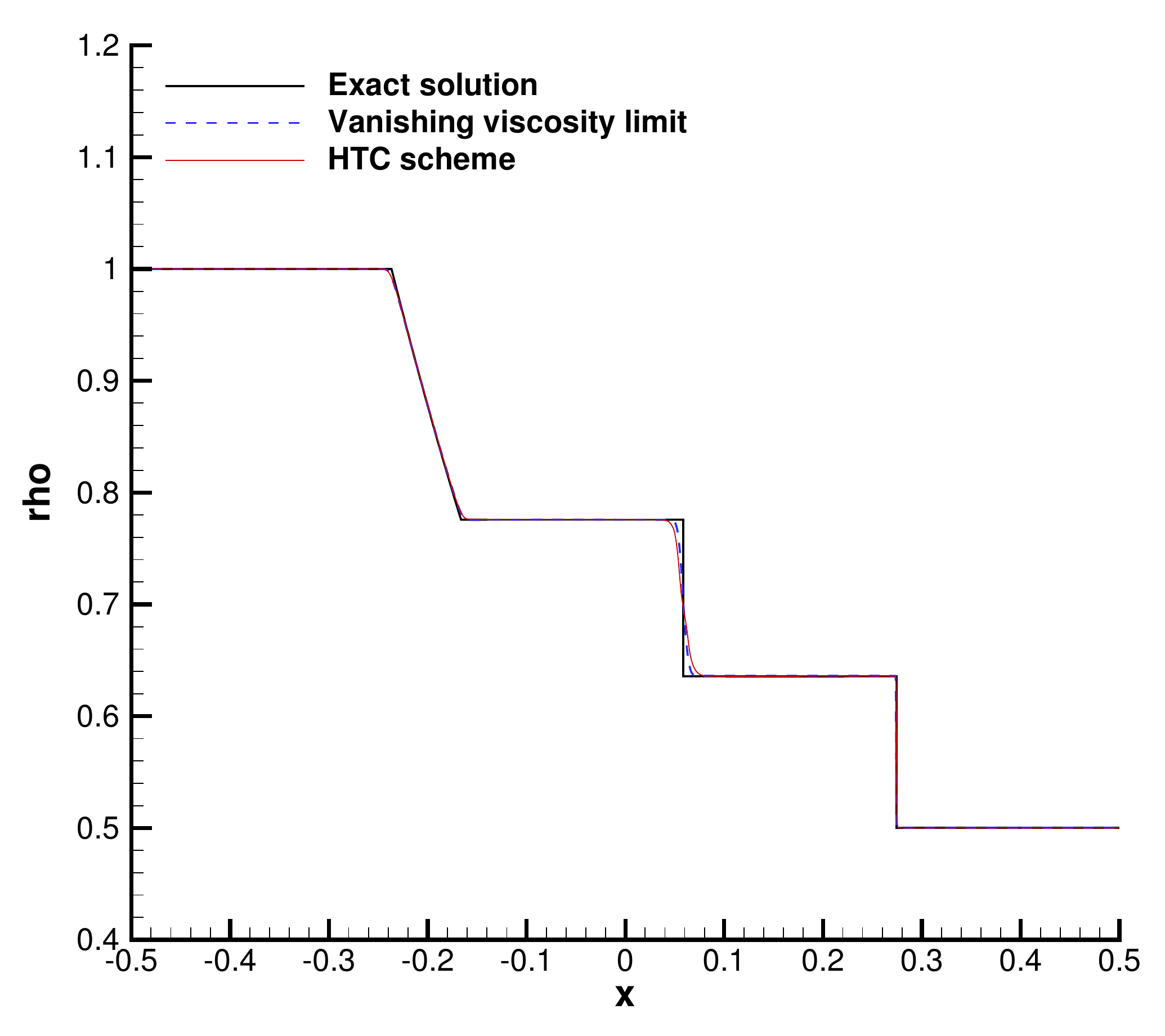}    
			\includegraphics[trim=10 10 10 10,clip,width=0.45\textwidth]{./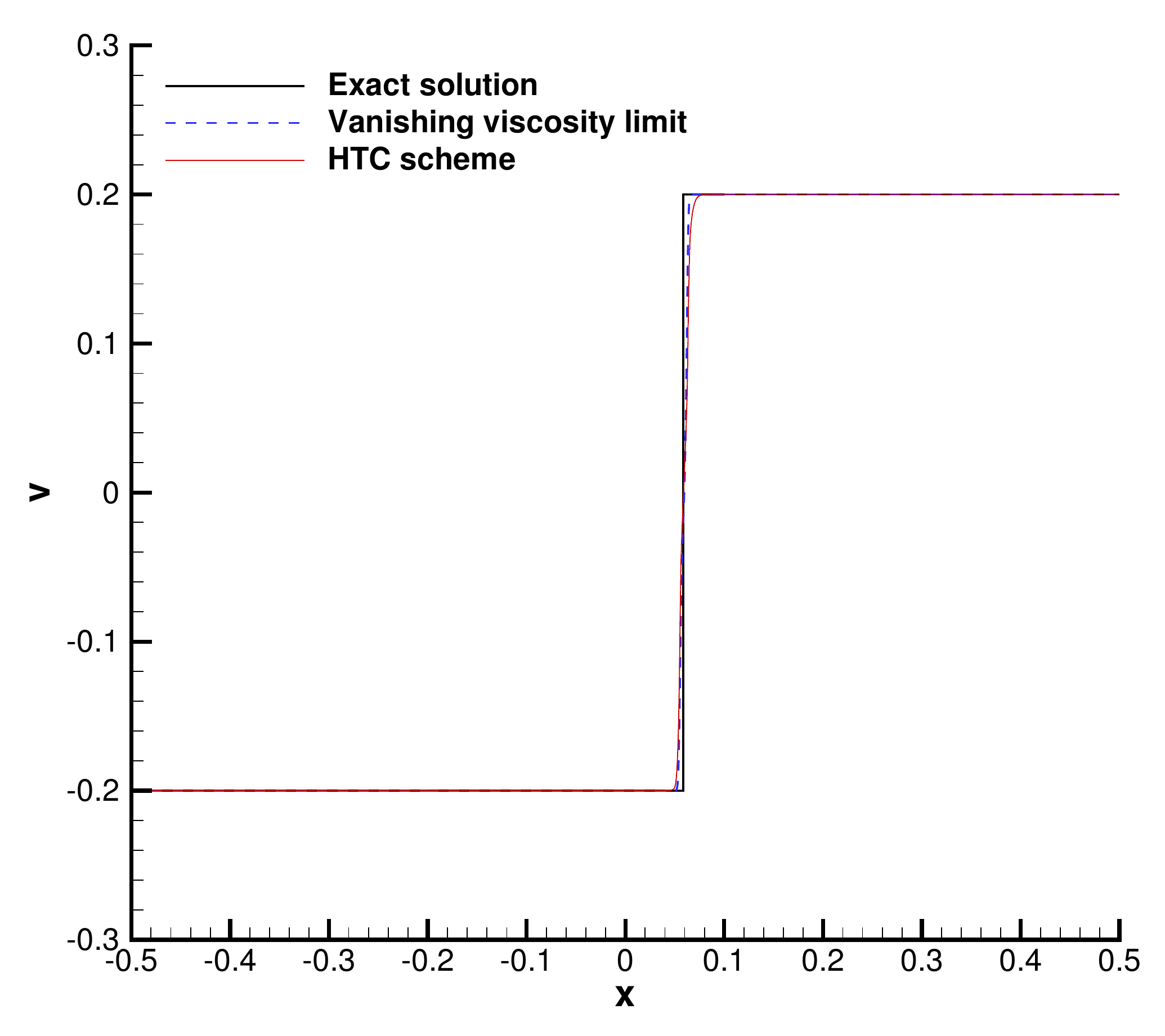}    
		\caption{Numerical results at time $t=0.2$ for Riemann problem RP3 ($x_c=0$) obtained with the HTC scheme (red solid line) on 1024 elements, the fourth order ADER-DG scheme applied to the vanishing viscosity limit of the viscous equations \eqref{eqn.conti}-\eqref{eqn.entropy} using $\epsilon = 2 \cdot 10^{-5}$ on 14400 elements (dashed blue line) and the exact solution of the compressible Euler equations (black solid line).   } 
		\label{fig.RP3}
	\end{center}
\end{figure}

\begin{figure}[!htbp]
	\begin{center}
			\includegraphics[trim=10 10 10 10,clip,width=0.45\textwidth]{./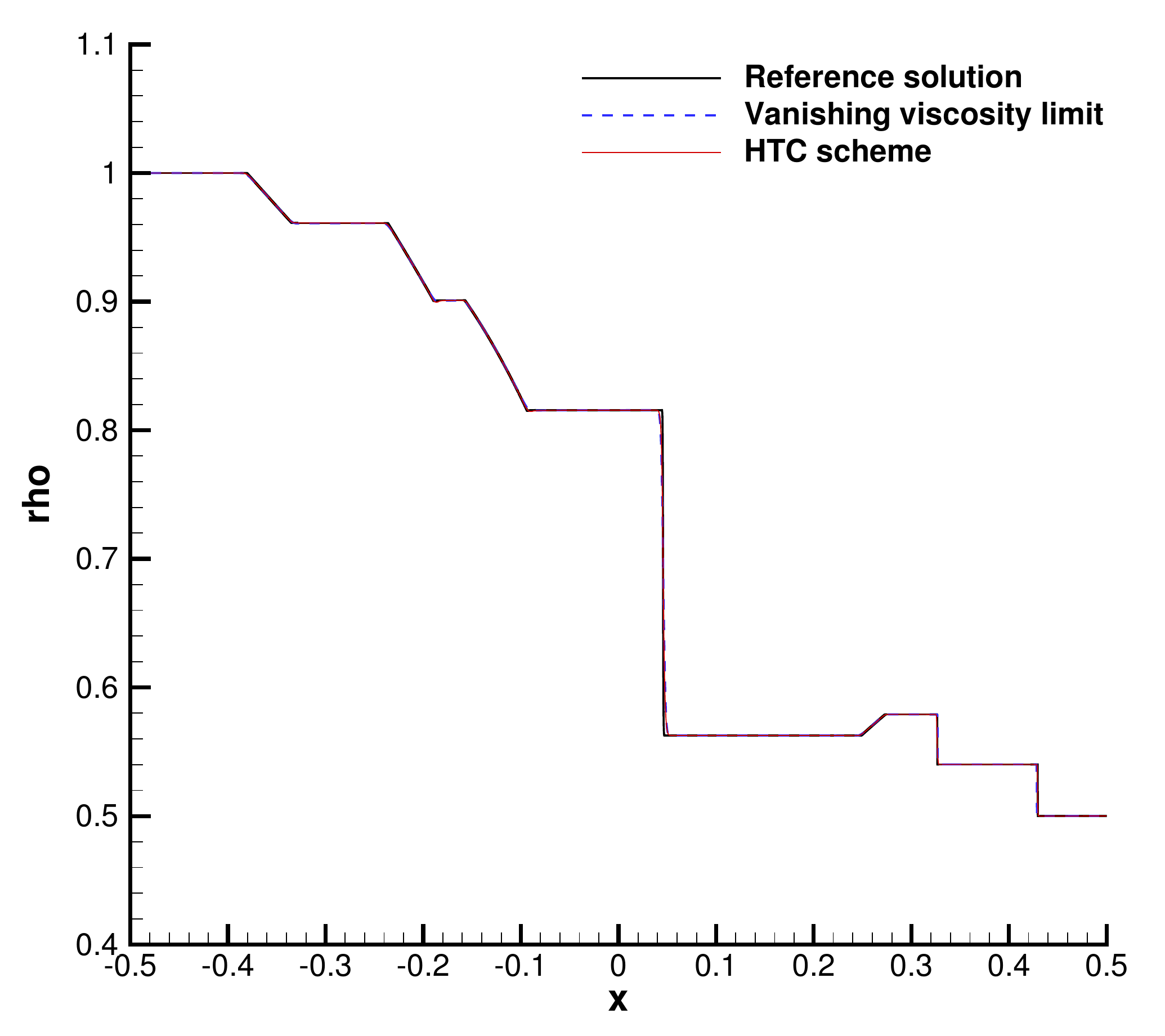}    
			\includegraphics[trim=10 10 10 10,clip,width=0.45\textwidth]{./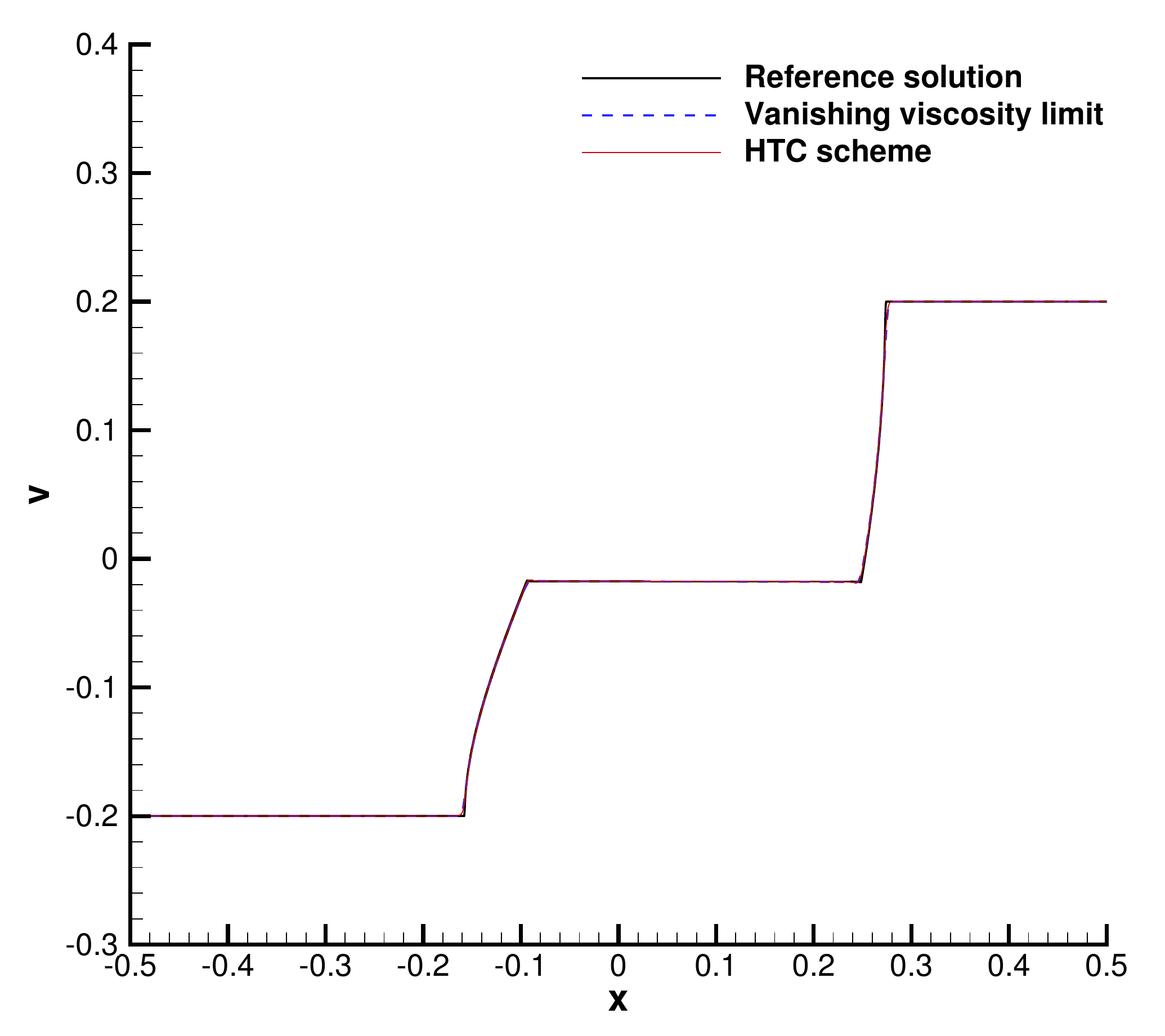}  \\  
			\includegraphics[trim=10 10 10 10,clip,width=0.45\textwidth]{./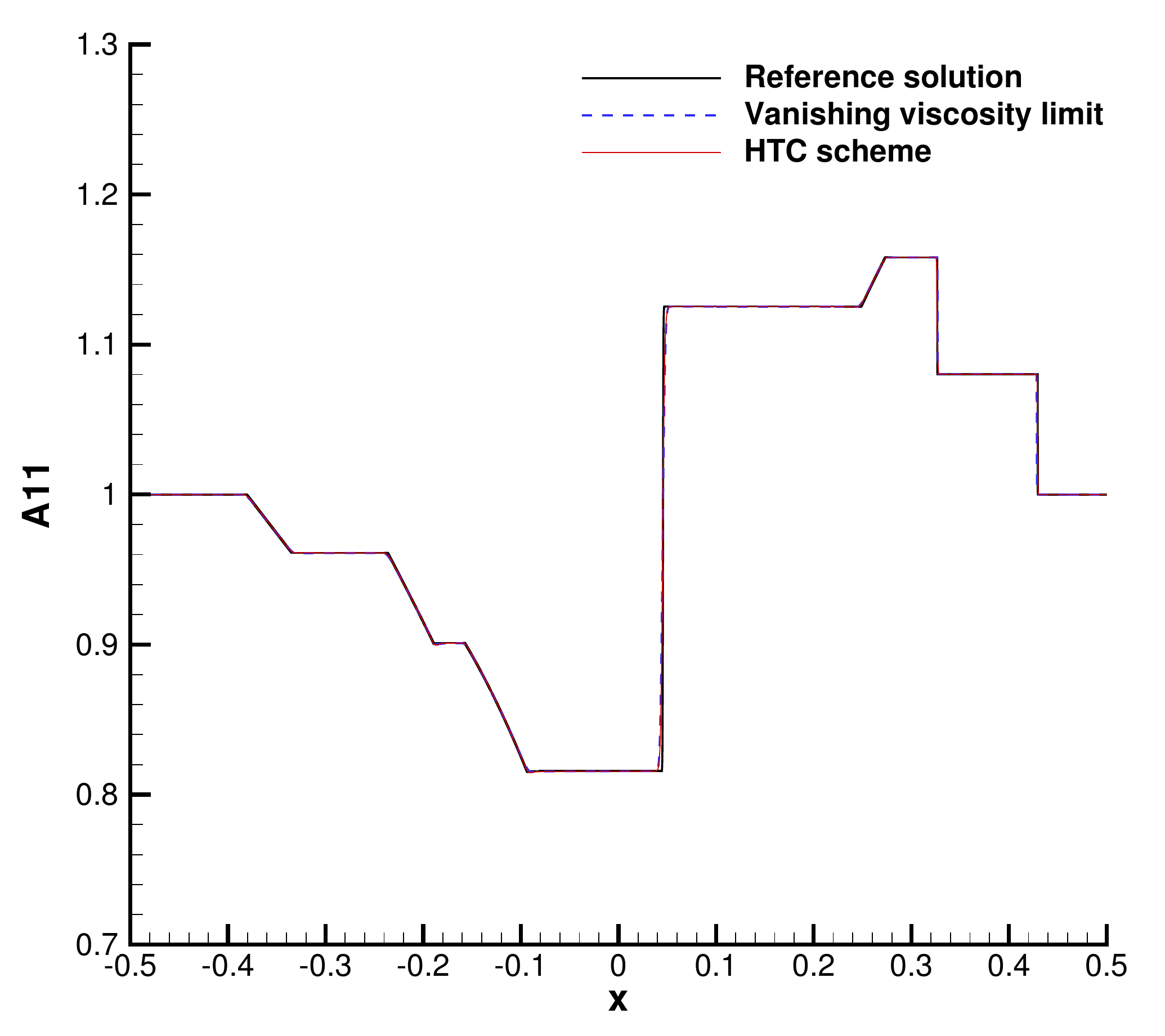}    
			\includegraphics[trim=10 10 10 10,clip,width=0.45\textwidth]{./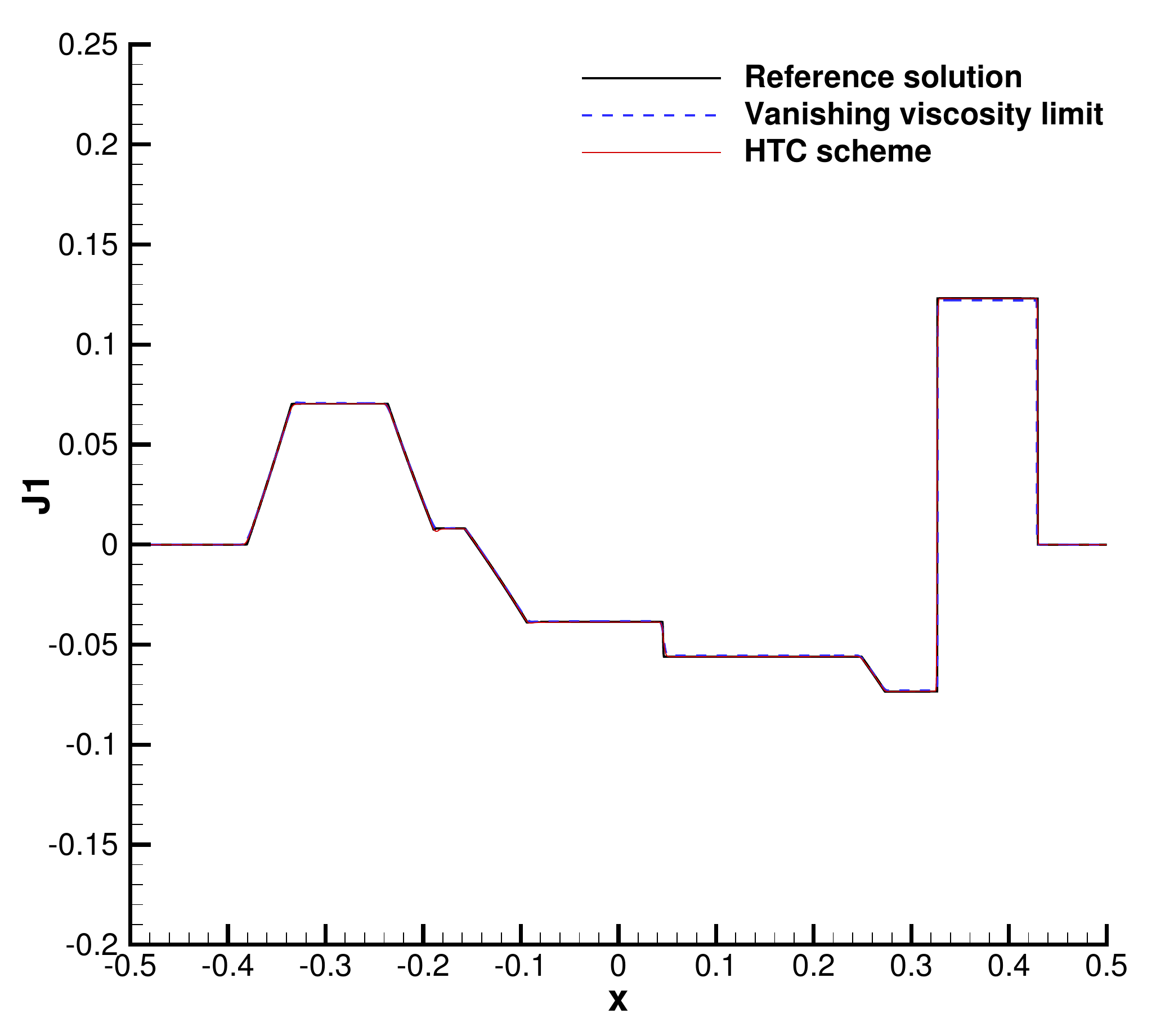}  
		\caption{Numerical results at time $t=0.2$ for Riemann problem RP4 ($x_c=0$) obtained with the HTC 
		scheme (red solid line) on 10000 control volumes, a fourth order ADER-DG scheme applied to 
		the vanishing viscosity limit of the viscous equations 
		\eqref{eqn.conti}-\eqref{eqn.entropy} using $\epsilon = 2 \cdot 10^{-5}$ (dashed blue line) 
		on 144000 elements and the reference solution obtained with a MUSCL-Hancock scheme applied 
		to the model with the energy conservation law \eqref{eqn.energy} instead of the entropy 
		inequality \eqref{eqn.entropy} (black solid line) using 128000 elements.   } 
		\vspace*{-4mm}
		\label{fig.RP4}
	\end{center}
\end{figure}

\subsection{Viscous shock wave} 
\label{sec.viscshock} 

Consider a stationary viscous shock wave at a shock Mach number of $M_s=2$. 
For Prandtl number Pr$=0.75$ there exists an exact solution of the 
compressible Navier-Stokes equations, see \cite{Becker1923,GPRmodel}. 
The computational domain $\Omega=[-0.5,+0.5]$ is covered with $1024$ control volumes and the shock 
wave is centered at $x=0$. We assume that the fluid is moving into the shock wave from right to left. 
The data in front of the shock are $\rho_0 =1$, $v^0_1=-2$, $v^0_2=v_3=0$ and $p^0=1/\gamma$
so that the associated sound speed is $c^0 = 1$ and the corresponding Reynolds number based on a 
reference length $L=1$ is given by $Re_s= \rho^0 \, c^0 \, M_s \, L \, \mu^{-1}$. 
The parameters are set as $\gamma = 1.4$, $c_v = 2.5$, $c_h=c_s=50$, $\mu=2 
\cdot 10^{-2}$ and $\lambda = 9 \frac{1}{3} \cdot 10^{-2}$, hence the shock Reynolds number is 
$Re_s=100$. 
At $t=0$ we set $\AAA=\sqrt[3]{\rho} \, \mathbf{I}$ and $\mathbf{J}=\mathbf{0}$. 
The comparison between the numerical solution obtained with the semi-discrete HTC scheme 
applied to \eqref{eqn.GPR} and the exact solution of the compressible Navier-Stokes equations 
is shown in Fig. \ref{fig.vshock}. For all quantities an excellent agreement is achieved. 

\begin{figure}[!htbp]
	\begin{center}
		\includegraphics[trim=5 10 10 10,clip,width=0.32\textwidth]{./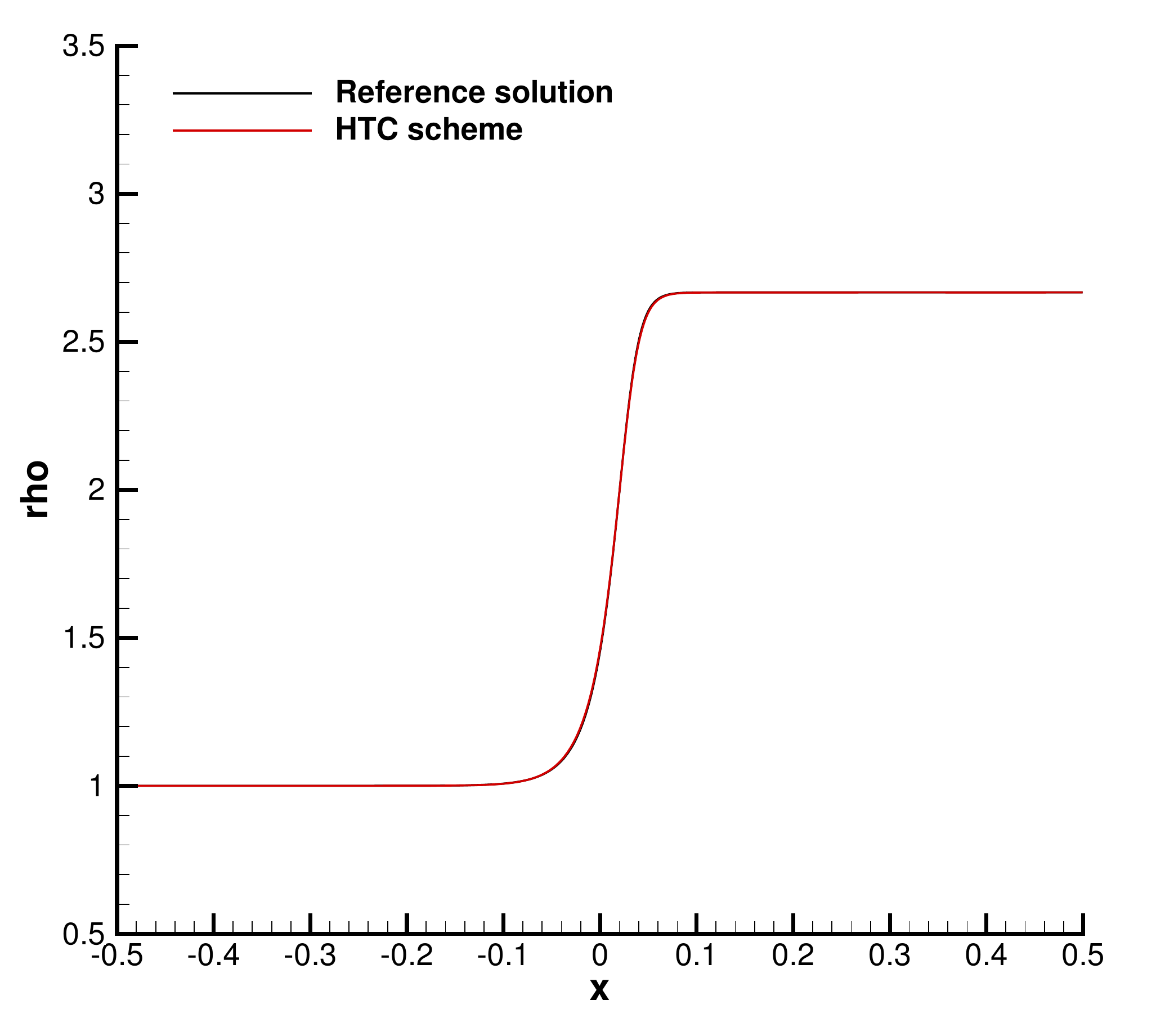}   
		\includegraphics[trim=5 10 10 10,clip,width=0.32\textwidth]{./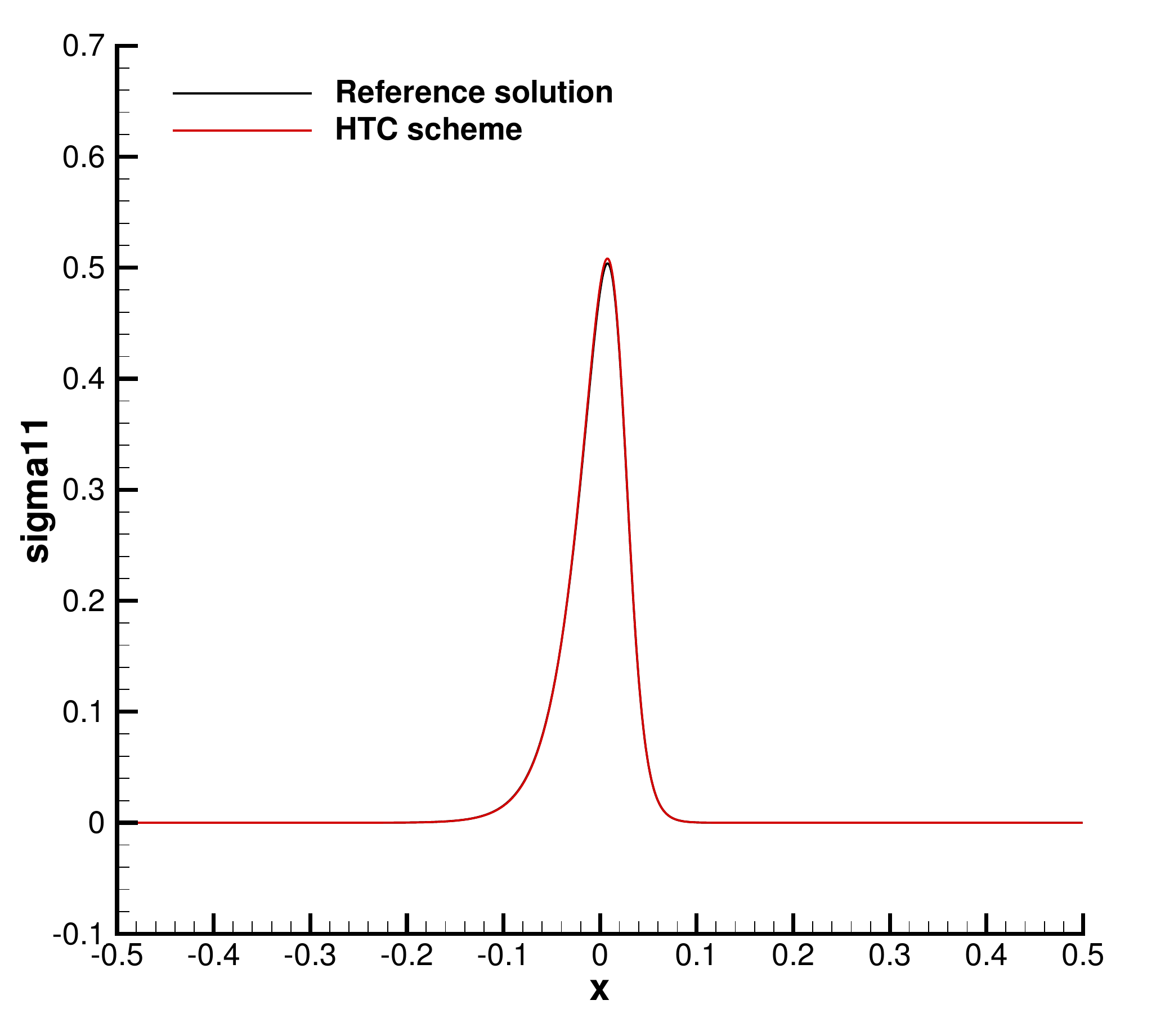}   
		\includegraphics[trim=5 10 10 10,clip,width=0.32\textwidth]{./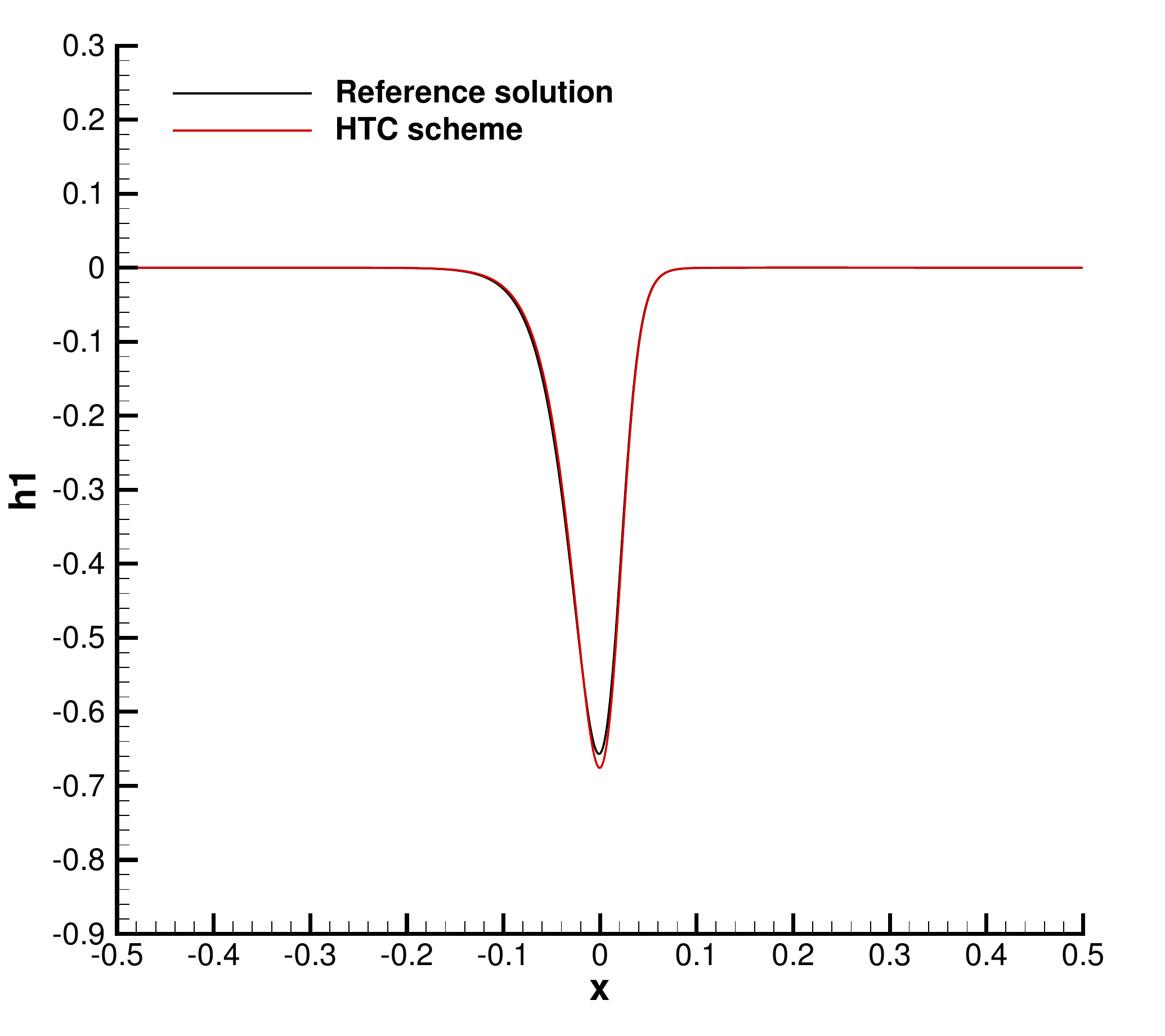} 
		\vspace*{-2mm}  
		\caption{Exact solution of the compressible Navier-Stokes equations and numerical solution 
		obtained with the HTC scheme applied to the GPR model for a viscous shock at $M_s=2$, 
		$Re_s=100$ and $Pr=0.75$. Density (left), stress $\sigma_{11}$ (center) and heat flux $h_1$ 
		(right) at time $t=0.25$. } 
		\vspace*{-4mm}
		\label{fig.vshock}
	\end{center}
\end{figure}

\subsection{Solid rotor problem} 
\label{sec.rotor} 

In this section we solve the solid rotor problem proposed in \cite{SIGPR}.  
By setting $\tau_1 = \tau_2 = 10^{20}$ the model \eqref{eqn.GPR} describes a nonlinear hyperelastic solid. The computational domain is $\Omega = [-1,+1]^2$ with periodic 
boundary conditions everywhere. The initial data for density, pressure, $\AAA$ and $\mathbf{J}$ 
is set to $\rho = 1$, $p = 1$, $\mathbf{A} = \mathbf{I}$ and $\mathbf{J}=\mathbf{0}$, while the 
initial 
condition for the velocity field is $v_1 = -y/R$, $v_2 = +x/R$ and $v_3=0$ within the circular 
region $r \leq R$, where $r = \left\| \mathbf{x} \right\|$ and $R=0.2$, while $\mathbf{v}=0$ for 
$r > R$. The parameters of the GPR model are set to $\gamma = 1.4$, $c_s = 1.0$ and 
$c_h = 1.0$. We run the test problem until a final time of $t=0.3$ using the two-dimensional 
semi-discrete HTC scheme for the GPR model on a uniform Cartesian grid composed of $512 \times 512$ 
elements. The artificial viscosity in the HTC scheme is set to a constant value of $\epsilon = 5 
\cdot 10^{-4}$. To obtain a reference solution, on the same mesh of $512 \times 512$ elements we 
solve the same problem again but using a classical second order MUSCL-Hancock scheme, see 
\cite{toro-book} for details. We emphasize that in the MUSCL scheme, which is not thermodynamically 
compatible, we solve the total energy conservation law \eqref{eqn.energy} rather than the entropy 
inequality \eqref{eqn.entropy}, as already suggested in \cite{GPRmodel}. The obtained results are 
compared with each other in Fig.~\ref{fig.solidrotor}, 
where the contour colors of the velocity component $v_1$ are shown. The agreement between the numerical solution obtained with the new HTC scheme and the reference solution is very good. Since the applied 
HTC scheme for this test problem is only compatible with the \textit{semi-discrete} total energy conservation law, we have explicitly monitored the total energy conservation error during the 
entire simulation, finding a maximum relative energy conservation error of $4.02 \cdot 10^{-7}$.

\begin{figure}[!htbp]
	\begin{center}
		\begin{tabular}{cc} 
			\includegraphics[trim=40 40 40 50,clip,width=0.4\textwidth]{./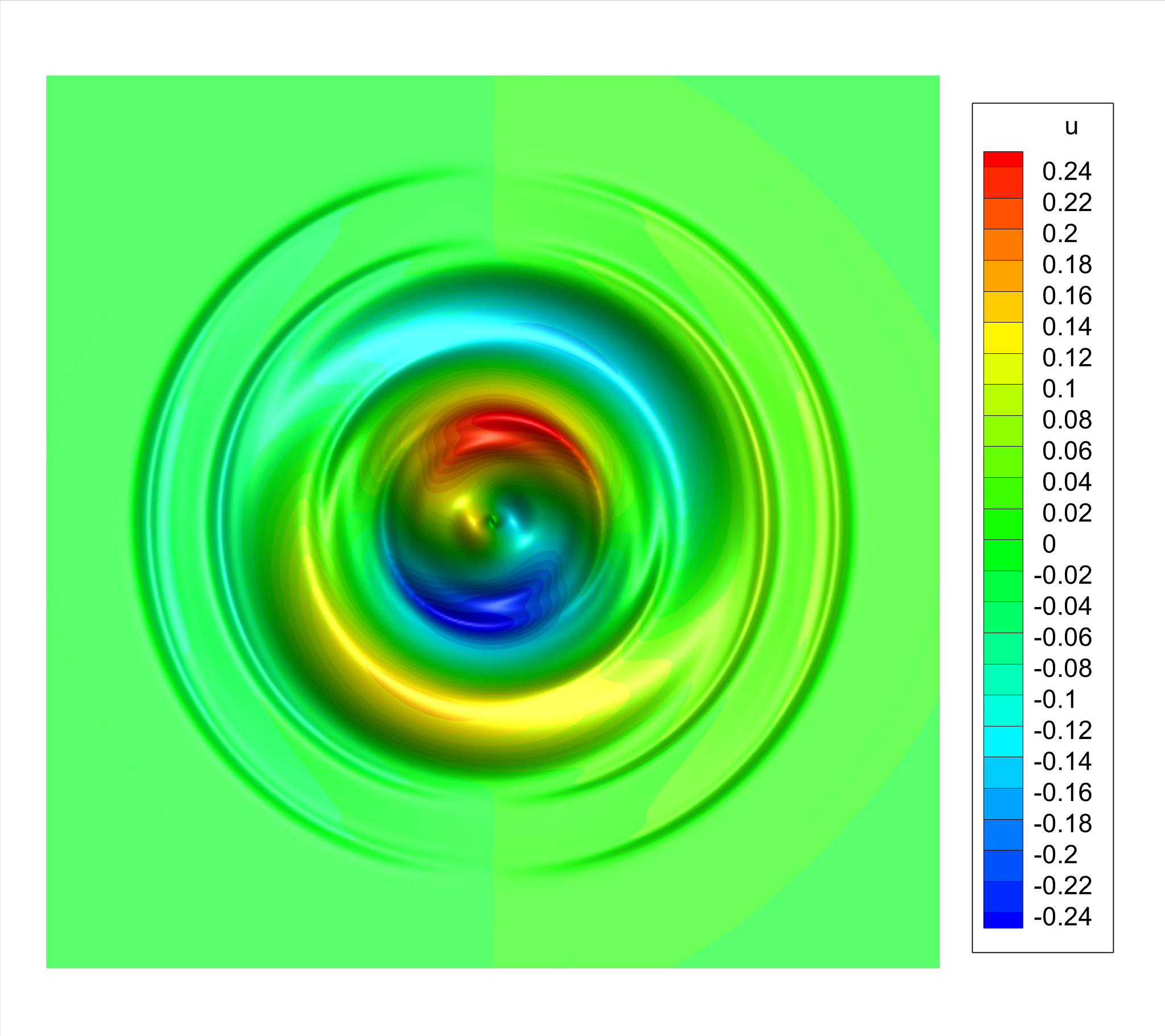}    
			& 
			\includegraphics[trim=40 40 40 50,clip,width=0.4\textwidth]{./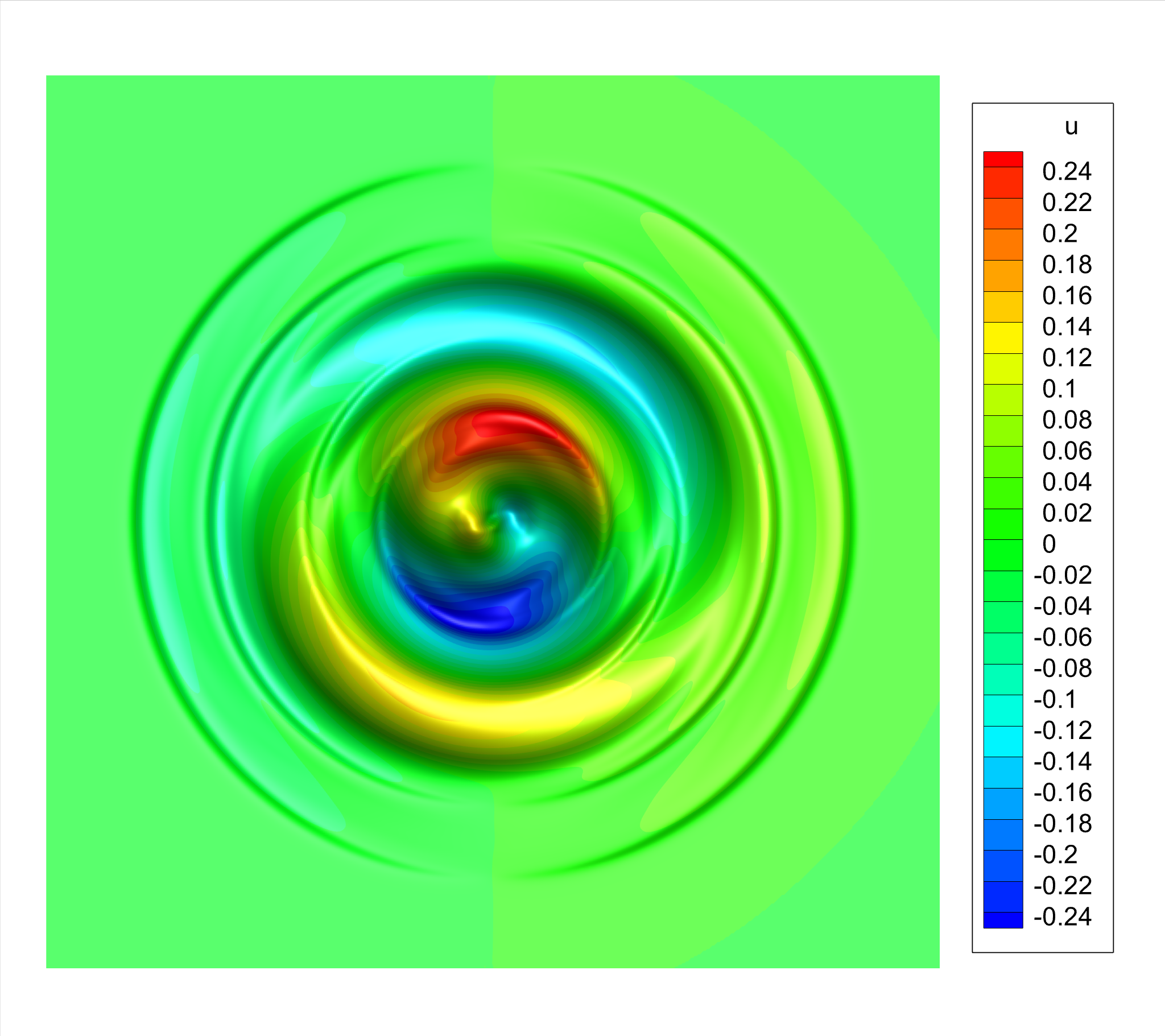}   
		\end{tabular}
		\vspace*{-3mm}
		\caption{Velocity component $v_1$ for the solid rotor test problem at time $t=0.3$ obtained by solving \eqref{eqn.GPR} with the new HTC scheme (left) and by using a classical 
			MUSCL scheme (right). }  
		\vspace*{-4mm}
		\label{fig.solidrotor}
	\end{center}
\end{figure}

\subsection{Double shear layer} 
\label{sec.dsl} 

In this section we present numerical results for the double shear layer test, see \cite{Bell1989,GPRmodel,Hybrid1,Hybrid2}. 
The computational domain is $\Omega=[0,1]^2$ with periodic boundary conditions everywhere. The initial condition is given by 
%\begin{equation*}
%v_1=\left\{
%\begin{array}{l}
%\tanh\left( \tilde{\rho} (y-0.25) \right), \qquad \textnormal{ if } y \leq 0.5, \\
%\tanh\left( \tilde{\rho} (0.75-y) \right), \qquad \textnormal{ if } y > 0.5,
%\end{array}
%\right.
%\end{equation*} 
$v_1 = \tanh\left( \tilde{\rho} (y-0.25) \right)$ for $y \leq 0.5$ and $v_1 = \tanh\left( \tilde{\rho} (0.75-y) \right)$ if $ y > 0.5$, $v_2= \delta \sin(2\pi x), v_3 = 0, \rho  = \rho_0 = 1,   p = 10^2 / \gamma,  
\AAA=\mathbf{I}, \mathbf{J}=\mathbf{0}$, 
with $\delta=0.05$ and  $\tilde{\rho}=30$. The remaining parameters of the GPR model are set to $\nu= \mu / 
\rho_0 = 2 \cdot 10^{-3}$, $\gamma = 1.4$, 
$\rho_0=1$, $c_v=1$, $c_s=8$, $c_h=2$ and $\tau_2 = 4 \cdot 10^{-3}$. \textcolor{black}{The characteristic Mach number of the flow resulting from this setup is $M=0.1$.} 
Calculations are performed 
with the new HTC scheme up to a final time of $t=1.8$. The computational grid is composed of 
$4000 \times 4000$ control volumes and the numerical viscosity is chosen as 
$\epsilon= 1 \cdot 10^{-6}$, hence three orders of magnitude lower than the physical one.
In Fig. \ref{fig.dslrot} the results obtained with the new HTC scheme 
are compared with a numerical reference solution that is based on the solution of the 
incompressible Navier-Stokes equations using a hybrid FV/FE method on a triangular grid made of 
$2097152$ elements ($N_{x} =1000$ divisions along each boundary), see 
\cite{busto2018projection,Hybrid1,Hybrid2} for  details. 
The flow dynamics has already been described in \cite{Bell1989,Hybrid1,Hybrid2,GPRmodel,SIGPR} and 
can be summarized by the development of several vortices from the initially perturbed shear layers. 
The agreement between the Navier-Stokes reference solution and the numerical solution of the GPR model computed with the new HTC schemes is rather good. 
In Fig. \ref{fig.dslA} we present the temporal evolution of the distortion field component $A_{12}$, which is qualitatively similar to the results shown in \cite{GPRmodel}, but for a lower physical viscosity $\mu$.  The maximum relative conservation error of the total energy monitored during the simulation for the semi-discrete HTC scheme was $7.12 \cdot 10^{-7}$. 
\textcolor{black}{Due to the low numerical viscosity of $\epsilon = 10^{-6}$ and fine mesh, one 
can observe small structures developing in the distortion field $\mathbf{A}$, which we would like 
to demonstrate in Fig. \ref{fig.dslA}.  }

\begin{figure}[!htbp]
	\begin{center}
		\begin{tabular}{cc} 
			\includegraphics[trim=20 40 40 50, clip,width=0.45\textwidth]{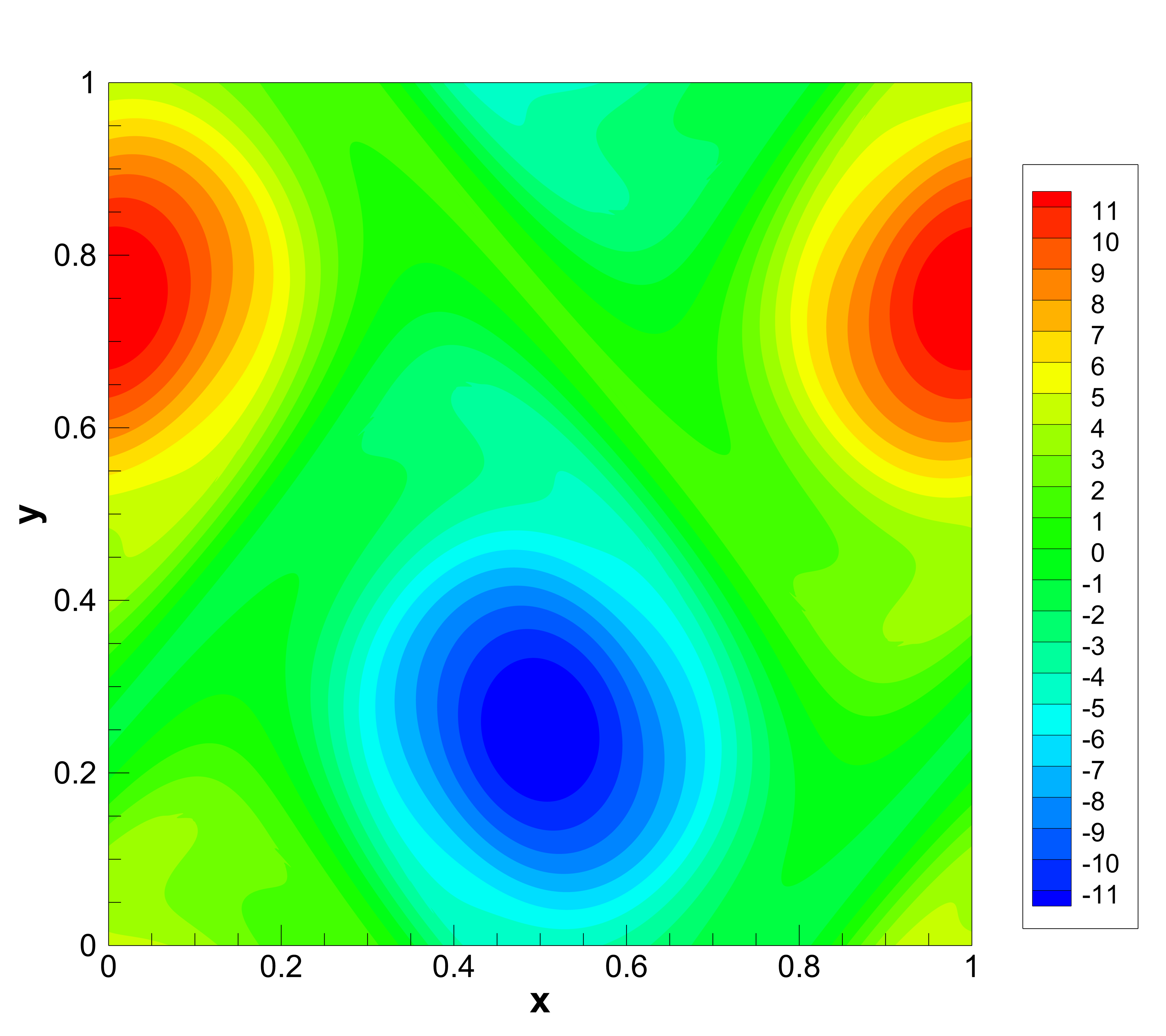}  & 
			\includegraphics[trim=20 40 40 50, 
			clip,width=0.45\textwidth]{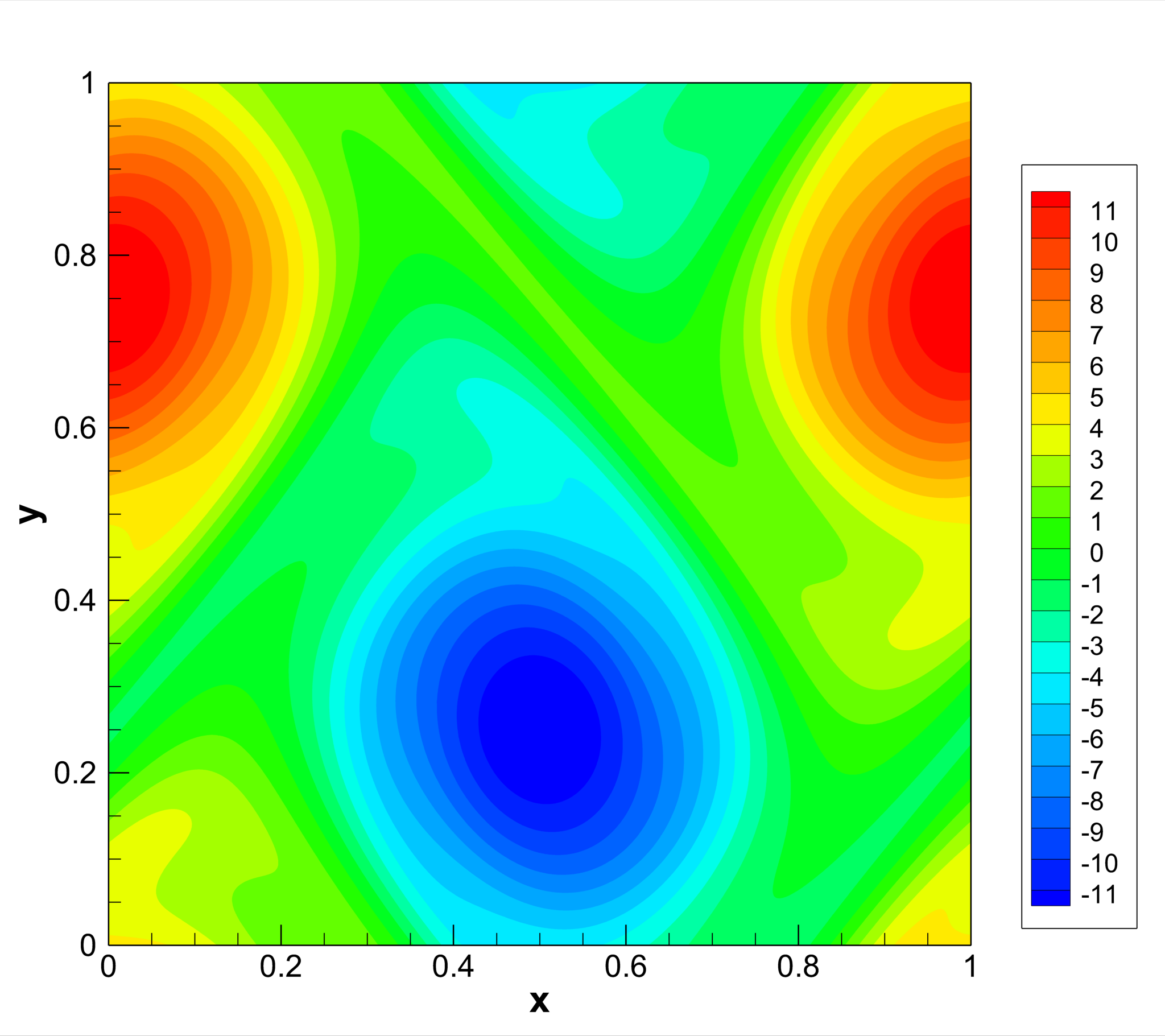}   
		\end{tabular}
		\vspace*{-3mm}
		\caption{Vorticity contours for the double shear layer with a viscosity of $\mu=2 \cdot 10^{-3}$ 
			at time $t=1.8$. 
			Left: numerical solution of the GPR model obtained with the new thermodynamically compatible finite volume scheme.  
			Right: reference solution obtained by solving the incompressible Navier-Stokes equations with the 
			staggered semi-implicit hybrid FV/FE scheme \cite{busto2018projection,Hybrid1,Hybrid2}. 
			} 
		\vspace*{-2mm}
		\label{fig.dslrot}
	\end{center}
\end{figure}

\begin{figure}[!htbp]
	\begin{center}
		\begin{tabular}{cc}
			\includegraphics[trim=20 40 40 50,clip,,width=0.45\textwidth]{./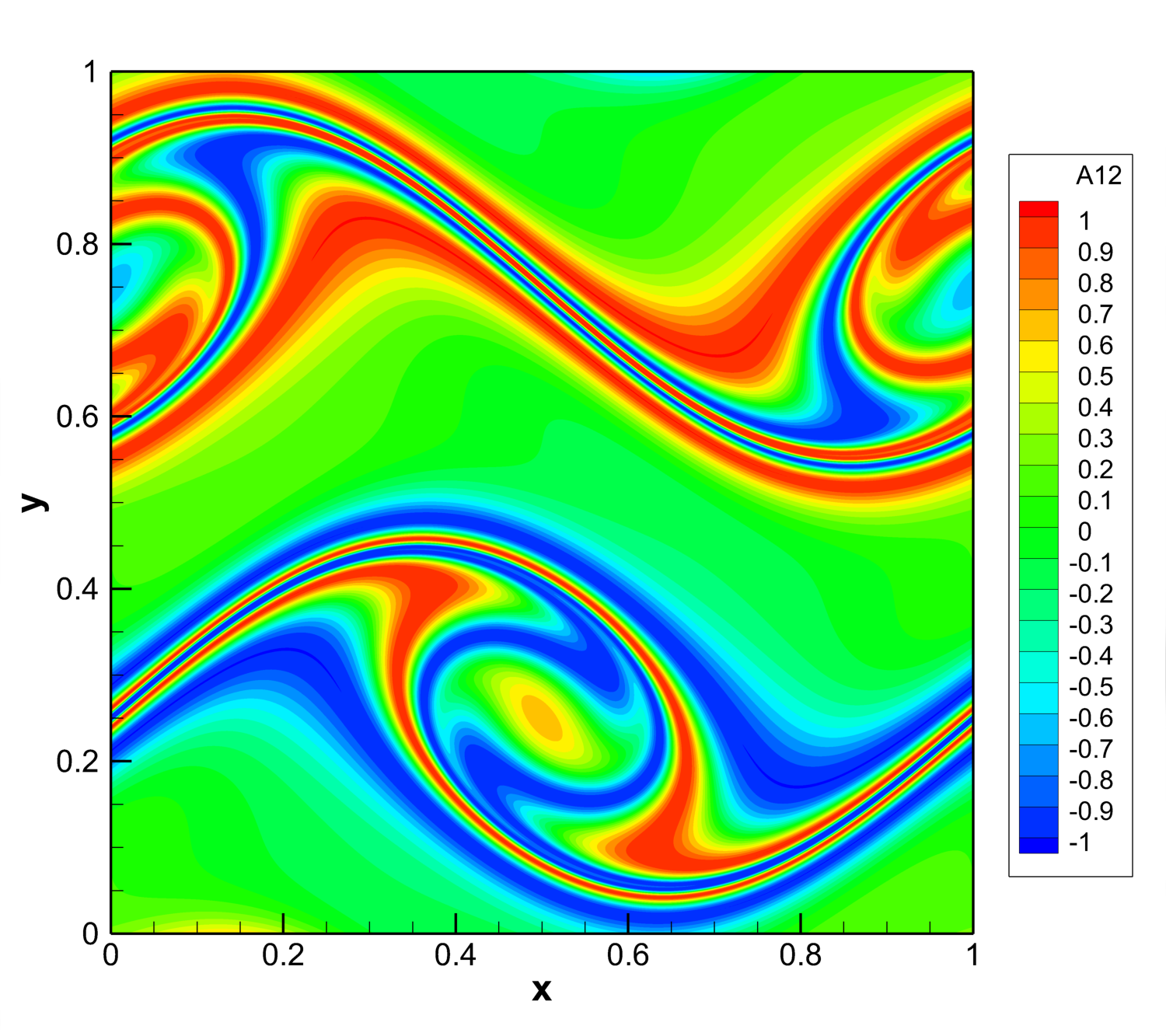} 
			& 
			\includegraphics[trim=20 40 40 
			50,clip,,width=0.45\textwidth]{./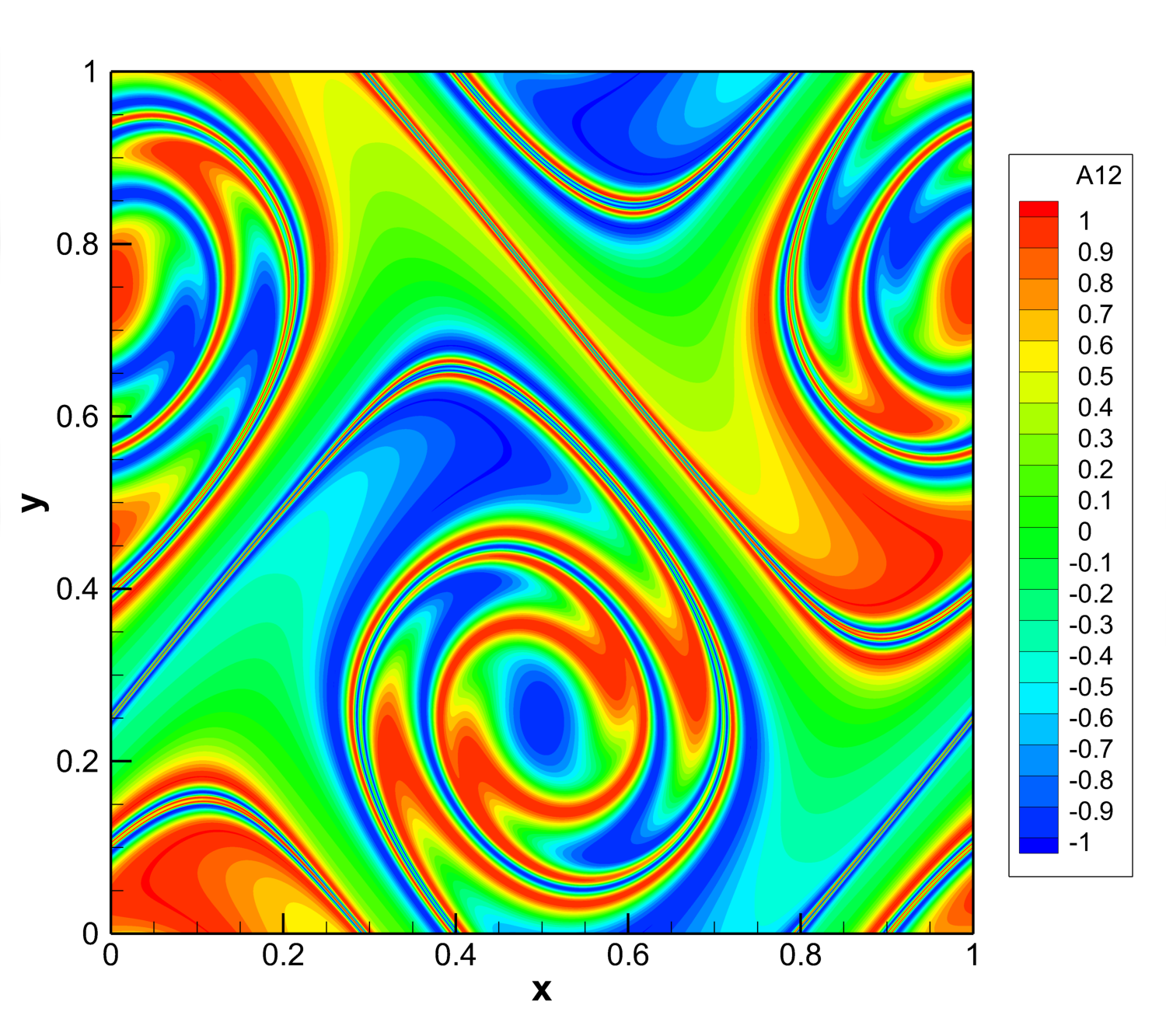}   
		\end{tabular}
		\vspace*{-2mm}
		\caption{Distortion field component $A_{12}$ for the double shear layer problem at times 
		$t=1.2$ and $t=1.8$ obtained by solving the GPR model ($\mu = 2 \cdot 10^{-3}$) with the 
		HTC scheme. }
		\vspace*{-2mm}  
		\label{fig.dslA}
	\end{center}
\end{figure}

\subsection{Lid-driven cavity} 

As last numerical test case for the fluid limit of the model \eqref{eqn.GPR} we present the lid-driven cavity problem, see \cite{Ghia1982}, which can be used to validate compressible flow solvers in the low Mach number regime, see e.g. \cite{TavelliDumbser2017,Hybrid1,Hybrid2} and which was already successfully solved with the GPR model in  \cite{GPRmodel,SIGPR}, but the schemes used in \cite{GPRmodel,SIGPR} were not thermodynamically compatible.  
The computational domain is $\Omega = [0,1] \times [0,1]$ and the initial condition is 
set to $\rho=1$, $\mathbf{v}=\mathbf{0}$, $p=10^2$,  $\AAA=\mathbf{I}$ and $\mathbf{J}=\mathbf{0}$. 
We 
furthermore 
set $\gamma=1.4$, $c_v = 1$, $c_s = 8$, $\rho_0=1$ and $c_h=2$,  $\tau_2=10^{-2}$ and $\mu=10^{-2}$ 
so that the Reynolds number of the test problem is $Re=100$. 
The lid velocity on the upper boundary is set to $\mathbf{v}=(1,0,0)$, while on all other boundaries $\mathbf{v}=\mathbf{0}$ is imposed. \textcolor{black}{The Mach number of this test is about $M=0.08$.} 
The new semi-discrete HTC scheme is run until $t=10$ using $256 \times 256$ elements and a constant artificial viscosity of $\epsilon=10^{-3}$. The numerical results are shown in Fig. \ref{fig.cavity}, where also a comparison with 
the Navier-Stokes reference solution of Ghia \textit{et al.} \cite{Ghia1982} is provided. We note an excellent agreement 
between the numerical solution of the GPR model and the incompressible Navier-Stokes reference solution. 
\begin{figure}[!htbp]
	\begin{center}
		\begin{tabular}{cc} 
			\includegraphics[trim=20 50 60 50,clip,width=0.45\textwidth]{./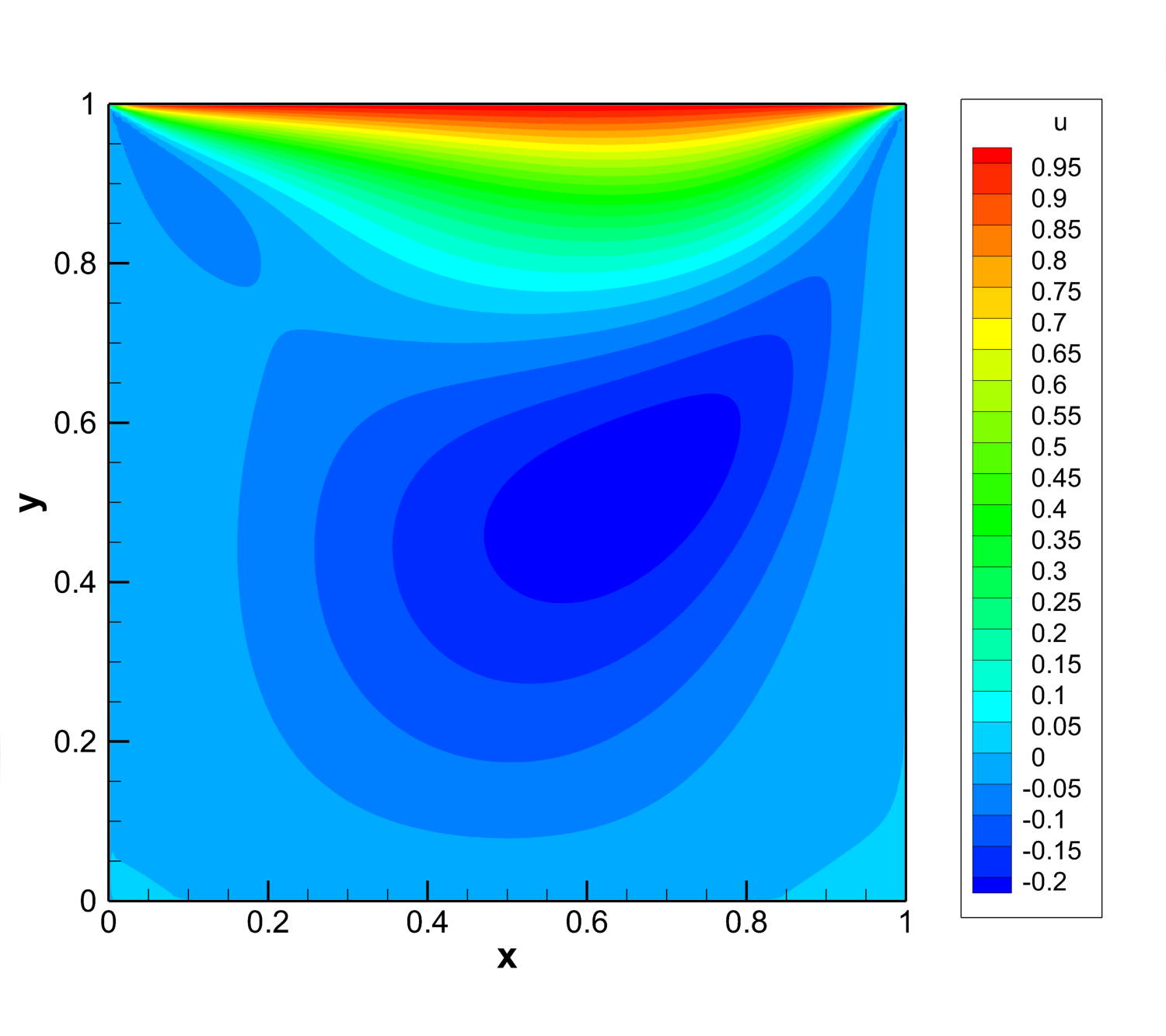}  & 
			\includegraphics[trim=20 20 20 
			20,clip,width=0.45\textwidth]{./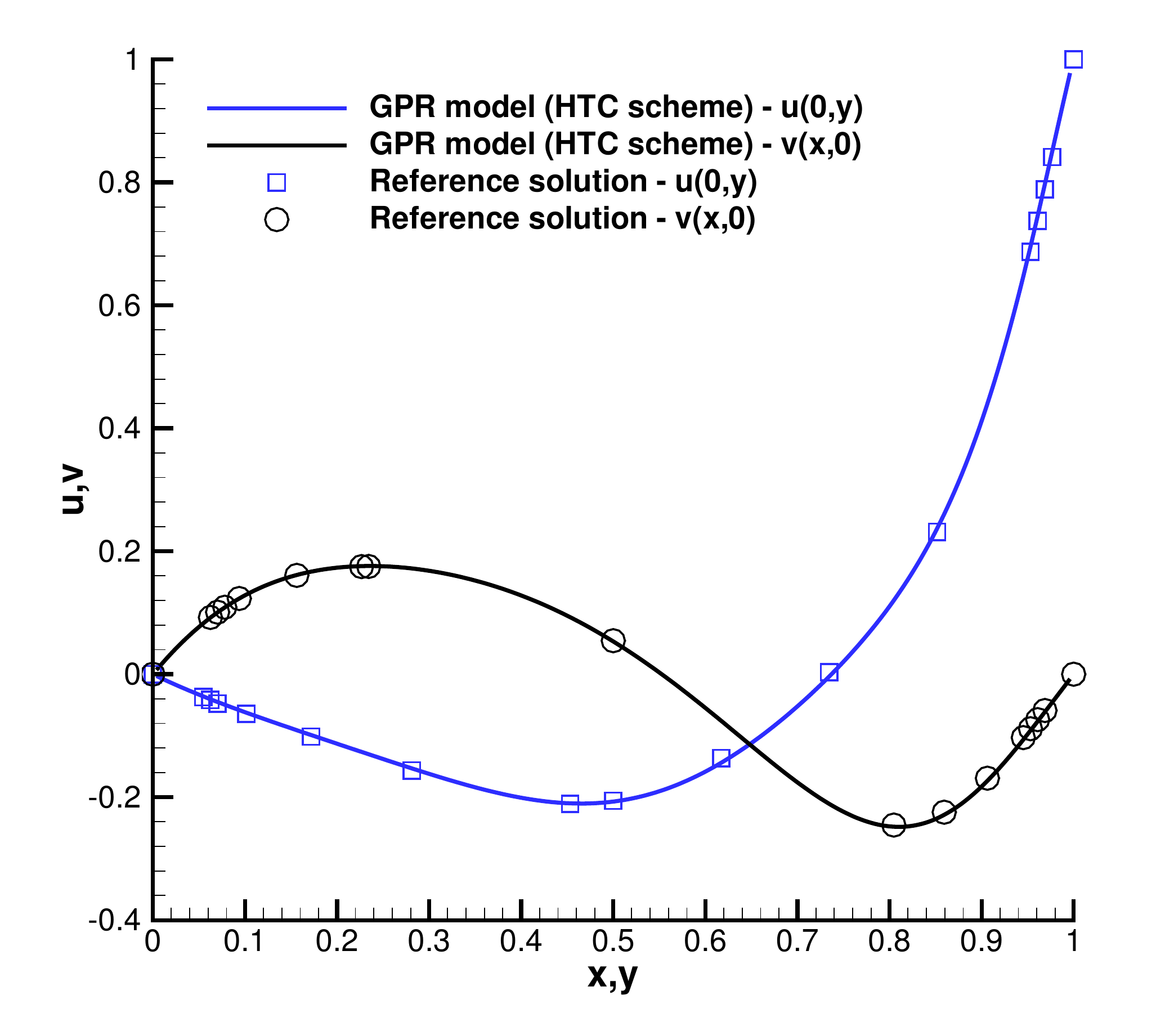}   
		\end{tabular} 
		\vspace*{-2mm}
		\caption{Lid-driven cavity at Reynolds number $Re=100$. Results obtained at time $t=10$ with the new HTC scheme applied to the GPR model. Color contours of the velocity component $v_1$ (left) and comparison of the velocity components $v_1$ and $v_2$ on 1D cuts along the $x$ and $y$ axis with the reference solution of Ghia \textit{et al.} \cite{Ghia1982} (right).} 
		\label{fig.cavity}
		\vspace*{-2mm}
	\end{center}
\end{figure}

\section{Conclusions}
\label{sec.Conclusions}

In this paper, we have presented two novel thermodynamically compatible finite volume schemes for first order hyperbolic PDE systems (HTC schemes). The first method is a semi-discrete finite volume scheme for the unified first order hyperbolic model of solid and fluid mechanics that goes back to the work of Godunov, Peshkov and Romenski on symmetric hyperbolic and thermodynamically compatible (SHTC) systems, see \cite{God1961,GodunovRomenski72,Rom1998,GodRom2003,PeshRom2014}. 
We have furthermore introduced a new fully-discrete HTC scheme for the compressible Euler 
equations, establishing a fully discrete analogy of the continuous framework introduced by Godunov 
in \cite{God1961}.  All schemes under consideration in this paper have in common that they directly 
discretize the \textit{entropy inequality} rather than the usual total energy conservation law. 
Instead, total energy conservation is obtained at the discrete level as a mere \textit{consequence} 
of a suitable and  thermodynamically compatible discretization of all the other equations. As such, 
the new schemes can be proven to be nonlinearly marginally stable in the energy norm and they 
furthermore satisfy a discrete entropy inequality \textit{by construction}. The new HTC schemes 
have been applied to several test  problems for fluid and solid mechanics, obtaining an excellent 
agreement with available reference solutions. 
\textcolor{black}{In future work, we will investigate the possible use of \textit{symplectic} time integrators in order to preserve exact total energy conservation of our new semi-discrete thermodynamically compatible scheme also on the fully discrete level.}
We also plan an extension to higher order in space at the aid of thermodynamically compatible discontinuous Galerkin (DG) finite element schemes, similar to entropy compatible DG schemes introduced in \cite{GassnerEntropyGLM,ShuEntropyMHD2,GassnerSWE} for the shallow water equations and magnetohydrodynamics (MHD), \textcolor{black}{as well as an extension to general unstructured meshes}.   
Another major challenge left to future work is the development of HTC schemes that are not only thermodynamically compatible, but which are also able to preserve the curl \textit{involution constraints} of the governing PDE system \textit{exactly} at the semi-discrete level and that are also consistent with the low Mach number limit of the equations. In this context we will consider staggered semi-implicit finite volume schemes \cite{SIGPR}, as well as staggered semi-implicit hybrid finite volume / finite element methods \cite{busto2018projection,Hybrid1,Hybrid2} and staggered DG schemes \cite{TavelliDumbser2017,SIDGConv}, which are not yet thermodynamically compatible in the sense of the HTC schemes presented in this paper. 

\section*{Acknowledgments}

S.B., M.D. and I.P. are members of the INdAM GNCS group and acknowledge the financial support received from  
the Italian Ministry of Education, University and Research (MIUR) in the frame of the Departments of Excellence  Initiative 2018--2022 attributed to DICAM of the University of Trento (grant L. 232/2016) and in the frame of the 
PRIN 2017 project \textit{Innovative numerical methods for evolutionary partial differential equations and  applications}. S.B. was also funded by INdAM via a GNCS grant for young researchers and by an \textit{UniTN starting grant} of the University of Trento. E.R., M.D. and I.P. were supported by the Mathematical Center in Akademgorodok under agreement No. 075-15-2019-1613 with the Ministry of Science and Higher Education of the Russian Federation.
\textcolor{black}{The authors would like to acknowledge support from the Leibniz Rechenzentrum (LRZ) in Garching, Germany, for granting access to the SuperMUC-NG supercomputer under project number pr63qo.}
\textcolor{black}{The authors are very grateful to the two anonymous referees for their constructive and insightful comments, which helped to improve the clarity and quality of this paper. }

\bibliographystyle{plain}
\bibliography{biblio}
\end{document}